\documentclass[english, reqno]{amsart}
\usepackage{babel}
\usepackage{amsmath,amssymb}
\usepackage{caption}
\usepackage{tikz}
\usepackage[latin1]{inputenc}
\usepackage{color}
\usepackage{hyperref}

\def\vint_#1{\mathchoice%
          {\mathop{\kern 0.2em\vrule width 0.6em height 0.69678ex depth -0.58065ex
                  \kern -0.8em \intop}\nolimits_{\kern -0.4em#1}}%
          {\mathop{\kern 0.1em\vrule width 0.5em height 0.69678ex depth -0.60387ex
                  \kern -0.6em \intop}\nolimits_{#1}}%
          {\mathop{\kern 0.1em\vrule width 0.5em height 0.69678ex
              depth -0.60387ex
                  \kern -0.6em \intop}\nolimits_{#1}}%
          {\mathop{\kern 0.1em\vrule width 0.5em height 0.69678ex depth -0.60387ex
                  \kern -0.6em \intop}\nolimits_{#1}}}
                  
                  \newcommand{\aveint}[2]{\mathchoice%
          {\mathop{\kern 0.2em\vrule width 0.6em height 0.69678ex depth -0.58065ex
                  \kern -0.8em \intop}\nolimits_{\kern -0.45em#1}^{#2}}%
          {\mathop{\kern 0.1em\vrule width 0.5em height 0.69678ex depth -0.60387ex
                  \kern -0.6em \intop}\nolimits_{#1}^{#2}}%
          {\mathop{\kern 0.1em\vrule width 0.5em height 0.69678ex depth -0.60387ex
                  \kern -0.6em \intop}\nolimits_{#1}^{#2}}%
          {\mathop{\kern 0.1em\vrule width 0.5em height 0.69678ex depth -0.60387ex
                  \kern -0.6em \intop}\nolimits_{#1}^{#2}}}

\usepackage{epsfig}

\parskip 8pt

\newcommand{\R}{\mathbb{R}}
\newcommand{\N}{\mathbb{N}}
\newcommand{\Z}{\mathbb{Z}}

\def\1{\raisebox{2pt}{\rm{$\chi$}}}

\newcommand{\abs}[1]{\left| #1 \right|}

\newcommand{\Om}{\Omega}

\newcommand{\dist}{\operatorname{dist}}

\newcommand{\M}{\mathcal{M}}

\newcommand{\tr}{{\rm tr}}

\renewcommand{\L}{\mathcal{L}}

\newcommand{\eps}{\varepsilon}

\theoremstyle{plain}
\newtheorem{definition}{Definition}[section]

\newtheorem{theorem}[definition]{Theorem}
\newtheorem*{theorem*}{Theorem}
\newtheorem{corollary}[definition]{Corollary}
\newtheorem{lemma}[definition]{Lemma}

\newtheorem{remark}[definition]{Remark}
\newtheorem{example}[definition]{Example}

\theoremstyle{definition}

\theoremstyle{remark}

\numberwithin{equation}{section}

\DeclareMathOperator*{\diam}{diam} 
\renewcommand{\L}{\mathcal{L}}
\newcommand{\set}[2]{\{#1 \; :\; #2\}} 
\newcommand{\prodin}[2]{\langle #1,#2 \rangle} 

\begin{document}

\title[Local regularity estimates]{Local regularity estimates for general discrete dynamic programming equations}

\author[Arroyo]{\'Angel Arroyo}
\address{MOMAT Research Group, Interdisciplinary Mathematics Institute, Department of Applied Mathematics and Mathematical Analysis, Universidad Complutense de Madrid, 28040 Madrid, Spain}
\email{ar.arroyo@ucm.es}

\author[Blanc]{Pablo Blanc}
\address{Department of Mathematics and Statistics, University of Jyv\"askyl\"a, PO~Box~35, FI-40014 Jyv\"askyl\"a, Finland}
\email{pablo.p.blanc@jyu.fi}

\author[Parviainen]{Mikko Parviainen}
\address{Department of Mathematics and Statistics, University of Jyv\"askyl\"a, PO~Box~35, FI-40014 Jyv\"askyl\"a, Finland}
\email{mikko.j.parviainen@jyu.fi}

\date{\today}

\keywords{ABP-estimate, elliptic non-divergence form partial differential  equation with bounded and measurable coefficients, dynamic programming principle, Harnack's inequality, local H\"older estimate, p-Laplacian, Pucci extremal operator, tug-of-war with noise} 
\subjclass[2010]{35B65, 35J15, 35J92,  91A50}

\maketitle

\begin{abstract}
We obtain an analytic proof for asymptotic H\"older estimate and Harnack's inequality for solutions to a discrete dynamic programming equation.  The results also generalize to functions satisfying Pucci-type inequalities for discrete extremal operators. Thus the results cover a quite general class of equations.
\end{abstract}


\section{Introduction}

Recently a quite general method for regularity of stochastic processes was devised in \cite{arroyobp}. It is shown that expectation  of a discrete stochastic process or equivalently a function satisfying the dynamic programming principle  (DPP)  
\begin{align}
\label{eq:dpp-intro}
u (x) =\alpha  \int_{\R^N} u(x+\eps z) \,d\nu_x(z)+\frac{\beta}{\abs{B_\eps}}\int_{B_\eps(x)} u(y)\,dy
+\eps^2 f(x),
\end{align}	
where $f$ is a Borel measurable bounded function and $\nu_x$ is a symmetric probability measure with rather mild conditions, is asymptotically H\"older regular. Moreover, the result generalizes to Pucci-type extremal operators and conditions of the form 
\begin{align}
\label{eq:pucci-extremals}
\mathcal L_\eps^+ u\ge -\abs{f},\quad \mathcal L_\eps^- u\le  \abs{f}, 
\end{align}
where $\mathcal L_\eps^+, \mathcal L_\eps^-$ are Pucci-type extremal operators related to operators of the form (\ref{eq:dpp-intro}) as in Definition~\ref{def:pucci}. As a consequence, the results immediately cover for example tug-of-war type stochastic games, which have been an object of a recent interest. 

The proof in \cite{arroyobp} uses probabilistic interpretation. In the PDE setting the closest counterpart would be Krylov-Safonov regularity method \cite{krylovs79}. It gives H\"older regularity of solutions and Harnack's inequality for      
elliptic equations with merely bounded and measurable coefficients.  The next natural question, and the aim of this paper, is to try to obtain an analytic proof. In the PDE setting the closest counterpart would be Trudinger's analytic proof of the Krylov-Safonov regularity result in \cite{trudinger80}. 

The H\"older estimate is obtained in Theorem \ref{Holder} (stated here in normalized balls for convenience) and it applies to (\ref{eq:dpp-intro}) by selecting $\rho=\sup\abs{f}$: 
\begin{theorem*}
There exists $\eps_0>0$ such that if $u$ satisfies $\L_\eps^+ u\ge -\rho$ and $\L_\eps^- u\le  \rho$  in $B_{2}$   where $\eps<\eps_0$, we have for suitable constants
\[
|u(x)-u(z)|\leq C\left(\sup_{B_{2}}|u|+\rho\right)\Big(|x-z|^\gamma+\eps^\gamma\Big)
\]
for every $x, z\in B_1$.
\end{theorem*}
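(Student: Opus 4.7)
The plan is to follow the analytic route to Krylov--Safonov regularity pioneered by Trudinger, but adapted to the discrete, non-local DPP setting, keeping careful track of the scale $\eps$ throughout. The three main building blocks will be a discrete Alexandrov--Bakelman--Pucci (ABP) type estimate, a critical-density / measure estimate obtained by applying ABP to $u$ minus a suitable barrier, and a weak Harnack inequality derived from them by a Calder\'on--Zygmund / stacked-cube iteration.

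The first step is to prove a discrete ABP estimate of the form
\[
\sup_{B_2}u^{-}\leq C\bigl(\eps^{\gamma}\sup|u|+\|\rho\|_{\text{disc}}\bigr),
\]
where the right-hand side is controlled by a suitable discrete $L^{n}$-type norm of $\rho$ on a contact set of a concave envelope. The idea is to consider the concave envelope $\Gamma$ of $-u^{-}$ extended by zero outside $B_2$ and use that, on the discrete contact set, the extremal inequality $\mathcal L_\eps^{-}u\leq\rho$ forces a lower bound on a discrete determinant/Hessian-type quantity; summing in dyadic $\eps$-scale cubes gives the estimate. The second step is the barrier construction: one builds a smooth radial function $\Phi$ such that $\mathcal L_\eps^{+}\Phi\geq -C$ in an annulus, $\Phi\geq 1$ on a smaller ball and $\Phi\leq 0$ outside a larger one. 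Applying ABP to $v=(u+\Phi-1)^{-}$ where $u\geq 0$ in $B_{2}$ and $\inf_{B_{1}}u\leq 1$ yields the critical density (or $L^\eps$-type) statement that $|\{u\leq M\}\cap B_{1}|\geq \delta|B_{1}|$ up to an $\eps$-error.

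Iterating this density estimate on dyadic balls/cubes via a Calder\'on--Zygmund covering, in the spirit of Trudinger, gives polynomial decay of the distribution function of $u$ and thus a weak Harnack inequality: for $u\geq 0$ with $\mathcal L_\eps^{-}u\leq\rho$ in $B_{2}$,
\[
\Bigl(\vint_{B_1}u^{p_0}\Bigr)^{1/p_0}\leq C\Bigl(\inf_{B_1}u+\rho+\eps^{\gamma}\sup|u|\Bigr)
\]
for some small $p_0>0$. Combined with the trivial local-max estimate coming from $\mathcal L_\eps^{+}u\geq -\rho$, this yields oscillation decay
\[
\osc_{B_{r}}u\leq \theta\,\osc_{B_{2r}}u+C\rho\, r^{2}+C\eps^{\gamma}\sup_{B_2}|u|
\]
with $\theta<1$, valid as long as $r\gtrsim\eps$. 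A standard geometric iteration in $r$ until $r\sim\eps$ then produces the final bound with the additive $\eps^{\gamma}$ term, once $\eps<\eps_0$ is taken small enough to absorb error terms.

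The main obstacle is the ABP estimate in the discrete, non-local setting: unlike the PDE case, $u$ need not be even continuous on the $\eps$-scale, and the extremal operators involve averages over unit vectors weighted by $\nu_x$ plus an averaged ball integral, so the convex-envelope argument must be executed on a discrete lattice and controlled by finite differences rather than Hessians. Matching this ABP output with the barrier (which itself must satisfy the discrete extremal inequality up to an $\eps^{2}$ error) is the delicate part; once those two ingredients are in place, the Calder\'on--Zygmund iteration and the oscillation argument follow their classical schemes with only bookkeeping modifications to track the $\eps^{\gamma}$ remainder.
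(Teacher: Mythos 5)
Your overall architecture (discrete ABP via a concave envelope, a radial barrier, a critical density estimate, a Calder\'on--Zygmund level-set iteration, then oscillation decay stopped at scale $\eps$) is the same skeleton the paper uses, but the two places where you wave your hands are exactly where the discreteness forces new ideas, so as written the proof has genuine gaps. The most serious one is the claim that the Calder\'on--Zygmund iteration ``follows its classical scheme with only bookkeeping modifications'' and yields polynomial decay of the distribution function and a weak Harnack inequality. All the measure estimates require $\eps\le\eps_0$, and rescaling to a dyadic cube of generation $\ell$ turns $\L_\eps^\pm$ into $\L_{2^\ell\eps}^\pm$, so the dyadic splitting must stop at the generation $L$ with $2^L\eps<\eps_0$; simply discarding the smaller cubes leaves an error of uncontrolled size. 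The paper handles this with a modified Calder\'on--Zygmund lemma (Lemma~\ref{CZ}) that adds a second selection criterion for cubes of generation $L$, and that criterion is verified by a separate measure estimate valid for ``relatively large'' $\eps$ (Lemma~\ref{second}), proved by iterating the DPP itself $n$ times so that the $n$-fold convolution kernel covers $Q_1$. The price is an additive error $c/K$ in the level-set estimate, so the decay of $|\{u>t\}\cap Q_1|$ is only of order $e^{-\sqrt{\log t/a}}$, not polynomial; no $L^{p_0}$ weak Harnack is obtained this way (indeed the classical Harnack inequality fails in this setting, see Example~\ref{example}), but the weaker decay still suffices for the De Giorgi oscillation lemma and hence for the H\"older estimate. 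Your sketch does not address how cubes at or below scale $\eps$ enter the covering at all, and the polynomial decay you assert is not available by this route.

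The second gap is in passing from the ABP to the critical density estimate. The $\eps$-ABP only carries information through cubes meeting the contact set $K_v$ of $v=\Psi-u$ with its concave envelope $\Gamma$; since $u$ need not be continuous, the contact set itself can be negligible, so a lower bound on the total measure of those cubes does not yet bound $|\{u\le M\}\cap Q_1|$ from below. The paper needs the complementary estimate (Lemma~\ref{estimate B_eps} and Corollary~\ref{estimate Q}) that in an $\eps/4$-ball around each contact point a fixed proportion of points satisfies $\Gamma-v\le C f(x_0)\eps^2$, which is extracted from the $\beta$-averaged term of the operator at the contact point together with concavity of $\Gamma$; your step from ``ABP applied to $(u+\Phi-1)^-$'' to the density statement silently assumes this transfer. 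Relatedly, your suggestion that the extremal inequality forces ``a lower bound on a discrete determinant/Hessian-type quantity'' points toward the classical change-of-variables proof of ABP, which the paper explicitly avoids because it is not available for discontinuous functions in this discrete setting; the usable ABP is the covering-argument version of Theorem~\ref{eps-ABP}. With these two ingredients supplied, the rest of your outline (barrier, density, oscillation iteration down to scale $\eps$) does match the paper's proof.
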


 After establishing a H\"older regularity estimate, it is natural to ask in the spirit of Krylov, Safonov and Trudinger for Harnack's inequality. To the best of our knowledge, this was not known before in our context.  The regularity techniques in PDEs or in the nonlocal setting  utilize, heuristically speaking, the fact that there is information available in all scales. Concretely, a rescaling argument is used in those contexts in arbitrary small cubes.  In our case, discreteness sets limitations, and these limitations have some crucial effects. Indeed, the standard formulation of Harnack's inequality does not hold in our setting as we show by a counter example. 
Instead, we establish an asymptotic Harnack's inequality in Theorem \ref{Harnack}:
\begin{theorem*}
There exists $\eps_0>0$ such that if $u$ satisfies $\L_\eps^+ u\ge -\rho$ and $\L_\eps^- u\le  \rho$ in $B_{7}$ where $\eps<\eps_0$, we have for suitable constants
\begin{equation*}
	\sup_{B_1}u
	\leq
	 C\left(\inf_{B_1}u+\rho+\eps^{2\lambda}\sup_{B_3}u\right).
\end{equation*}
\end{theorem*}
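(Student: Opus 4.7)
The plan is to adapt Trudinger's analytic proof of the Krylov-Safonov Harnack inequality to the discrete DPP setting, splitting the estimate into a weak Harnack-type $L^{p_0}$ bound for supersolutions and a local maximum principle for subsolutions, with careful tracking of the $\eps$-asymptotic error terms. Before starting I would use invariance of the Pucci operators under adding constants to reduce to $u\ge 0$ on $B_7$, and would rely on the asymptotic H\"older estimate from the preceding theorem as a key auxiliary tool.

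For the weak Harnack step, I would show that for some small exponent $p_0>0$ and any nonnegative $u$ with $\L_\eps^- u\le\rho$ in $B_7$,
\[
\left(\frac{1}{|B_2|}\int_{B_2} u^{p_0}\,dx\right)^{1/p_0} \le C\Bigl(\inf_{B_1} u + \rho + \eps^{2\lambda}\sup_{B_3} u\Bigr).
\]
The standard route is (i) a discrete ABP/measure-of-contact-set estimate controlling a super-level set of $u$ from a single-ball contact value, (ii) a Calder\'on--Zygmund covering or stacked-cubes iteration upgrading this into a power decay of the form $|\{u>t\}\cap B_2|\le C t^{-p_0}$, and (iii) integration in $t$. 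For the local maximum principle, assuming $\L_\eps^+ u\ge -\rho$ in $B_7$, I would prove
\[
\sup_{B_1} u \le C\left(\left(\frac{1}{|B_2|}\int_{B_2} u_+^{p_0}\,dx\right)^{1/p_0} + \rho + \eps^{2\lambda}\sup_{B_3} u\right),
\]
either via a Moser-type iteration or, more directly, via the H\"older estimate: near an interior maximum there is a ball of controlled radius on which $u$ is comparable to its supremum, so a lower bound on the $L^{p_0}$ mean forces an $L^\infty$ bound. Chaining the two inequalities --- recall that $u$ satisfies both sided Pucci inequalities at once --- yields the asserted Harnack inequality.

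The main obstacle is the discrete ABP-type step. In the classical PDE proof one differentiates the concave envelope pointwise on the contact set; in our nonlocal mean-value setting no such pointwise second-order tool is available, so contact reasoning must be replaced by a mean-value argument that inherently produces $\eps$-dependent residuals. Controlling how these residuals propagate through the Calder\'on--Zygmund iteration --- so that they collapse into the single term $\eps^{2\lambda}\sup_{B_3} u$ rather than accumulating or amplifying --- is the delicate technical heart of the proof. The counterexample alluded to in the introduction confirms that some asymptotic correction is genuinely necessary, so the real task is to arrange the iteration so that the errors absorb cleanly into the right-hand side.
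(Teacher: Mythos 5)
Your proposal follows Trudinger's classical strategy --- weak Harnack in $L^{p_0}$ for supersolutions, local maximum principle for subsolutions, then chaining --- whereas the paper takes a genuinely different route: it combines the already-established asymptotic H\"older estimate (Theorem~\ref{Holder}), a barrier-based infimum estimate proved by inserting a power barrier $\Psi(x)=A|x-z|^{-2\sigma}-B$ into the $\eps$-ABP machinery (Theorem~\ref{thm.cond2} and Corollary~\ref{coro:cond2}), and a dyadic chaining lemma (Lemma~\ref{lemma:apuja}, adapted from Luiro--Parviainen--Saksman) which iterates at scales $R_k=2^{1-k}$ down to $\sim\kappa\eps$ and produces a contradiction against $\sup_{B_3}u$ when $\sup_{B_1}u$ is too large. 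No $L^{p_0}$ quantity appears anywhere.

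The reason the paper does not and cannot follow your plan is that step (ii)--(iii) of your weak-Harnack outline fails. The power-decay estimate you would need, $|\{u>t\}\cap B_2|\le Ct^{-p_0}$, is not available. What the discrete ABP plus modified Calder\'on--Zygmund decomposition actually yield (Lemma~\ref{lem:main}) is
\[
|\{u>K^k\}\cap Q_1|\le\frac{c}{(1-\mu)K}+\mu^k,
\]
and optimizing over $K$ gives only $|\{u>t\}\cap Q_1|\le d\,e^{-\sqrt{\log t/a}}$ (Lemma~\ref{measure bound}). The additive error $c/K$ is forced by the extra cubes one must keep at generation $L$ (where the dyadic rescaling hits the $\eps$-floor), and it is \emph{not} proportional to $\eps$ or to $\sup_{B_3}u$; it is a fixed constant. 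Consequently $\int_1^\infty t^{p_0-1}|\{u>t\}\cap Q_1|\,dt$ diverges for every $p_0>0$, so there is no $L^{p_0}$ weak Harnack estimate to chain with a local maximum principle, and no way to make the residuals ``collapse into the single term $\eps^{2\lambda}\sup_{B_3}u$'' within your framework. The sub-power decay is enough to run the De Giorgi oscillation argument for a single well-chosen level (Lemma~\ref{DeGiorgi}) and hence obtain the H\"older estimate, but not to integrate. Getting to Harnack requires the separate barrier-and-chaining mechanism of Section 5, which bypasses any $L^{p_0}$ intermediary.
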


 Both the asymptotic H\"older estimate and Harnack's inequality are stable when passing to a limit with the scale $\eps$, and we recover the standard H\"older estimate and Harnack's inequality in the limit.

The key point in the proof is to establish the De Giorgi type oscillation estimate that roughly states the following (here written for the zero right hand side and suitable scaling for simplicity):  
Under certain assumptions if $u$ is a (sub)solution to (\ref{eq:dpp-intro}) with $u\leq 1$ in a suitable bigger ball and
\[
|B_{R}\cap \{u\leq 0\}|\geq \theta |B_R|,
\]
for some $\theta>0$, then there exist $\eta>0$ such that
\[
\sup_{B_R} u \leq 1-\eta.
\]
This is established in Lemma~\ref{DeGiorgi}. Then we can obtain asymptotic H\"older continuity by a finite iteration combined with a rough estimate in the scales below $\eps$.

It is not straightforward to interpret the probabilistic proof in \cite{arroyobp} into analytic form to obtain the proof of Lemma~\ref{DeGiorgi}. Instead, we need to devise an iteration for the level sets
\[
A=\{u\geq K^k\} \quad \text{ and }\quad B=\{u\geq K^{k-1}\}.
\]
It seems difficult to produce an estimate between the measures of $A$ and $B$ by using the standard version of the Calder\'on-Zygmund  decomposition. The equation (\ref{eq:dpp-intro}) is not infinitesimal, but if we simply drop all the cubes smaller than of scale $\eps$ in the  decompositions, we have no control on the size of the error. To treat this, we use an additional condition for selecting additional cubes of scale $\eps$. On the other hand, additional cubes should belong to the set $B$ above, so there are two competing objectives. Different nonlocal analytic arguments, Alexandrov-Bakelman-Pucci (ABP) type estimates, and suitable cut-off levels will be used.

Unfortunately, but necessarily, the additional condition produces an error term in the estimate between measures of $A$ and $B$. Nonetheless, we can accomplish the level set measure estimate in Lemma \ref{measure bound} which is sufficient to get the De Giorgi oscillation lemma.

The H\"older estimate and Harnack's inequality are key results in the theory of non-divergence form elliptic partial differential equations with  bounded and measurable coefficients. They were first obtained by Krylov and Safonov in \cite{krylovs79, krylovs80} by stochastic arguments. Later, an analytic proof for strong solutions was established by Trudinger in \cite{trudinger80}, see also \cite[Section 9]{gilbargt01}. In the case of viscosity solutions for fully nonlinear elliptic equations, the ABP estimate and Harnack's inequality were obtained by  Caffarelli \cite{caffarelli89}, also covered in \cite[{Chapters 3 and 4}]{caffarellic95}. For nonlocal equations, such results have been considered more recently for example in \cite{caffarellis09} or \cite{caffarellitu20}. In the case of fully discrete difference equations, we refer the reader to \cite{kuot90}. 

There is a classical well-known connection between the Brownian motion and the Laplace equation.  The dynamic programming principle (\ref{eq:dpp-intro}) is partly motivated by the connection of stochastic processes with the $p$-Laplace equation and other nonlinear PDEs. Our results cover (see \cite{arroyobp} for details) in particular a stochastic two player game called the tug-of-war game with noise. The tug-of-war game and its connection with the infinity Laplacian was discovered in \cite{peresssw09}. For the tug-of-war games with noise and their connection to $p$-Laplacian, see for example \cite{peress08}, \cite{manfredipr12}, \cite{blancr19} and \cite{lewicka20}.
There are several regularity methods devised for tug-of-war games with noise: in the early papers a global approach based on translation invariance was used. Interior a priori estimates were obtained in \cite{luirops13} and \cite{luirop18}. However, none of these methods seem to directly apply in the general setup of this paper. In this setup, we refer to probabilistic approaches in \cite{arroyobp} and with additional distortion bounds in \cite{arroyop20}.

\tableofcontents

\section{Preliminaries}
\label{preliminaries}

Let  $\Lambda\geq 1$, $\eps>0$, $\beta\in (0,1]$ and $\alpha=1-\beta$.
Constants may depend on $\Lambda$, $\alpha$, $\beta$ and the dimension $N$. 
Further dependencies are specified later. 

Throughout the article $\Omega \subset \mathbb{R}^N$ denotes a bounded domain, and $B_r(x)=\{y\in\R^N:|x-y|<r\}$ as well as $B_r=B_r(0)$.
We use $\N$ to denote the set of positive integers.
We define an extended domain as follows
\begin{equation*}
		\widetilde\Omega_{\Lambda\varepsilon}
		:\,=
		\set{x\in\R^n}{\dist(x,\Omega)<\Lambda\varepsilon}.
\end{equation*}
We further denote
\[
\int u(x)\,dx=\int_{\R^N} u(x)\,dx
\quad
\text{ and }
\quad
\vint_A u(x)\,dx=\frac{1}{|A|}\int_{A} u(x)\,dx.
\]
Moreover,
\[
\|f\|_{L^N(\Omega)}=\left(\int_\Omega |f(x)|^N\,dx\right)^{1/N}
\]
and
\[
\|f\|_{L^\infty(\Omega)}=\sup_\Omega |f|.
\]
When no confusion arises we just simply denote $\|\cdot\|_N$ and $\|\cdot\|_\infty$, respectively.

For  $x=(x_1,\ldots,x_n)\in\R^N$ and $r>0$, we define $Q_r(x)$ the open cube of side-length $r$ and center $x$ with faces parallel to the coordinate hyperplanes. In other words,
\begin{equation*}
	Q_r(x)
	:\,=
	\{y\in\R^N\,:\,|y_i-x_i|<r/2,\ i=1,\ldots,n\}.
\end{equation*}
In addition, if $Q=Q_r(x)$ and $\ell>0$, we denote $\ell Q=Q_{\ell r}(x)$.

Let $\M(B_\Lambda)$ denote the set of symmetric unit Radon measures with support in $B_\Lambda$ and $\nu:\R^N\to \M(B_\Lambda)$
such that
\begin{equation}\label{measurable-nu}
x\longmapsto\int u(x+z) \,d\nu_x(z)
\end{equation}
defines a Borel measurable function for every Borel measurable  $u:\R^N\to \R$.
By symmetric, we mean
\begin{align*}
\nu_x(E)=\nu_x(-E).
\end{align*}
for every measurable set $E\subset\R^N$.

It is worth remarking that the hypothesis \eqref{measurable-nu} on Borel measurability holds, for example, when the $\nu_x$'s are the pushforward of a given probability measure $\mu$ in $\R^N$. More precisely, if there exists a Borel measurable function $h:\R^N\times \R^N\to B_\Lambda$ such that 
\[
\nu_x=h(x,\cdot)\#\mu
\]
for each $x$, then
\[
\begin{split}
v(x)
&=\int u(x+z) \,d\nu_x(z)\\
&=\int u(x+h(x,y)) \,d\mu(y)\\
\end{split}
\]
is measurable by Fubini's theorem.

We consider here solutions to the Dynamic Programming Principle (DPP) given by
\begin{align*}
u (x) =\alpha  \int u(x+\eps v) \,d\nu_x(v)+\beta\vint_{B_\eps(x)} u(y)\,dy+\eps^2 f(x).
\end{align*}

\begin{definition}
\label{def:solutions}
We say that a bounded Borel measurable function $u$ is a subsolution to the DPP if it satisfies
\[
u (x)\leq\alpha  \int u(x+\eps z) \,d\nu_x(z)+\beta\vint_{B_\eps(x)} u(y)\,dy+\eps^2 f(x)
\]
in $\Omega$.
Analogously, we say that $u$ is a supersolution if the reverse inequality holds.
If the equality holds, we say that it is a solution to the DPP.
\end{definition}

If we rearrange the terms in the DPP,  we may alternatively use a notation that is closer to the difference methods.
\begin{definition}
Given a Borel measurable bounded function $u:\R^N\to \R$, we define $\L_\eps u:\R^N\to \R$ as
\[
\L_\eps u(x)=\frac{1}{\eps^2}\left(\alpha  \int u(x+\eps z) \,d\nu_x(z)+\beta\vint_{B_\eps(x)} u(y)\,dy-u(x)\right).
\]
With this notation,  $u$ is a subsolution (supersolution) if and only if $\L_\eps u+f \geq 0 (\leq 0)$.
\end{definition}

By defining 
\begin{align}
\label{eq:delta}
\delta u(x,y):\,=u(x+y)+u(x-y)-2u(x),
\end{align}
and recalling the symmetry condition on $\nu_x$ we can rewrite
\begin{equation*}
\L_\eps u(x)=\frac{1}{2\eps^2}\left(\alpha  \int \delta u(x,\eps z) \,d\nu_x(z)+\beta\vint_{B_1} \delta u(x,\eps y)\,dy\right).
\end{equation*}

Our theorems actually hold for functions merely satisfying Pucci-type inequali-
ties. 

\begin{definition}
\label{def:pucci}
Let $u:\R^N\to\R$ be a bounded Borel measurable function. We define the extremal Pucci type operators
\begin{equation}\label{L-eps+}
\begin{split}
	\L_\eps^+ u(x)
	:\,=
	~&
	\frac{1}{2\eps^2}\bigg(\alpha \sup_{\nu\in \M(B_\Lambda)} \int \delta u(x,\eps z) \,d\nu(z)	+\beta\vint_{B_1} \delta u(x,\eps y)\,dy\bigg)
	\\
	=
	~&
	\frac{1}{2\eps^2}\bigg(\alpha \sup_{z\in B_\Lambda} \delta u(x,\eps z) +\beta\vint_{B_1} \delta u(x,\eps y)\,dy\bigg)
\end{split}
\end{equation}
and
\begin{equation}\label{L-eps-}
\begin{split}
	\L_\eps^- u(x)
	:\,=
	~&
	\frac{1}{2\eps^2}\bigg(\alpha \inf_{\nu\in \M(B_\Lambda)} \int \delta u(x,\eps z) \,d\nu(z)	+\beta\vint_{B_1} \delta u(x,\eps y)\,dy\bigg)
	\\
	=
	~&
	\frac{1}{2\eps^2}\bigg(\alpha \inf_{z\in B_\Lambda} \delta u(x,\eps z) +\beta\vint_{B_1} \delta u(x,\eps y)\,dy\bigg),
\end{split}
\end{equation}
where $\delta u(x,\eps y)=u(x+\eps y)+u(x-\eps y)-2u(x)$ for every $y\in B_\Lambda$.
\end{definition}

More generally we can consider functions that satisfy 
\begin{equation*}
\L_\eps^+ u\ge -\rho,\quad \L_\eps^- u\le  \rho.
\end{equation*}
If we omit the notation above $\L_\eps^- u\le  \rho$ reads as
\begin{align*}
u (x) 
	&
	\geq
	\alpha\inf_{\nu\in \M(B_\Lambda)}  \int u(x+\eps v) \,d\nu (v)+\beta\vint_{B_\eps(x)} u(y)\,dy-\eps^2\rho.
\end{align*}

\sloppy
Observe that the natural counterpart for the Pucci operator
$P^+(D^2u)=\sup_{I \leq A\leq \Lambda I}\tr(AD^2u)$
is given by
\begin{align}
\label{eq:about-extremal-operators}
	P_\eps^+ u(x)
	:\,=
	\frac{1}{2\eps^2}\sup_{I\leq A\leq \Lambda I}\vint_{B_1} \delta u(x,\eps A y)\,dy.
\end{align}
Our operator is extremal in the sense that we have $\L_{\eps}^+ u\geq P_\eps^+ u$ for $\beta=\frac{1}{\Lambda^N}.$

In many places we consider $u$ defined in the whole $\R^N$ but only for expository reasons: we need always have the function defined in a larger set than where the equation is given so that the integrands in the operators are defined; this we always assume. 

The existence of solutions to the  DPP can be seen by Perron's method. 
For the uniqueness in \cite{arroyobp} we employed the connection to a stochastic process.
Here we give a pure analytic proof of the uniqueness.

\begin{lemma}[Existence and uniqueness]
There exists a unique solution to the DPP with given boundary values.
\end{lemma}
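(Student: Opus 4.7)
For existence I rely on the standard Perron method for discrete monotone equations. Define $T$ by
\[
Tu(x):=\alpha\int u(x+\eps z)\,d\nu_x(z)+\beta\vint_{B_\eps(x)}u(y)\,dy+\eps^2 f(x)
\]
on $\Omega$ and $Tu(x):=g(x)$ on $\widetilde\Omega_{\Lambda\eps}\setminus\Omega$. The hypothesis \eqref{measurable-nu} guarantees that $T$ preserves Borel measurability, and $T$ is monotone. Quadratic ordered barriers $\underline{u}(x)=-K+\gamma|x|^2$ and $\overline{u}(x)=K-\gamma|x|^2$ serve respectively as a sub- and a supersolution once $\gamma,K$ are large enough (using $\vint_{B_\eps(x)}|y|^2\,dy-|x|^2=\tfrac{N}{N+2}\eps^2$), so the monotone iterates $T^k\underline{u}$ increase to a bounded fixed point by the monotone convergence theorem.

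Now I turn to uniqueness, which I reduce to the comparison principle: if $u_1$ is a subsolution and $u_2$ a supersolution with the same $f$ and $u_1\le u_2$ on $\widetilde\Omega_{\Lambda\eps}\setminus\Omega$, then $u_1\le u_2$ on $\Omega$. Writing $w=u_1-u_2$ the source term $\eps^2 f$ drops out, so
\[
w(x)\le \alpha\int w(x+\eps z)\,d\nu_x(z)+\beta\vint_{B_\eps(x)}w(y)\,dy\qquad\text{on }\Omega,
\]
with $w\le 0$ on $\widetilde\Omega_{\Lambda\eps}\setminus\Omega$.

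The crux is to perturb $w$ by a strict supersolution of the homogeneous equation. Fix $R$ with $\widetilde\Omega_{\Lambda\eps}\subset B_R$ and set $\psi(x)=R^2-|x|^2\ge 0$. Using symmetry of $\nu_x$ (so $\int z\,d\nu_x=0$) and the identity $\vint_{B_\eps(x)}|y|^2\,dy-|x|^2=\tfrac{N}{N+2}\eps^2$, a direct expansion yields the strict gap
\[
\psi(x)-\alpha\int\psi(x+\eps z)\,d\nu_x(z)-\beta\vint_{B_\eps(x)}\psi(y)\,dy
=\alpha\eps^2\!\int|z|^2\,d\nu_x+\tfrac{N}{N+2}\beta\eps^2\ge \tfrac{N}{N+2}\beta\eps^2.
\]
For any $\lambda>0$, subtracting $\lambda$ times this identity from the subsolution inequality for $w$ gives
\[
(w-\lambda\psi)(x)\le \alpha\!\int(w-\lambda\psi)(x+\eps z)\,d\nu_x+\beta\vint_{B_\eps(x)}\!(w-\lambda\psi)\,dy-\lambda\tfrac{N}{N+2}\beta\eps^2.
\]
Suppose toward contradiction that $M':=\sup_\Omega(w-\lambda\psi)>0$. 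Since $w\le 0$ and $\lambda\psi\ge 0$ on the exterior layer, $w-\lambda\psi\le 0<M'$ outside $\Omega$, so the supremum is in fact attained on the whole extended domain, and each of the two averages on the right is bounded by $M'$. Picking $x_0\in\Omega$ with $(w-\lambda\psi)(x_0)>M'-\delta$ then forces
\[
M'-\delta<M'-\lambda\tfrac{N}{N+2}\beta\eps^2,
\]
which is impossible once $\delta<\lambda\tfrac{N}{N+2}\beta\eps^2$. Hence $w\le\lambda\psi$ on $\Omega$ for every $\lambda>0$, and letting $\lambda\to 0^+$ yields $w\le 0$ on $\Omega$.

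The main obstacle is finding a strict supersolution despite the non-infinitesimal, non-local character of $\L_\eps$. The quadratic $\psi$ succeeds precisely because its $B_\eps$-average shifts by exactly $\tfrac{N}{N+2}\eps^2$, yielding a uniform positive gap at the correct discrete scale and independent of the fine structure of $\nu_x$ (whose contribution is only favorable). A more hands-on alternative---iterating the Markov-type estimate $|B_\eps(x)\cap\{w>M-2\delta/\beta\}|\ge|B_\eps|/2$ to propagate near-maxima out to $\partial\Omega$---also works but demands extra geometric care near the boundary, which the barrier argument above avoids entirely.
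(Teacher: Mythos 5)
Your proof is correct, and the uniqueness argument takes a genuinely different route from the paper's. The paper invokes Perron's method to obtain a maximal solution $u$, and then, assuming $M=\sup_\Omega(u-v)>0$, propagates a near-maximizer $x_0$ along a chain $x_0,x_1,\ldots$ whose first coordinate strictly increases by at least $\eps/2$ at each step (the key being that a fixed fraction $A$ of $B_\eps(x_0)$ lies in $\{\pi_1>\pi_1(x_0)+\eps/2\}$); since $\Omega$ is bounded the chain exits $\Omega$ in at most $n_0=\diam(\Omega)/(\eps/2)$ steps, and choosing $\delta$ so that $M-\delta/(\beta A)^{n_0}>0$ yields a contradiction at the boundary. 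You instead prove the full sub/supersolution comparison principle by perturbing with the strict supersolution $\psi(x)=R^2-|x|^2$: the identity $\psi(x)-\alpha\int\psi(x+\eps z)\,d\nu_x-\beta\vint_{B_\eps(x)}\psi\,dy=\alpha\eps^2\int|z|^2\,d\nu_x+\tfrac{N}{N+2}\beta\eps^2$ is correct (the cross terms vanish by symmetry and $\alpha+\beta=1$), the resulting positive gap $\lambda\tfrac{N}{N+2}\beta\eps^2$ rules out a positive interior sup of $w-\lambda\psi$, and letting $\lambda\to 0^+$ gives $w\le 0$. Both arguments rely essentially on $\beta>0$; yours is the classical barrier/comparison route (concave quadratic producing a uniform strict gap under the discrete averaging operator), while the paper's is a directed propagation argument tailored to the geometry of the $\beta$-average. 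The barrier route is arguably cleaner, immediately yields comparison between arbitrary sub- and supersolutions without reference to a maximal Perron solution, and mirrors the standard PDE uniqueness proof; the paper's chain argument is more elementary in that it uses no auxiliary function, only counting and the bounded diameter of $\Omega$. One small presentational remark: the phrase ``the supremum is in fact attained on the whole extended domain'' should read ``the supremum over $\widetilde\Omega_{\Lambda\eps}$ equals $M'$''—you are not asserting it is achieved at a point, nor do you need to, since you work with a near-maximizer $x_0$.
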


\begin{proof}
As stated, the existence can be proved by Perron's method. 
Then, there is a maximal solution that we denote $u$.
Suppose that there is another solution $v$.
We have $v\leq u$ and our goal is to show that equality holds.

We define
\[
M=\sup_{x\in\Omega}u(x)-v(x)
\]
and assume, for the sake of contradiction, that $M>0$.
We define
\[
A
=\frac{|\{y\in B_\eps(x): \pi_1(y)>\pi_1(x)+\eps/2\}|}{\abs{B_\eps}}
=\frac{|\{y\in B_1: \pi_1(y)>1/2\}|}{\abs{B_1}}
\]
where $\pi_1$ stands for the projection in the first coordinate.

Given $\delta>0$ we consider $x_0\in \Omega$ such that $u(x_0)-v(x_0)>M-\delta$.
We have
\[
\begin{split}
M-\delta
&<u(x_0)-v(x_0)\\
&=\alpha  \int u(x_0+\eps z)-v(x_0+\eps z) \,d\nu_{x_0}(z)+\beta\vint_{B_\eps(x_0)} u(y)-v(y)\,dy\\
&<\alpha  M +\beta (1-A) M+\beta A \vint_{\{y\in B_\eps(x_0): \pi_1(y)>\pi_1(x_0)+\eps/2\}} u(y)-v(y)\,dy.
\end{split}
\]
Simplifying we obtain
\[
M-\frac{\delta}{\beta A}
<\vint_{\{y\in B_\eps(x_0): \pi_1(y)>\pi_1(x_0)+\eps/2\}} u(y)-v(y)\,dy.
\]
Then, there exists $x_1\in \{y\in B_\eps(x_0): \pi_1(y)>\pi_1(x_0)+\eps/2\}$ such that
\[
M-\frac{\delta}{\beta A}<u(x_1)-v(x_1).
\]
Inductively, given $x_{k-1}\in \Omega$ we construct $x_k$ such that $M-\frac{\delta}{(\beta A)^k}<u(x_k)-v(x_k)$ and $\pi_1(x_k)>\pi_1(x_0)+k\eps/2$.
Since $\Omega$ is bounded and the first coordinate increases in at least $\eps/2$ in every step, there exists a first $n$ such that $x_n\not\in \Omega$.
Observe that $n\leq n_0=\frac{\diam(\Omega)}{\eps/2}$, therefore for $\delta$ small enough such that $M-\frac{\delta}{(\beta A)^{n_0}}>0$ we have reached a contradiction. 
In fact, we have
\[
0
< M-\frac{\delta}{(\beta A)^{n_0}}
\leq M-\frac{\delta}{(\beta A)^{n}}
\leq  u(x_n)-v(x_n)
\]
and $u(x_n)=v(x_n)$ since $x_n\not\in\Omega$.
\end{proof}


\subsection{Examples and connection to PDEs}

In this section, we recall some examples from \cite{arroyobp} alongside other ones, all of which are covered by our results.
First, we comment  about passage to the limit with the step size $\eps$ where the connection to PDEs arises.

We consider $\phi\in C^2(\Omega)$, and  use the second order Taylor's expansion of $\phi$ to obtain
\begin{equation*}
	\lim_{\eps\to 0}\L_\eps\phi(x)
	=
	\mathrm{Tr}\{D^2\phi(x)\, A(x)\},
\end{equation*}
where 
\begin{equation*}
	A(x)
	:\,=
	\frac{\alpha}{2}\int z\otimes z\,d\nu_x(z)+\frac{\beta}{2(N+2)}\, I.
\end{equation*}
Above $a\otimes b$ stands for the tensor product of vectors $a,b\in\R^n$, that is, the matrix with entries $(a_ib_j)_{ij}$.
See Example 2.3 in \cite{arroyobp} for the details.

We have obtained a linear second order partial differential operator.
Furthermore, for $\beta\in(0,1]$, the operator is uniformly elliptic: given $\xi\in\R^N\setminus\{0\}$,
we can estimate
\begin{equation*}
	\frac{\beta}{2(N+2)}
	\leq
	\frac{\langle A(x) \xi,\xi\rangle}{|\xi|^2}
	\leq
	\frac{\alpha\Lambda^2}{2}+\frac{\beta}{2(N+2)}.
\end{equation*}
Roughly speaking, in the DPP (\ref{eq:dpp-intro}), the fact that $\beta$ is strictly positive corresponds to the concept of uniform ellipticity in PDEs. In stochastic terms, there is always certain level of diffusion to each direction.

It also holds, using Theorem~\ref{Holder} (cf.\ \cite[Theorem 4.9]{manfredipr12}), that under suitable regularity assumptions, the solutions $u_{\eps}$  to the DPP converge to a viscosity solution $v\in C(\Om)$ of 
\begin{align*}
\mathrm{Tr}\{D^2 v(x)\,A(x)\}=f(x),
\end{align*}
as $\eps\to 0$. This is obtained through the asymptotic Arzel\`a-Ascoli theorem \cite[Lemma 4.2]{manfredipr12}.

Moreover, by passing to the limit under suitable uniqueness considerations we obtain that
the results in this paper imply the corresponding regularity for the solutions to the limiting PDEs.
That is we obtain that the limit functions are H\"older continuous and verify the classical Harnack inequality, see
Remark \ref{harnack:limit}.

The extremal inequalities (\ref{eq:pucci-extremals}) cover a wide class of discrete operators, comparable to the uniformly elliptic operators in PDEs covered by the Pucci extremal operators, see for example \cite{caffarellic95}.
Also recall (\ref{eq:about-extremal-operators}) where we commented on this connection.

\begin{example}
Our result  applies to solutions of the nonlinear DPP given by
\[
u(x)=\alpha \sup_{\nu\in B_\Lambda} \frac{u(x+\eps \nu)+u(x-\eps \nu)}{2}
+\beta\vint_{B_1} u(x+\eps y)\,dy.
\]
In \cite{brustadlm20} a control problem associated to the nonlinear example is presented and, in the limit as $\eps \to 0$, a local PDE involving the dominative $p$-Laplacian operator arises.

Heuristically, the above DPP can be understood by considering a value $u$ at $x$, which can be computed by summing up different outcomes with corresponding probabilities: either a maximizing controller who gets to choose $\nu$ wins (probability $\alpha$), or a random step occurs (with probability $\beta$) within a ball of radius $\varepsilon$. If the controller wins, the position moves to $x+\eps \nu$ (with probability $1/2$) or to $x-\eps \nu$ (with probability $1/2$). 
\end{example}

\begin{example}
Motivation for this article partly arises from tug-of-war games.
In particular, the tug-of-war with noise associated to the DPP 
\begin{align}
\label{eq:p-dpp}
u(x)=\frac{\alpha}{2}\left( \sup_{B_\eps(x)} u + \inf_{B_ \eps(x) } u\right)+\beta \vint_{B_\eps(x)} u(z) dz + \eps^2 f(x).
\end{align}
was introduced in \cite{manfredipr12}.
This can be rewritten as 
\begin{align*}
\frac{1}{2\eps^2}\left(\alpha\left( \sup_{B_\eps(x)} u + \inf_{B_ \eps(x) } u	-2u(x)\right)+\beta\vint_{B_1} \delta u(x,\eps y)\,dy\right)+f(x)=0.
\end{align*}
Since 
\[
\sup_{B_\eps(x)} u + \inf_{B_\eps(x)} u \leq \sup_{z\in B_1}\big(  u(x+\eps z) +u(x-\eps z)\big)
\]
we have $0\leq f+ \L^+_\eps u$ and similarly   $0\ge f+ \L^-_\eps u$.
Therefore solutions to \eqref{eq:p-dpp} satisfy (\ref{eq:pucci-extremals}), and our results apply to these functions.
As a limit one obtains the $p$-Laplacian problem with  $2<p<\infty$.
See Example 2.4 in \cite{arroyobp} for other DPPs related to the $p$-Laplacian.
\end{example}

\begin{example}
Consider a stochastic process where a particle jumps to a point in an ellipsoid $\eps E_x$ uniformly at random ($B_1\subset E_x\subset B_\Lambda$), see \cite{arroyop20}.
Such a process is associated to the DPP
\[
u(x)
=
\vint_{E_x} u(x+\eps y)\,dy.
\]
That DPP is covered by our results, see Example 2.7 in \cite{arroyobp}.
Such mean value property has been studied in connection with smooth solutions to PDEs in \cite{puccit76} by Pucci and Talenti.
\end{example}

\begin{example}
 Also Isaacs type dynamic programming principle 
\[
u(x)=\alpha \sup_{V \in \mathcal V}\inf_{\nu \in V}  \frac{u(x+\eps \nu)+u(x-\eps \nu)}{2}
+\beta\vint_{B_1} u(x+\eps y)\,dy,
\]
with  $\mathcal V \subset\mathcal P(B_\Lambda)$, a subset of the power set, and $\beta>0$ can be mentioned as an example.
In particular, if we consider
\[
\mathcal V =\{\pi\cap B_\Lambda: \text{$\pi$ is an hyperplane of dimension $k$}\}
\]
 we obtain 
 $$ 
 \lambda_k (D^2u) +C \Delta u = f, 
 $$
 as a limiting PDE, where 
 $$
 \lambda_k (D^2 u) = \inf_{dim (V) = k} \sup_{v \in V} \langle D^2u \, v, v \rangle
 $$
 is the $k-$th eigenvalue of $D^2u$, see also \cite{blancr19b}.

\end{example}

 The applicability of the results in this article is by no means limited to these examples, but rather they apply to many kind of fully nonlinear uniformly elliptic PDEs.

\section{Measure estimates}

One of the key ingredients in the proof of H\"older regularity is the measure estimate Lemma~\ref{first}. To prove it, we need an $\eps$-ABP estimate Theorem~\ref{eps-ABP}, an estimate for the difference between $u$ and its concave envelope Corollary~\ref{estimate Q}, as well as a suitable barrier functions Lemma~\ref{barrier}.

\subsection{The $\eps$-ABP estimate}

Next we recall a version of the ABP estimate. The discrete nature of our setting forces us to consider non-continuous subsolutions of the DPP, so the corresponding concave envelope $\Gamma$ might not be $C^{1,1}$ as in the classical setting. Moreover, in this setting it is not easy to use the change of variables formula for the integral to prove the ABP.
In our previous work \cite{arroyobp}, the ABP estimate (Theorem~\ref{eps-ABP} below) is adapted to the discrete {$\eps$-setting} following an argument by Caffarelli and Silvestre (\cite{caffarellis09}) for nonlocal equations.  The idea is to use a covering argument on the contact set (where $u$ coincides with $\Gamma$) to estimate the oscillation of $\Gamma$. It is also interesting to note that one can recover the classical ABP estimate by taking limits as $\eps\to 0$.

However, the $\eps$-ABP estimate as stated in \cite{arroyobp} turns out to be insufficient to establish the preliminary measure estimates needed in our proof of H\"older regularity. To deal with this inconvenience, and since the $\eps$-ABP exhibits certain independence of the behavior of $u$ outside the contact set, we need to complement the $\eps$-ABP estimate with an estimate (in measure) of the difference between the subsolution $u$ and its concave envelope $\Gamma$ (Lemma ~\ref{estimate B_eps}) in a neighborhood of any contact point.

Given $\eps>0$, we denote by $\mathcal{Q}_\eps(\R^N)$ a grid of open cubes of diameter $\eps/4$ covering $\R^N$ up to a measure zero. Take
\begin{equation*}
	\mathcal{Q}_\eps(\R^N)
	:\,=
	\set{Q=Q_{\frac{\eps}{4\sqrt{N}}}(x)}{x\in\frac{\eps}{4\sqrt{N}}\,\Z^N}.
\end{equation*}
In addition, if $A\subset\R^N$ we write
\begin{equation*}
	\mathcal{Q}_\eps(A)
	:\,=
	\set{Q\in\mathcal{Q}_\eps(\R^N)}{\overline Q\cap A\neq\emptyset}.
\end{equation*}

In order to obtain the measure estimates, given a bounded Borel measurable function $u$ satisfying the conditions in Theorem~\ref{eps-ABP}, we define the concave envelope of  $u^+=\max\{u,0\}$ in $B_{2\sqrt{N}+\Lambda\eps}$ as the function
\begin{equation*}
	\Gamma(x)
	:\,=
	\begin{cases}
	\inf\set{\ell(x)}{\text{for all hyperplanes } \ell\geq u^+ \text{ in } B_{2\sqrt{N}+\Lambda\eps}} & \text{ if } |x|<2\sqrt{N}+\Lambda\eps,
	\\
	0 & \text{ if } |x|\geq 2\sqrt{N}+\Lambda\eps.
	\end{cases}
\end{equation*}
Moreover, we define the superdifferential of $\Gamma$ at $x$ as the set of vectors
\begin{equation*}
	\nabla\Gamma(x)
	:\,=\set{\xi\in\R^N}{\Gamma(z)\leq\Gamma(x)+\prodin{\xi}{z-x}\ \text{ for all }\ |z|<2\sqrt{N}+\Lambda\eps}.
\end{equation*}
Since $\Gamma$ is concave, then $\nabla\Gamma(x)\neq\emptyset$ for every $|x|<2\sqrt{N}+\Lambda\eps$.

In addition, we define the contact set $K_u\subset\overline B_{2\sqrt{N}}$ as the set of points where $u$ and $\Gamma$ `agree':
\begin{equation*}
	K_u
	:\,=
	\set{|x|\leq 2\sqrt{N}}{\limsup_{y\to x}u(y)=\Gamma(x)}.
\end{equation*}
We remark that the set $K_u$ is compact. 
Indeed, $K_u$ is bounded and since $u\leq\Gamma$, the set of points where the equality is attained is given by $\limsup_{y\to x}u(y)-\Gamma(x)\geq 0$ and it is closed because $\limsup_{y\to x}u(y)-\Gamma(x)$ is upper semicontinuous.

Now we are in conditions of stating the $\eps$-ABP estimate, whose proof can be found in \cite[Theorem 4.1]{arroyobp} (see also Remark 7.4 in the same reference).

\begin{theorem}[$\eps$-ABP estimate]\label{eps-ABP}
Let $f\in C(\overline B_{2\sqrt{N}})$ and suppose that $u$ is a bounded Borel measurable function satisfying
\begin{equation*}
	\begin{cases}
	\L_\eps^+u+f\geq 0 & \text{ in } B_{2\sqrt{N}},
	\\
	u\leq 0 & \text{ in } \R^N\setminus B_{2\sqrt{N}},
	\end{cases}
\end{equation*}
where $\L_\eps^+u$ was defined in (\ref{L-eps+}).
Then
\begin{equation*}
	\sup_{B_{2\sqrt{N}}}u
	\leq
	C\bigg(\sum_{Q\in\mathcal{Q}_\eps(K_u)}(\sup_Qf^+)^N|Q|\bigg)^{1/N},
\end{equation*}
where $C>0$ is a constant independent of $\eps$.
\end{theorem}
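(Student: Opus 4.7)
The plan is to adapt the classical Alexandrov--Bakelman--Pucci argument to the discrete/nonlocal setting, following the strategy of Caffarelli--Silvestre. I would first reduce the statement to two measure estimates: the standard Alexandrov geometric bound
\[
\bigl(\sup_{B_{2\sqrt N}}u\bigr)^N \leq C\,|\nabla\Gamma(K_u)|,
\]
and a covering bound
\[
|\nabla\Gamma(K_u)| \leq C\sum_{Q\in\mathcal{Q}_\eps(K_u)}(\sup_Q f^+)^N|Q|.
\]
The first bound is classical: since $\Gamma$ is concave, attains $M:=\sup_{B_{2\sqrt N}}u$ at some interior point, and vanishes outside $B_{2\sqrt N+\Lambda\eps}$, the image of the superdifferential covers a ball of radius $\sim M$; any realized supporting slope can be traced to a touching point with $u^+$ by sliding the hyperplane, which shows that $\nabla\Gamma$ of the whole ball and of $K_u$ coincide up to a null set.

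For the second bound, by countable additivity it suffices to prove the local estimate $|\nabla\Gamma(Q\cap K_u)|\leq C(\sup_Q f^+)^N|Q|$ for each $Q\in\mathcal{Q}_\eps(K_u)$. Pick a contact point $x_0\in\overline Q\cap K_u$ (approximating along a sequence $x_k\to x_0$ with $u(x_k)\to\Gamma(x_0)$ if necessary, since $u$ need not be upper semicontinuous) and a supporting slope $p_0\in\nabla\Gamma(x_0)$. The key observation is that at the contact point $\delta u(x_0,\cdot)\leq \delta\Gamma(x_0,\cdot)\leq 0$ by $u\leq\Gamma$ and the concavity of $\Gamma$, so in $\L_\eps^+u(x_0)+f(x_0)\geq 0$ both the supremum and the averaged summand are individually nonpositive. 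Isolating the averaged part yields
\[
-\vint_{B_1}\delta\Gamma(x_0,\eps y)\,dy\leq\frac{2\eps^2}{\beta}f^+(x_0).
\]
Rewriting this using the nonnegative convex defect $h(z):=\Gamma(x_0)+\langle p_0,z-x_0\rangle-\Gamma(z)$ and the symmetry of $B_1$ (which kills the linear term), I obtain the averaged bound $\vint_{B_\eps(x_0)}h(z)\,dz\leq C\eps^2 f^+(x_0)$.

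The central technical step, and in my view the main obstacle, is to upgrade this $L^1$-type average on $h$ to a pointwise bound on the oscillation of $\nabla\Gamma$ on $Q$. For $x\in\overline Q\cap K_u$ (so $|x-x_0|\leq\eps/4$) and any $\xi\in\nabla\Gamma(x)$, the vector $p_0-\xi$ lies in the convex subdifferential of $h$ at $x$; combined with $h\geq 0$ this forces $h(z)\geq \langle p_0-\xi,z-x\rangle$ for all $z$. A direct geometric argument (taking a slab inside $B_\eps(x_0)$ on which $\langle p_0-\xi,z-x\rangle\gtrsim\eps|\xi-p_0|$ and noting that its Lebesgue measure is $\gtrsim\eps^N$) then shows $\vint_{B_\eps(x_0)}h\,dz\gtrsim\eps|\xi-p_0|$, so comparison with the averaged bound gives $|\xi-p_0|\leq C\eps\sup_Q f^+$. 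Consequently $\nabla\Gamma(Q\cap K_u)\subset B_{C\eps\sup_Q f^+}(p_0)$, and since $|Q|\sim\eps^N$ this yields the required local estimate; summing over $Q\in\mathcal{Q}_\eps(K_u)$ concludes the proof. This upgrade is the discrete/nonsmooth analogue of the classical observation that for a concave function an $L^\infty$ bound on $\Delta\Gamma$ automatically upgrades to an $L^\infty$ bound on $D^2\Gamma$; because $\Gamma$ need not be $C^{1,1}$ and the equation is genuinely nonlocal, it must be carried out through the purely geometric convexity/subdifferential argument above rather than infinitesimally.
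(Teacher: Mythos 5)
Your proposal is correct and follows essentially the same route as the proof this paper relies on: Theorem~\ref{eps-ABP} is not reproved here but cited from \cite[Theorem 4.1]{arroyobp}, whose argument is exactly this Caffarelli--Silvestre scheme (an Alexandrov bound through the superdifferential image of the contact set, plus a per-cube estimate of $\nabla\Gamma$ at scale $\eps$ extracted from the equation at limsup-contact points, handling the possible discontinuity of $u$ by approximating sequences just as you indicate). The only loose spot is the parenthetical claim that $\nabla\Gamma$ of the whole ball and of $K_u$ coincide up to a null set; what the sliding argument actually yields (and all that is needed) is that every slope of modulus smaller than $c\sup_{B_{2\sqrt N}}u$ is attained at some point of $K_u$, because for such slopes the touching point has $\Gamma>0$, hence lies in $\overline B_{2\sqrt{N}}$ and is a genuine contact point of $u$ rather than merely of $u^+$.
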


All relevant information of $u$ in the proof of the $\eps$-ABP estimate turns out to be transferred to its concave envelope $\Gamma$ in the contact set $K_u$, while the behavior of $u$ outside $K_u$ does not play any role in the estimate. Therefore, in order to control the behavior of $u$ in $B_{2\sqrt{N}}$, in the next result we show that $u$ stays sufficiently close to its concave envelope in a large enough portion of the $\eps$-neighborhood of any contact point $x_0\in K_u$. It is also worth remarking that the result can be regarded as a refinement of Lemma 4.4 in \cite{arroyobp}, the main difference being the possible discontinuities that $u$ might present.

\begin{lemma}\label{estimate B_eps}
Under the assumptions of Theorem~\ref{eps-ABP}, let $x_0\in K_u$. Then for every $C>0$ large enough there exists $c>0$ such that
\begin{equation*}
	|B_{\eps/4}(x_0)\cap\{\Gamma-u\leq Cf(x_0)\eps^2\}|
	\geq
	c\eps^N.
\end{equation*}
\end{lemma}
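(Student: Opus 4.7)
The strategy is to derive a pointwise integral inequality for the nonnegative defect $g := \Gamma - u$ from $\L_\eps^+ u + f \geq 0$ and the concavity of $\Gamma$, use the $\limsup$ definition of $K_u$ to pass to the limit at $x_0$, and finish by Chebyshev. Concretely, the goal is to show that for every $x \in B_{2\sqrt{N}}$ one has
\[
\beta \vint_{B_1} g(x+\eps y)\,dy \;\leq\; g(x) + \eps^2 f(x). \qquad (\star)
\]
To derive $(\star)$, pick $\xi \in \nabla\Gamma(x)$ (nonempty since $x$ lies in the interior of the concavity domain $B_{2\sqrt{N}+\Lambda\eps}$); the superdifferential bound $\Gamma(x \pm \eps y) \leq \Gamma(x) \pm \eps\langle\xi,y\rangle$ summed over the two signs gives $\Gamma(x+\eps y) + \Gamma(x-\eps y) \leq 2\Gamma(x)$ for any $y \in B_\Lambda$. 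Writing $u = \Gamma - g$ and subtracting $2u(x)$ yields
\[
\delta u(x,\eps y) \;\leq\; 2g(x) - g(x+\eps y) - g(x-\eps y),
\]
so $\sup_{z \in B_\Lambda}\delta u(x,\eps z) \leq 2g(x)$ (using $g \geq 0$), and by symmetry of $B_1$, $\vint_{B_1}\delta u(x,\eps y)\,dy \leq 2g(x) - 2\vint_{B_1} g(x+\eps y)\,dy$. Plugging into $2\eps^2(\L_\eps^+ u(x) + f(x)) \geq 0$ and using $\alpha + \beta = 1$ gives $(\star)$.

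\textbf{Contact step and Chebyshev.} By the $\limsup$ definition of $K_u$ there is a sequence $x_n \to x_0$ with $u(x_n) \to \Gamma(x_0)$; continuity of $\Gamma$ (concave on an open set) and of $f$ then give $g(x_n) \to 0$ and $f(x_n) \to f(x_0)$. Applying $(\star)$ at $x_n$ in integral form, using that $B_{\eps/4}(x_0) \subset B_\eps(x_n)$ as soon as $|x_n - x_0| < 3\eps/4$ and that $g \geq 0$, and letting $n \to \infty$, I obtain
\[
\beta \int_{B_{\eps/4}(x_0)} g(y)\,dy \;\leq\; |B_\eps|\,\eps^2 f(x_0).
\]
(When $f(x_0) \leq 0$, this forces $g \equiv 0$ a.e.\ on $B_{\eps/4}(x_0)$ and the conclusion is immediate.) For $f(x_0) > 0$, Chebyshev yields
\[
\bigl|B_{\eps/4}(x_0) \cap \{g > C f(x_0)\eps^2\}\bigr| \;\leq\; \frac{|B_\eps|}{\beta C},
\]
and choosing $C \geq 2\cdot 4^N/\beta$ makes the right-hand side at most $\tfrac{1}{2}|B_{\eps/4}|$, so the complementary set has measure at least $c\eps^N$ with $c = |B_1|/(2\cdot 4^N)$.

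\textbf{Main obstacle.} The crux is the possible discontinuity of $u$ at $x_0$: one may well have $g(x_0) > 0$ even though $\limsup_{y\to x_0}u(y) = \Gamma(x_0)$, which would render $(\star)$ applied at $x_0$ itself useless. The $\limsup$ characterization of $K_u$ supplies a recovery sequence along which $g$ does tend to zero, and this is precisely why the measure estimate is stated on the smaller ball $B_{\eps/4}(x_0)$ rather than $B_\eps(x_0)$: one needs some room for the approximating points $x_n$ so that $B_{\eps/4}(x_0)$ still lies inside $B_\eps(x_n)$ once $x_n$ is close enough to $x_0$. A secondary check is that the superdifferential argument remains valid along the approximating sequence, which is automatic because $x_n\pm\eps y$ and $x_n\pm\eps z$ stay inside $B_{2\sqrt{N}+\Lambda\eps}$ for $n$ sufficiently large.
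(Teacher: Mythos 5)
Your proof is correct and follows the same overall strategy as the paper's: use the recovery sequence $x_n\to x_0$ from the $\limsup$ definition of $K_u$, exploit concavity of $\Gamma$ to kill the $\alpha$-term and control the $\beta$-term, pass to the limit, and finish with Chebyshev on $B_{\eps/4}(x_0)$. The one genuine (though modest) difference is in how the limit is taken: the paper passes to the limit termwise in the DPP and needs the $L^1$-continuity of translations to conclude $\vint_{B_\eps}(u(x_n+y)-u(x_0+y))\,dy\to 0$, whereas you first package everything into the pointwise integral inequality $(\star)$ for the nonnegative defect $g=\Gamma-u$, and then use the monotone inclusion $B_{\eps/4}(x_0)\subset B_\eps(x_n)$ together with $g\geq 0$ to drop the dependence on $x_n$ before taking limits. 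This sidesteps the translation-continuity step entirely and is a clean simplification. One further cosmetic point: you invoke $\xi\in\nabla\Gamma(x)$ only to re-derive $\delta\Gamma\leq 0$, which already follows directly from concavity of $\Gamma$; the paper, by contrast, does use $\xi$ substantively, since it works with $\Phi(y)=\Gamma(x_0)-u(x_0+y)+\prodin{\xi}{y}$ and only at the end converts back to $\Gamma-u$ via the superdifferential inequality, an intermediate layer your version does not need. Your explicit handling of the boundary case $f(x_0)\leq 0$ is a welcome addition that the paper leaves implicit.
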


\begin{proof}
By the definition of the set $K_u$, given $x_0\in K_u$ there exists a sequence $\{x_n\}_n$ of points in $\overline B_{2\sqrt{N}}$ converging to $x_0$ such that
\begin{equation*}
	\Gamma(x_0)
	=
	\lim_{n\to\infty}u(x_n).
\end{equation*}
Recall the notation $\delta u(x_n,y):\,=u(x_n+y)+u(x_n-y)-2u(x_n)$. Then, since $u\leq\Gamma$,
\begin{equation*}
\begin{split}
	\delta u(x_n,y)
	\leq
	~&
	\delta\Gamma(x_n,y)+2[\Gamma(x_n)-u(x_n)]
	\\
	\leq
	~&
	2[\Gamma(x_n)-u(x_n)],
\end{split}
\end{equation*}
for every $y$, where the concavity of $\Gamma$ has been used in the second inequality. In particular, 
\begin{equation*}
	\sup_{z\in B_\Lambda}\delta u(x_n,\eps z)
	\leq
	2[\Gamma(x_n)-u(x_n)]
	\longrightarrow
	0
\end{equation*}
as $n\to\infty$.
On the other hand,
\begin{equation*}
\begin{split}
	\frac{1}{2}\vint_{B_1}\delta u(x_n,\eps y)\,dy
	=
	~&
	\vint_{B_\eps}(u(x_n+y)-u(x_n))\,dy
	\\
	=
	~&
	\vint_{B_\eps}(u(x_0+y)-\Gamma(x_0))\,dy
	\\
	~&
	+\Gamma(x_0)-u(x_n)+\vint_{B_\eps}(u(x_n+y)-u(x_0+y))\,dy,
\end{split}
\end{equation*}
and taking limits
\begin{equation*}
	\lim_{n\to\infty}\frac{1}{2}\vint_{B_1}\delta u(x_n,\eps y)\,dy
	=
	\vint_{B_\eps}(u(x_0+y)-\Gamma(x_0))\,dy.
\end{equation*}
Replacing in the expression for $\L_\eps^+u(x_n)$ we get
\begin{equation*}
	\eps^2\liminf_{n\to\infty}\L_\eps^+u(x_n)
	\leq
	\beta\vint_{B_\eps}(u(x_0+y)-\Gamma(x_0))\,dy.
\end{equation*}
Since $\L_\eps^+u+f\geq 0$ by assumption with continuous $f$, we obtain
\begin{equation*}
\begin{split}
	\frac{f(x_0)\eps^2}{\beta}
	\geq
	~&
	\vint_{B_\eps}(\Gamma(x_0)-u(x_0+y))\,dy
	\\
	=
	~&
	\vint_{B_\eps}(\Gamma(x_0)-u(x_0+y)+\prodin{\xi}{y})\,dy,
\end{split}
\end{equation*}
for every vector $\xi\in\R^N$, where the equality holds because of the symmetry of $B_\eps$. Since $\nabla\Gamma(x_0)\neq\emptyset$ by the concavity of $\Gamma$, we can fix $\xi\in\nabla\Gamma(x_0)$.

Next we split $B_\eps$ in two sets: $B_\eps\cap\{\Phi\leq Cf(x_0)\eps^2\}$ and $B_\eps\cap\{\Phi>Cf(x_0)\eps^2\}$, where we have denoted
\begin{equation*}
	\Phi(y)
	:\,=
	\Gamma(x_0)-u(x_0+y)+\prodin{\xi}{y}
\end{equation*}
for every $y\in B_\eps$ for simplicity, and we study the integral of $\Phi$ over both subsets. 

First, since $u\leq\Gamma$ and $\xi\in\nabla\Gamma(x_0)$ we have that
\begin{equation*}
	\Phi(y)
	\geq
	\Gamma(x_0)-\Gamma(x_0+y)+\prodin{\xi}{y}
	\geq
	0
\end{equation*}
for every $y\in B_\eps$, so we can estimate
\begin{equation*}
\begin{split}
	\int_{B_\eps\cap\{\Phi\geq Cf(x_0)\eps^2\}}\Phi(y)\,dy
	\geq
	0.
\end{split}
\end{equation*}
On the other hand,
\begin{equation*}
	\int_{B_\eps\cap\{\Phi>Cf(x_0)\eps^2\}}\Phi(y)\,dy
	>
	|B_\eps\cap\{\Phi>Cf(x_0)\eps^2\}|Cf(x_0)\eps^2.
\end{equation*}
Summarizing, we have proven that
\begin{equation*}
	\frac{f(x_0)\eps^2}{\beta}
	>
	\frac{|B_\eps\cap\{\Phi>Cf(x_0)\eps^2\}|}{|B_\eps|}Cf(x_0)\eps^2,
\end{equation*}
so
\begin{equation*}
	|B_{\eps/4}\cap\{\Phi>Cf(x_0)\eps^2\}|
	\leq
	|B_\eps\cap\{\Phi>Cf(x_0)\eps^2\}|
	<
	\frac{|B_\eps|}{C\beta}
	=
	\frac{4^N}{C\beta}|B_{\eps/4}|.
\end{equation*}
Therefore,
\begin{equation*}
	|B_{\eps/4}\cap\{\Phi\leq Cf(x_0)\eps^2\}|
	\geq
	|B_{\eps/4}|\left(1-\frac{4^N}{C\beta}\right)
	=
	c\eps^N.
\end{equation*}
Finally, replacing $\Phi$, and since $\Gamma(x_0+y)\leq\Gamma(x_0)+\prodin{\xi}{y}$ for every $y\in B_{\eps/4}$ and $\xi\in\nabla\Gamma(x_0)$, we can estimate
\begin{equation*}
\begin{split}
	c\eps^N
	\leq
	~&
	|\set{y\in B_{\eps/4}}{\Gamma(x_0)-u(x_0+y)+\prodin{\xi}{y}\leq Cf(x_0)\eps^2}|
	\\
	\leq
	~&
	|\set{y\in B_{\eps/4}}{\Gamma(x_0+y)-u(x_0+y)\leq Cf(x_0)\eps^2}|
	\\
	=
	~&
	\big|B_{\eps/4}(x_0) \cap \{\Gamma-u\leq Cf(x_0)\eps^2\}\big|,
\end{split}
\end{equation*}
so the proof is finished.
\end{proof}

We obtain the same estimate in each cube $Q\in\mathcal{Q}_\eps(K_u)$ immediately as a corollary of the previous lemma.

\begin{corollary}\label{estimate Q}
Under the assumptions of Theorem~\ref{eps-ABP}, there exists $c>0$ such that
\begin{equation*}
	\big|3\sqrt{N}\,Q \cap \{\Gamma-u\leq C(\sup_Qf)\eps^2\}\big|
	\geq
	c|Q|
\end{equation*}
for each $Q\in\mathcal{Q}_\eps(K_u)$.
\end{corollary}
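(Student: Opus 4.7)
The proof should be a short direct consequence of Lemma~\ref{estimate B_eps}, obtained by relating a contact point in $\overline{Q}$ to the enlarged cube $3\sqrt{N}\,Q$.

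First, I would pick a contact point: since $Q\in\mathcal{Q}_\eps(K_u)$, by definition $\overline{Q}\cap K_u\neq\emptyset$, so there exists some $x_0\in\overline{Q}\cap K_u$. I would then apply Lemma~\ref{estimate B_eps} at this point $x_0$ to produce a constant $c>0$ such that
\begin{equation*}
    \big|B_{\eps/4}(x_0)\cap\{\Gamma-u\leq Cf(x_0)\eps^2\}\big|\ \geq\ c\,\eps^N.
\end{equation*}

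The next step is to verify the geometric inclusion $B_{\eps/4}(x_0)\subset 3\sqrt{N}\,Q$. By definition $Q$ has side length $\eps/(4\sqrt{N})$, hence diameter $\eps/4$ and half side length $\eps/(8\sqrt{N})$; in particular, if $x$ denotes the center of $Q$, then $|x_0-x|\leq\eps/8$ since $x_0\in\overline{Q}$. Moreover $3\sqrt{N}\,Q=Q_{3\eps/4}(x)$, which contains every point whose $\ell^\infty$-distance from $x$ is at most $3\eps/8$. For any $y\in B_{\eps/4}(x_0)$,
\begin{equation*}
    |y-x|_\infty\ \leq\ |y-x|\ \leq\ |y-x_0|+|x_0-x|\ <\ \tfrac{\eps}{4}+\tfrac{\eps}{8}\ =\ \tfrac{3\eps}{8},
\end{equation*}
so indeed $B_{\eps/4}(x_0)\subset 3\sqrt{N}\,Q$.

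Combining these two facts, together with $f(x_0)\leq\sup_Q f$ (which follows from the continuity of $f$ and $x_0\in\overline{Q}$), yields
\begin{equation*}
    \big|3\sqrt{N}\,Q\cap\{\Gamma-u\leq C(\sup_Q f)\eps^2\}\big|\ \geq\ \big|B_{\eps/4}(x_0)\cap\{\Gamma-u\leq Cf(x_0)\eps^2\}\big|\ \geq\ c\,\eps^N.
\end{equation*}
Since $|Q|=(\eps/(4\sqrt{N}))^N$ is a fixed dimensional multiple of $\eps^N$, rewriting $c\,\eps^N$ as $c'|Q|$ for a new constant $c'>0$ finishes the argument. There is essentially no obstacle beyond the bookkeeping of constants; the only point requiring care is confirming that the ball $B_{\eps/4}(x_0)$ centered at a point of $\overline{Q}$ is actually contained in the enlarged cube $3\sqrt{N}\,Q$, which is exactly why the enlargement factor $3\sqrt{N}$ appears in the statement.
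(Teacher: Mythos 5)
Your proof is correct and matches the paper's argument: pick a contact point $x_0\in\overline Q\cap K_u$, apply Lemma~\ref{estimate B_eps} at $x_0$, verify the inclusion $B_{\eps/4}(x_0)\subset 3\sqrt N\,Q$, and convert $\eps^N$ into $|Q|$ via the dimensional constant $\eps^N=(4\sqrt N)^N|Q|$. The only cosmetic difference is that the paper routes the inclusion through the intermediate ball $B_{\frac32\diam Q}(x_Q)$ rather than bounding the $\ell^\infty$-distance directly, but the content is identical.
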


\begin{proof}
Let $Q\in\mathcal{Q}_\eps(K_u)$. Then there is $x_0\in \overline{Q}\cap K_u$. On the other hand,  since $\diam Q=\eps/4$, if we denote by $x_Q$ the center of $Q$, we get that $|x_Q-x_0|\leq\diam Q/2$ and
\begin{equation*}
	B_{\eps/4}(x_0)
	=
	B_{\diam Q}(x_0)
	\subset
	B_{\frac{3}{2}\diam Q}(x_Q)
	\subset
	3\sqrt{N}\,Q.
\end{equation*}
Hence, by Lemma~\ref{estimate B_eps}, using this inclusion and recalling that $\eps^N=(4\sqrt{N})^N|Q|$ we complete the proof.
\end{proof}

\subsection{A barrier function for $\L_\eps^-$}

Another ingredient needed in  the proof of the measure estimate Lemma~\ref{first} is a construction of a barrier for the minimal Pucci-type operator defined in \eqref{L-eps-}. To that end, we prove the following technical inequality for real numbers.

\begin{lemma}
Let $\sigma>0$. If $a,b>0$ and $c\in\R$ such that $|c|<a+b$ then
\begin{multline}\label{ineq:abc}
	(a+b+c)^{-\sigma}+(a+b-c)^{-\sigma}-2a^{-\sigma}
	\\
	\geq
	2\sigma a^{-\sigma-1}\left[-b+\frac{\sigma+1}{2}\left(1-(\sigma+2)\frac{b}{a}\right)\frac{c^2}{a}\right].
\end{multline}
\end{lemma}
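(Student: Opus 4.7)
The plan is to treat the left-hand side as a function of $c$ with $a,b$ fixed, apply Taylor's formula with integral remainder, and then exploit convexity of certain powers to lower bound each piece against $a^{-\sigma-1}$ and $a^{-\sigma-2}$ respectively.

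Set $g(t):=t^{-\sigma}$ for $t>0$, so that $g'(t)=-\sigma t^{-\sigma-1}$, $g''(t)=\sigma(\sigma+1)t^{-\sigma-2}$, and note that $g''$ is itself convex since $g^{(4)}(t)=\sigma(\sigma+1)(\sigma+2)(\sigma+3)t^{-\sigma-4}>0$. Define
\[
\phi(c):=g(a+b+c)+g(a+b-c)-2g(a),
\]
which is well defined on $(-(a+b),a+b)$ and satisfies $\phi'(0)=0$. The second-order Taylor formula with integral remainder then gives
\[
\phi(c)=2[g(a+b)-g(a)]+\int_0^c(c-s)\bigl[g''(a+b+s)+g''(a+b-s)\bigr]\,ds.
\]

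Next I would lower-bound the two pieces separately. For the remainder, convexity of $g''$ gives $g''(a+b+s)+g''(a+b-s)\ge 2g''(a+b)$ for every admissible $s$, so the integral is at least $c^2\,g''(a+b)$ (by symmetry of $\phi$ we may assume $c\ge 0$). For the first piece, monotonicity of $g'$ (i.e.\ convexity of $g$) yields
\[
g(a+b)-g(a)=\int_0^b g'(a+t)\,dt \ge b\,g'(a)=-\sigma b\,a^{-\sigma-1}.
\]
Combining these,
\[
\phi(c)\ge -2\sigma b\,a^{-\sigma-1}+c^2\,g''(a+b).
\]

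The last step is to bring the factor $g''(a+b)=\sigma(\sigma+1)(a+b)^{-\sigma-2}$ into the form required by the right-hand side of \eqref{ineq:abc}. For this I would use the tangent-line bound
\[
(1+x)^{-\sigma-2}\ge 1-(\sigma+2)x\qquad (x>-1),
\]
which follows from convexity of $x\mapsto(1+x)^{-\sigma-2}$, applied with $x=b/a$. This gives $g''(a+b)\ge \sigma(\sigma+1)a^{-\sigma-2}\bigl(1-(\sigma+2)b/a\bigr)$, and factoring out $2\sigma a^{-\sigma-1}$ produces exactly the claimed inequality.

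The only real subtlety is sign bookkeeping: the factor $1-(\sigma+2)b/a$ may be negative, but this is harmless because it multiplies the nonnegative quantity $c^2/a$, so the inequality $g''(a+b)\ge \sigma(\sigma+1)a^{-\sigma-2}(1-(\sigma+2)b/a)$ can freely be substituted into the lower bound without reversing any signs. Everything else is routine.
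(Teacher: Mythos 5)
Your proof is correct and follows essentially the same route as the paper: a quadratic lower bound for $g(a+b+c)+g(a+b-c)-2g(a+b)$ with $g(t)=t^{-\sigma}$, followed by the tangent-line (convexity) bounds $(a+b)^{-\sigma}\geq a^{-\sigma}(1-\sigma b/a)$ and $(a+b)^{-\sigma-2}\geq a^{-\sigma-2}\bigl(1-(\sigma+2)b/a\bigr)$, with the correct observation that the possibly negative factor is multiplied by $c^2\geq 0$. The only difference is cosmetic: you justify the quadratic bound via the integral-remainder Taylor formula and convexity of $g''$, while the paper uses a fourth-order Taylor expansion with a positive error term.
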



\begin{proof}
The inequality
\begin{equation*}
	(t+h)^{-\sigma}+(t-h)^{-\sigma}-2t^{-\sigma}
	\geq
	\sigma(\sigma+1)t^{-\sigma-2}h^2
\end{equation*}
holds for every $0<|h|<t$.
This can be seen by considering the Taylor expansion in $h$ of the LHS with error of order 4 and bound the error since it is positive.

 Then replacing $t=a+b$ and $h=c$ we obtain that
\begin{equation*}
	(a+b+c)^{-\sigma}+(a+b-c)^{-\sigma}
	\geq
	2(a+b)^{-\sigma}+\sigma(\sigma+1)(a+b)^{-\sigma-2}c^2.
\end{equation*}
Moreover, by using convexity we can estimate
\begin{equation*}
	(a+b)^{-\sigma}
	\geq
	a^{-\sigma}-\sigma a^{-\sigma-1}b
	=
	a^{-\sigma}\left(1-\sigma\frac{b}{a}\right),
\end{equation*}
and similarly
\begin{equation*}
	(a+b)^{-\sigma-2}
	\geq
	a^{-\sigma-2}\left(1-(\sigma+2)\frac{b}{a}\right).
\end{equation*}

Using these inequalities and rearranging terms we get
\begin{multline*}
	(a+b+c)^{-\sigma}+(a+b-c)^{-\sigma}-2a^{-\sigma}
	\\
\begin{split}
	\geq
	~&
	2a^{-\sigma}\left(1-\sigma\frac{b}{a}\right)+\sigma(\sigma+1)a^{-\sigma-2}\left(1-(\sigma+2)\frac{b}{a}\right)c^2-2a^{-\sigma}
	\\
	=
	~&
	2\sigma a^{-\sigma-1}\left[-b+\frac{\sigma+1}{2}\left(1-(\sigma+2)\frac{b}{a}\right)\frac{c^2}{a}\right],
\end{split}
\end{multline*}
and the proof is concluded.
\end{proof}

Next we construct a suitable barrier function. The importance of this function, which will be clarified later, lies in the fact that, when added to a a subsolution $u$, its shape ensures that the contact set is localized in a fixed neighborhood of the origin. Recall the  notation $\L_\eps^-$ from \eqref{L-eps-}.

\begin{lemma}\label{barrier}
There exists a smooth function $\Psi:\R^N\to\R$ and $\eps_0>0$ such that
\begin{equation*}
	\begin{cases}
	\L_\eps^-\Psi+\psi\geq 0 & \text{ in } \R^N,
	\\
	\Psi\geq 2 & \text{ in } Q_3,
	\\
	\Psi\leq 0 & \text{ in } \R^N\setminus B_{2\sqrt{N}},
	\end{cases}
\end{equation*}
for every $0<\eps\leq\eps_0$, where $\psi:\R^N\to\R$ is a smooth function such that 
\begin{equation*}
	\psi\leq\psi(0) \text{ in } \R^N
	\qquad\text{ and }\qquad
	\psi\leq 0 \text{ in } \R^N\setminus B_{1/4}.
\end{equation*}
\end{lemma}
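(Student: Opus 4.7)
The plan is a Krylov--Safonov style radial barrier. Take
\[
\Psi(x)=M\bigl(\eta(|x|)-(2\sqrt{N})^{-\sigma}\bigr),
\]
where $\sigma>0$ is a large exponent, $\eta\in C^\infty([0,\infty))$ is a non-increasing profile that is constant on $[0,r_0/2]$ and equals $r^{-\sigma}$ on $[r_0,\infty)$ for some small $r_0\in(0,1/8)$, and $M>0$ is chosen so that $\Psi\geq 2$ on $\overline{Q_3}$. The exponent $\sigma$ is tuned so that the ``true'' singular profile $\Psi_0(x):=|x|^{-\sigma}$ is an $\L_\eps^-$-supersolution on $\{|x|\geq r_0\}$; the auxiliary $\psi$ is then a smooth plateau bump inside $B_{1/4}$ whose role is to absorb the bounded error caused by smoothing $\Psi$ near the origin.

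\textbf{Core estimate for $\Psi_0$.} Setting $a=|x|$ and decomposing $|x\pm\eps z|=a+b(z)\pm c(z)$ with $c(z)=\tfrac12(|x+\eps z|-|x-\eps z|)$ and $b(z)=\tfrac12(|x+\eps z|+|x-\eps z|)-a\geq 0$, the preceding lemma applied pointwise yields
\[
\delta\Psi_0(x,\eps z)\geq 2\sigma a^{-\sigma-1}\Bigl[-b(z)+\tfrac{\sigma+1}{2}\bigl(1-(\sigma+2)b(z)/a\bigr)c(z)^2/a\Bigr].
\]
A second order Taylor expansion of $|x\pm\eps z|=\sqrt{a^2\pm 2\eps\langle x,z\rangle+\eps^2|z|^2}$ gives $c(z)=\eps\langle\hat x,z\rangle+O(\eps^3/a^2)$ and $b(z)=\eps^2(|z|^2-\langle\hat x,z\rangle^2)/(2a)+O(\eps^4/a^3)$. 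The infimum over $z\in B_\Lambda$, attained to leading order in the purely tangential direction of maximal length, is therefore $\geq-\sigma\Lambda^2\eps^2 a^{-\sigma-2}+o(\eps^2)$, while the average over $B_1$, using $\vint_{B_1}\langle\hat x,y\rangle^2\,dy=1/(N+2)$ and $\vint_{B_1}|y|^2\,dy=N/(N+2)$, is $\geq \sigma(\sigma-N+2)\eps^2/((N+2)a^{\sigma+2})+o(\eps^2)$. Weighting by $\alpha,\beta$,
\[
2\eps^2\L_\eps^-\Psi_0(x)\geq \sigma a^{-\sigma-2}\eps^2\Bigl[\frac{\beta(\sigma-N+2)}{N+2}-\alpha\Lambda^2\Bigr]+o(\eps^2),
\]
and choosing $\sigma$ large enough that the bracket is strictly positive (and then $\eps_0$ small enough, depending on $\sigma,r_0,\Lambda$, so that the remainder is absorbed) gives $\L_\eps^-\Psi_0\geq 0$ on $\{|x|\geq r_0\}$.

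\textbf{Assembling $\Psi$ and $\psi$.} With $\sigma$ fixed, pick $M$ so that $M((3\sqrt{N}/2)^{-\sigma}-(2\sqrt{N})^{-\sigma})\geq 2$; since $Q_3\subset\overline{B_{3\sqrt{N}/2}}$ and $\eta$ is non-increasing, this forces $\Psi\geq 2$ on $\overline{Q_3}$, and the outer condition $\Psi\leq 0$ on $\R^N\setminus B_{2\sqrt{N}}$ is immediate from the additive constant. Shrink $\eps_0$ further so that $r_0+\Lambda\eps_0<1/8$: then for $|x|\geq 1/8$ all points $x\pm\eps z$ with $z\in B_\Lambda$ lie in $\{|y|\geq r_0\}$, so $\Psi$ coincides with $M(\Psi_0-(2\sqrt{N})^{-\sigma})$ in the relevant $\eps$-neighborhood and the core estimate gives $\L_\eps^-\Psi(x)\geq 0$. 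On the remaining region $\{|x|<1/8\}\subset B_{1/4}$, $\Psi$ is smooth on a fixed compact set, hence $\|D^2\Psi\|_\infty\leq C_0$ uniformly and a second order Taylor expansion of $\delta\Psi$ gives a uniform lower bound $\L_\eps^-\Psi(x)\geq -C_1$ with $C_1$ depending only on $N,\Lambda,\alpha,\beta,\sigma,M$. Finally take $\psi$ smooth, radial, equal to $C_1$ on $\overline{B_{1/8}}$, smoothly decreasing to $0$ on the annulus $1/8\leq|x|\leq 1/4$, and identically zero outside $B_{1/4}$: then $\psi(0)=C_1$ is the global maximum, $\psi\leq 0$ outside $B_{1/4}$, and the three cases $|x|<1/8$, $1/8\leq|x|<1/4$ (where both $\L_\eps^-\Psi\geq 0$ and $\psi\geq 0$), and $|x|\geq 1/4$ together verify $\L_\eps^-\Psi+\psi\geq 0$ on all of $\R^N$. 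The technically delicate step is the core estimate: the preceding lemma only provides a useful bound when $(\sigma+2)b/a<1$, which forces $\eps$ to be small relative to $|x|$, and the main quantitative point is to choose $\sigma$ large enough that the averaged positive contribution $\beta(\sigma-N+2)/(N+2)$ genuinely dominates the negative extremal term $\alpha\Lambda^2$ uniformly on $\{|x|\geq r_0\}$; everything else is bookkeeping.
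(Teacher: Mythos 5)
Your proof is correct and reaches the same conclusion, but it takes a different route than the paper. The paper chooses the globally smooth profile $\Psi(x) = A(1+|x|^2)^{-\sigma} - B$, so that after rotating to $x=(|x|,0,\dots,0)$ the identity $1+|x\pm\eps y|^2 = (1+|x|^2) + \eps^2|y|^2 \pm 2\eps|x| y_1$ matches the algebraic inequality \emph{exactly} with $a = 1+|x|^2$, $b = \eps^2|y|^2$, $c = 2\eps|x| y_1$ --- all polynomial in $\eps$ --- so there is no Taylor remainder to absorb, no singularity at the origin, and the function $\psi$ appears automatically as (minus) the explicit lower bound for $\L_\eps^-\Psi$, after which $\psi(0) = A\sigma\Lambda^2$ and $\psi\leq 0$ off $B_{1/4}$ are read off directly. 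Your choice $\Psi_0 = |x|^{-\sigma}$ with a smooth cap near the origin is the classical Krylov--Safonov barrier; it is valid, but the decomposition $|x\pm\eps z| = a + b(z)\pm c(z)$ with $a=|x|$ gives $b,c$ that are not polynomial in $\eps$, so you must Taylor-expand them and then absorb the $o(\eps^2)$ remainder uniformly on $\{|x|\geq r_0\}$ (noting the remainders scale like $\eps^3 a^{-\sigma-3}$ and hence are dominated by the leading $\eps^2 a^{-\sigma-2}$ term for $\eps/r_0$ small), and you also need a separate compactness bound for $\L_\eps^-\Psi$ near the origin plus a hand-built plateau bump for $\psi$. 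Both approaches hinge on the same algebraic lemma and the same threshold $\sigma$ large; the paper's $(1+|x|^2)^{-\sigma}$ simply eliminates the bookkeeping you correctly identify as the technically delicate part of your argument.
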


\begin{proof}
The proof is constructive. Let $\sigma>0$ to be fixed later and define
\begin{equation*}
	\Psi(x)
	=
	A(1+|x|^2)^{-\sigma}-B
\end{equation*}
for each $x\in\R^N$, where $A,B>0$ are chosen such that
\begin{equation*}
	\Psi(x)
	=
	\begin{cases}
	2 & \text{ if } |x|=\frac{3}{2}\sqrt{N},
	\\
	0 & \text{ if } |x|=2\sqrt{N}.
	\end{cases}
\end{equation*}
Then $\Psi\leq 0$ in $\R^N\setminus B_{2\sqrt{N}}$ and $\Psi\geq 2$ in $Q_3\subset B_{3/2\sqrt{N}}$. We show that $\Psi$ satisfies the remaining condition for a suitable choice of the exponent $\sigma$ independently of $\eps$.

Since $\Psi$ is radial, we can assume without loss of generality that $x=(|x|,0,\ldots,0)$. Then
\begin{equation*}
	\Psi(x+\eps y)
	=
	A(1+|x+\eps y|^2)^{-\sigma}-B
	=
	A(1+|x|^2+\eps^2|y|^2+2\eps|x|y_1)^{-\sigma}-B
\end{equation*}
for every $y\in\R^N$. Thus, recalling \eqref{ineq:abc} with $a=1+|x|^2$, $b=\eps^2|y|^2$ and $c=2\eps|x|y_1$ we obtain that
\begin{equation*}
\begin{split}
	\delta\Psi(x,\eps y)
	&
	=
	\Psi(x+\eps y)+\Psi(x-\eps y)-2\Psi(x)
	\\
	&
	\geq
	2\eps^2A\sigma (1+|x|^2)^{-\sigma-1}\left[-|y|^2+2(\sigma+1)\left(1-(\sigma+2)\frac{\eps^2|y|^2}{1+|x|^2}\right)\frac{|x|^2}{1+|x|^2}y_1^2\right]
	\\
	&
	\geq
	2\eps^2A\sigma (1+|x|^2)^{-\sigma-1}\left[-\Lambda^2+2(\sigma+1)(1-(\sigma+2)\Lambda^2\eps^2)\frac{|x|^2}{1+|x|^2}y_1^2\right]
\end{split}
\end{equation*}
for every $|y|<\Lambda$.

Fix $\eps_0=\eps_0(\Lambda,\sigma)$ such that
\begin{equation*}
	\eps_0
	\leq
	\frac{1}{\Lambda\sqrt{2(\sigma+2)}},
\end{equation*}
so
\begin{equation*}
	\delta\Psi(x,\eps y)
	\geq
	2\eps^2A\sigma (1+|x|^2)^{-\sigma-1}\left[-\Lambda^2+(\sigma+1)\frac{|x|^2}{1+|x|^2}y_1^2\right]
\end{equation*}
for every $|y|<\Lambda$ and $0<\eps\leq\eps_0$.
In consequence we can estimate
\begin{equation*}
	\inf_{z\in B_\Lambda}\delta\Psi(x,\eps z)
	\geq
	2\eps^2A\sigma (1+|x|^2)^{-\sigma-1}\left[-\Lambda^2\right]
\end{equation*}
and
\begin{equation*}
	\vint_{B_1}\delta\Psi(x,\eps y)\,dy
	\geq
	2\eps^2A\sigma (1+|x|^2)^{-\sigma-1}\left[-\Lambda^2+\frac{\sigma+1}{N+2}\cdot\frac{|x|^2}{1+|x|^2}\right],
\end{equation*}
where we have used that $\vint_{B_1}y_1^2\,dy=\frac{1}{N+2}$.
Replacing these inequalities in the definition of $\L_\eps^-\Psi(x)$, \eqref{L-eps-}, we obtain
\begin{equation*}
	\L_\eps^-\Psi(x)
	\geq
	A\sigma (1+|x|^2)^{-\sigma-1}\left[-\Lambda^2+\beta\frac{\sigma+1}{N+2}\cdot\frac{|x|^2}{1+|x|^2}\right]
	=\,:
	-\psi(x)
\end{equation*}
for every $x\in\R^N$ and $0<\eps\leq\eps_0$. It is easy to check that $\psi(x)\leq\psi(0)=A\sigma\Lambda^2$ for every $x\in\R^N$. Moreover
\begin{equation*}
	\psi(x)
	\leq
	A\sigma (1+|x|^2)^{-\sigma-1}\left[\Lambda^2-\frac{\beta(\sigma+1)}{17(N+2)}\right]
\end{equation*}
for every $|x|\geq 1/4$. Choosing large enough $\sigma=\sigma(N,\Lambda,\beta)>0$ we get that $\psi(x)\leq 0$ for every $|x|\geq 1/4$ and the proof is finished.
\end{proof}

\subsection{Estimate for the distribution function of $u$}

In the next lemma we adapt \cite[Lemma 10.1]{caffarellis09} to pass from a pointwise estimate to an estimate in measure. This is done by combining the estimate for the difference between $u$ and $\Gamma$ near the contact set with the $\eps$-ABP estimate.

\begin{lemma}
\label{first}
There exist $\eps_0,\rho>0$, $M\geq 1$ and $0<\mu<1$ such that if $u$ is a bounded measurable function satisfying
\begin{equation*}
	\begin{cases}
	\L_\eps^-u\leq\rho & \text{ in } B_{2\sqrt{N}},
	\\
	u\geq 0 & \text{ in } \R^N,
	\end{cases}
\end{equation*}
for some $0<\eps\leq\eps_0$ and
\begin{equation*}
	\inf_{Q_3}u
	\leq
	1,
\end{equation*}
then
\begin{equation*}
	|\{u>  M\}\cap Q_1|
	\le 
	\mu.
\end{equation*}
\end{lemma}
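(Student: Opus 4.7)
The plan is to apply the $\eps$-ABP estimate (Theorem~\ref{eps-ABP}) to $w := \Psi - u$, where $\Psi$ is the barrier from Lemma~\ref{barrier}, and then upgrade the resulting pointwise estimate to a measure estimate via Corollary~\ref{estimate Q}. A key preliminary is the subadditivity-type inequality
\[
\L_\eps^+(\Psi - u) \geq \L_\eps^- \Psi - \L_\eps^- u,
\]
which follows from $\sup_z(a(z) + b(z)) \geq \inf_z a(z) + \sup_z b(z)$ applied to the sup term defining $\L_\eps^+$, together with $\L_\eps^+(-u) = -\L_\eps^- u$. Combined with the defining properties of $\Psi$ and the hypothesis $\L_\eps^- u \leq \rho$, this yields $\L_\eps^+ w \geq -(\psi + \rho)$ in $B_{2\sqrt{N}}$, while $w \leq 0$ outside $B_{2\sqrt{N}}$ because $\Psi \leq 0$ there and $u \geq 0$. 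Moreover, since $\Psi \geq 2$ on $Q_3 \subset B_{2\sqrt{N}}$ and $\inf_{Q_3} u \leq 1$, testing along a minimizing sequence shows $\sup w \geq 1$.

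Feeding this into Theorem~\ref{eps-ABP} gives
\[
1 \leq C\bigg(\sum_{Q \in \mathcal{Q}_\eps(K_w)} (\sup_Q(\psi+\rho)^+)^N |Q|\bigg)^{1/N}.
\]
I then split the sum according to whether a cube $Q$ meets $B_{1/4}$. Outside $B_{1/4}$ one has $\psi \leq 0$ by Lemma~\ref{barrier}, so $(\psi+\rho)^+ \leq \rho$ there, and the tail contributes at most $\rho^N |B_{2\sqrt{N}}|$. Choosing $\rho$ so small that $C^N \rho^N |B_{2\sqrt{N}}| \leq 1/2$, this tail can be absorbed and the remaining cubes, using the crude bound $\sup_Q(\psi+\rho)^+ \leq \psi(0)+\rho$, must satisfy
\[
|K_w \cap B_{1/4 + \eps}| \geq c_0 > 0.
\]

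For each $Q \in \mathcal{Q}_\eps(K_w \cap B_{1/4+\eps})$, Corollary~\ref{estimate Q} yields $|3\sqrt{N}\,Q \cap \{\Gamma_w - w \leq C_1 \eps^2\}| \geq c|Q|$, where $\Gamma_w$ denotes the concave envelope of $w^+$. Summing over these essentially disjoint cubes and exploiting the bounded overlap of the dilations $3\sqrt{N}\,Q$, the union
\[
E := \bigcup_{Q \in \mathcal{Q}_\eps(K_w \cap B_{1/4+\eps})} \Bigl(3\sqrt{N}\,Q \cap \{\Gamma_w - w \leq C_1 \eps^2\}\Bigr)
\]
satisfies $|E| \geq c_1 > 0$ and, for $\eps$ small, $E \subset B_{1/2} \subset Q_1$. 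On $E$ the inequality $w \geq \Gamma_w - C_1\eps^2 \geq -C_1\eps^2$ (using $\Gamma_w \geq 0$) rearranges to $u = \Psi - w \leq \sup \Psi + C_1\eps^2 \leq M$ for $M := \sup \Psi + 1$ and $\eps$ sufficiently small, so $|\{u > M\} \cap Q_1| \leq 1 - c_1 =: \mu < 1$.

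The main obstacle is the ABP splitting in the second paragraph: the coarse bound $\psi(0)+\rho$ on cubes near $B_{1/4}$ has to be balanced against the $\rho^N$-tail from cubes far from that support, and it is the freedom to choose $\rho$ small that pins down the contact set in the region where the barrier is active and, consequently, makes $M$ a universal constant independent of both $\rho$ and $\eps$.
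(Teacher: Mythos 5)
Your proof is correct and follows essentially the same route as the paper's: subtract $u$ from the barrier $\Psi$, apply the $\eps$-ABP estimate to $w=\Psi-u$ (using the subadditivity $\L_\eps^+(\Psi-u)\geq\L_\eps^-\Psi-\L_\eps^-u$), absorb the $\rho$-tail by choosing $\rho$ small so that the contact cubes are forced to meet $B_{1/4}$ where $\psi>0$, and then upgrade to a measure estimate via Corollary~\ref{estimate Q} and bounded overlap of the dilations $3\sqrt{N}\,Q$. One notational slip worth fixing: the intermediate bound you write as $|K_w\cap B_{1/4+\eps}|\geq c_0$ should be a lower bound on $\sum_{Q\in\mathcal{Q}_\eps(K_w\cap B_{1/4})}|Q|$ rather than on the Lebesgue measure of the contact set itself (which could easily be zero); this is harmless since the sum of cube volumes is precisely what you feed into Corollary~\ref{estimate Q} in the next line.
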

\begin{proof}
The idea of the proof is as follows: first we use the auxiliary functions $\Psi$ and $\psi$ from Lemma~\ref{barrier} to define a new function 
$$
v=\Psi-u,
$$
which satisfies the assumptions in Theorem~\ref{eps-ABP} ($\eps$-ABP estimate) with $f=\psi+\rho$. Then we use the $\eps$-ABP together with the pointwise estimate $\inf_{Q_3}u\leq 1$ and the negativity of $\psi$ outside $B_{1/4}$ to obtain a lower bound for the measure of the union of all cubes $Q\in\mathcal{Q}_\eps(K_v\cap B_{1/4})$. Combining this with the estimate of the difference between $v$ and its concave envelope at each cube $Q$ (Corollary~\ref{estimate Q}) we can deduce the desired measure estimate for $u$.

Let $v=\Psi-u$ where $\Psi$ is the function from Lemma~\ref{barrier}. Since $u\geq 0$ and $\Psi\leq 0$ in $\R^N\setminus B_{2\sqrt{N}}$, then $v\leq 0$ in $\R^N\setminus B_{2\sqrt{N}}$. On the other hand,
\begin{equation*}
	\sup_{Q_3}v
	\geq
	\inf_{Q_3}\Psi-\inf_{Q_3}u
	\geq
	1.
\end{equation*}
Similarly, since $\delta v(x,\eps y)=\delta\Psi(x,\eps y)-\delta u(x,\eps y)$, then
\begin{equation*}
	\sup_{z\in B_\Lambda}\delta v(x,\eps z)
	\geq
	\inf_{z\in B_\Lambda}\delta \Psi(x,\eps z)-\inf_{z\in B_\Lambda}\delta u(x,\eps z)\end{equation*}
so we have that
\begin{equation*}
	\L_\eps^+v(x)
	\geq
	\L_\eps^-\Psi(x)-\L_\eps^-u(x)
	\geq
	-\psi(x)-\rho.
\end{equation*}

Summarizing, $v=\Psi-u$ satisfies $\sup_{Q_3}v\geq 1$ and
\begin{equation*}
	\begin{cases}
	\L_\eps^+v+\psi+\rho\geq 0 & \text{ in } B_{2\sqrt{N}},
	\\
	v\leq 0 & \text{ in } \R^N\setminus B_{2\sqrt{N}}.
	\end{cases}
\end{equation*}
Moreover, since $\psi$ is continuous, we are under the hypothesis of the $\eps$-ABP estimate in Theorem~\ref{eps-ABP}, and thus the following estimate holds,
\begin{equation*}
	\sup_{B_{2\sqrt{N}}}v
	\leq
	C_1\bigg(\sum_{Q\in\mathcal{Q}_\eps(K_v)}(\sup_Q\psi^++\rho)^N|Q|\bigg)^{1/N},
\end{equation*}
where $C_1>0$. Then, since $Q_3\subset B_{2\sqrt{N}}$ and $\sup_{Q_3}v\geq 1$, we obtain
\begin{equation*}
\begin{split}
	\frac{1}{C_1}
	\leq
	~&
	\bigg(\sum_{Q\in\mathcal{Q}_\eps(K_v)}(\sup_Q\psi^++\rho)^N|Q|\bigg)^{1/N}
	\\
	\leq
	~&
	\bigg(\sum_{Q\in\mathcal{Q}_\eps(K_v)}(\sup_Q\psi^+)^N|Q|\bigg)^{1/N}
	+
	\rho\bigg(\sum_{Q\in\mathcal{Q}_\eps(K_v)}|Q|\bigg)^{1/N},
\end{split}
\end{equation*}
where the second inequality follows immediately from Minkowski's inequality. Since $K_v\subset B_{2\sqrt{N}}$ and $\diam Q=\eps/4$ for each $Q\in\mathcal{Q}_\eps(K_v)$ then
\begin{equation*}
	\sum_{Q\in\mathcal{Q}_\eps(K_v)}|Q|
	\leq
	|B_{2\sqrt{N}+\eps/4}|
	\leq
	C_2^N,
\end{equation*}
for every $0<\eps\leq\eps_0$.
Replacing in the previous estimate and rearranging terms we get
\begin{equation*}
	\frac{1}{C_1}-C_2\rho
	\leq
	\bigg(\sum_{Q\in\mathcal{Q}_\eps(K_v)}(\sup_Q\psi^+)^N|Q|\bigg)^{1/N}.
\end{equation*}
Choosing small enough $\rho>0$ 
we have that 
\begin{equation*}
	\frac{1}{(2C_1)^N}
	\leq
	\sum_{Q\in\mathcal{Q}_\eps(K_v)}(\sup_Q\psi^+)^N|Q|.
\end{equation*}
Next we observe that by Lemma~\ref{barrier}, $\psi\leq 0$ in $\R^N\setminus B_{1/4}$, so $\psi^+\equiv 0$ for each $Q\in\mathcal{Q}_\eps(K_v)$ such that $Q\cap B_{1/4}=\emptyset$, while we estimate $\sup_Q\psi^+\leq\psi(0)$ when $Q\cap B_{1/4}\neq\emptyset$. Thus
\begin{equation*}
	\frac{1}{(2C_1\psi(0))^N}
	\leq
	\sum_{Q\in\mathcal{Q}_\eps(K_v\cap B_{1/4})}|Q|,
\end{equation*}
and recalling Corollary~\ref{estimate Q}, we obtain the following inequality,
\begin{equation*}
	\frac{c}{(2C_1\psi(0))^N}
	\leq
	\sum_{Q\in\mathcal{Q}_\eps(K_v\cap B_{1/4})}\big|3\sqrt{N}\,Q \cap \{\Gamma-v\leq C(\sup_Q\psi^++\rho)\eps^2\}\big|.
\end{equation*}
Notice that $3\sqrt{N}\,Q\subset B_{1/2}\subset Q_1$ for each $Q\in\mathcal{Q}_\eps(K_v\cap B_{1/4})$ and every $0<\eps\leq\eps_0$ with $\eps_0>0$ sufficiently small, so
\begin{equation*}
	3\sqrt{N}\,Q \cap \{\Gamma-v\leq C(\sup_Q\psi^++\rho)\eps^2\}
	\subset
	Q_1 \cap \{\Gamma-v\leq C(\psi(0)+\rho)\eps^2\}
\end{equation*}
for each $Q\in\mathcal{Q}_\eps(K_v\cap B_{1/4})$, where the fact that $\sup_Q\psi^+\leq\psi(0)$ has been used again here. 
Furthermore, if $\ell=\ell(N)\in\N$ is the unique odd integer such that $\ell-2<3\sqrt{N}\leq\ell$, then each cube $Q\in\mathcal{Q}_\eps(K_v\cap B_{1/4})$ is contained in at most $\ell^N$cubes of the form $3\sqrt{N}\,Q'$ with $Q'\in\mathcal{Q}_\eps(K_v\cap B_{1/4})$, and in consequence 
\begin{equation*}
	\frac{c}{(2C_1\psi(0))^N}
	\leq
	\ell^N\big|Q_1 \cap \{\Gamma-v\leq C(\psi(0)+\rho)\eps^2\}\big|.
\end{equation*}

Finally, since $\Gamma\geq 0$, $v=\Psi-u\leq\Psi(0)-u$ and $\eps\leq\eps_0$,
\begin{equation*}
	\frac{c}{(2C_1\psi(0)\ell)^N}
	\leq
	\big|Q_1 \cap \{u\leq \Psi(0)+C(\psi(0)+\rho)\eps_0^2\}\big|.
\end{equation*}
Then let $M:\,=\Psi(0)+C(\psi(0)+\rho)\eps_0^2$ and $1-\mu:\,=c(2C_1\psi(0)\ell)^{-N}$, so that we get 
\begin{align*}
	1-\mu
	\leq
	\big|Q_1 \cap \{u\leq M\}\big|,
\end{align*}
which immediately implies the claim.
\end{proof}

\section{De Giorgi oscillation estimate}

A key intermediate result towards the oscillation estimate (Lemma \ref{DeGiorgi}), H\"older regularity (Theorem \ref{Holder}) and Harnack's inequality is a power decay estimate for $|\{u>t\}\cap Q_1|$. This will be Lemma~\ref{measure bound}. It is based on the measure
estimates Lemma~\ref{first} and Lemma~\ref{second}, as well as a discrete version of the Calder\'on-Zygmund decomposition, Lemma~\ref{CZ} below.

\subsection{Calder\'on-Zygmund decomposition}

The discrete nature of the DPP does not allow to apply the rescaling argument to arbitrary small dyadic cubes. To be more precise, since all the previous estimates require certain bound $\eps_0>0$ for the scale-size in the DPP, and since the extremal Pucci-type operators $\L_\eps^\pm$ rescale as $\L_{2^\ell\eps}^\pm$ in each dyadic cube of generation $\ell$, the rescaling argument will only work on those dyadic cubes of generation $\ell\in\N$ satisfying $2^\ell\eps<\eps_0$. For that reason, the dyadic splitting in  the Calder\'on-Zygmund decomposition has to be stopped at generation $L$, and in consequence the Calder\'on-Zygmund decomposition lemma has to be adapted. We need an additional criterion for selecting cubes in order to control the error caused by stopping the process at generation $L$. We use the idea from \cite{arroyobp}.

We use the following notation: $\mathcal D_\ell$ is the family of dyadic open subcubes of $Q_1$ of generation $\ell\in\N$, where $\mathcal D_0=\{Q_1\}$, $\mathcal D_1$ is the family of $2^N$ dyadic cubes obtained by dividing $Q_1$, and so on. Given $\ell\in\N$ and $Q\in\mathcal D_\ell$ we define $\mathrm{pre}(Q)\in\mathcal D_{\ell-1}$ as the unique dyadic cube in $\mathcal D_{\ell-1}$ containing $Q$.
 
\begin{lemma}[Calder\'on-Zygmund]\label{CZ}
Let $A\subset B\subset Q_1$ be measurable sets, $\delta_1,\delta_2\in (0,1)$ and $L\in\N$. Suppose that the following assumptions hold:
\begin{enumerate}
\item $|A|\leq\delta_1$;
\item \label{item:includedB}if $Q\in\mathcal D_\ell$ for some $\ell\leq L$ satisfies $|A\cap Q|>\delta_1|Q|$ then $\mathrm{pre}(Q)\subset B$;
\item \label{item:includedB2} if $Q\in\mathcal D_L$ satisfies $|A\cap Q|>\delta_2|Q|$ then $Q\subset B$;
\end{enumerate}
Then,
\begin{align*}
	|A|
	\leq
	\delta_1|B|+\delta_2.
\end{align*}
\end{lemma}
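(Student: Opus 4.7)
The plan is to carry out a stopping-time Calder\'on-Zygmund decomposition truncated at generation $L$, using hypothesis~(\ref{item:includedB2}) to absorb the error coming from the truncation.

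By~(1) we have $|A\cap Q_1|\leq\delta_1|Q_1|$, so $Q_1$ itself does not satisfy the high-density condition $|A\cap Q|>\delta_1|Q|$. I therefore let $\mathcal{F}=\{Q_j\}$ be the family of maximal dyadic cubes in $\bigcup_{\ell=0}^{L}\mathcal{D}_\ell$ for which $|A\cap Q_j|>\delta_1|Q_j|$. The cubes in $\mathcal{F}$ are pairwise disjoint, each one has generation at least $1$, and by the maximality of the selection each parent satisfies $|A\cap\mathrm{pre}(Q_j)|\leq\delta_1|\mathrm{pre}(Q_j)|$, while~(\ref{item:includedB}) gives $\mathrm{pre}(Q_j)\subset B$. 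Next, I let $\mathcal{E}$ be the family of cubes $Q\in\mathcal{D}_L$ not contained in any element of $\mathcal{F}$; the stopping argument guarantees that no ancestor of such $Q$ is bad, so in particular $|A\cap Q|\leq\delta_1|Q|$. I split $\mathcal{E}=\mathcal{E}_1\sqcup\mathcal{E}_2$ according to whether $|A\cap Q|>\delta_2|Q|$ or not, and apply hypothesis~(\ref{item:includedB2}) to obtain $Q\subset B$ for every $Q\in\mathcal{E}_1$.

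The key unifying step is to pool $\{\mathrm{pre}(Q_j):Q_j\in\mathcal{F}\}\cup\mathcal{E}_1$ and extract the maximal dyadic cubes $\{R_m\}$ inside this combined collection. These $R_m$ are pairwise disjoint, contained in $B$, and each satisfies $|A\cap R_m|\leq\delta_1|R_m|$ (either because $R_m=\mathrm{pre}(Q_{j_0})$ for some maximal bad $Q_{j_0}$, or because $R_m\in\mathcal{E}_1\subset\mathcal{E}$). Moreover every $Q_j\in\mathcal{F}$ lies in $\mathrm{pre}(Q_j)$ and hence in some $R_m$, and every $Q\in\mathcal{E}_1$ is itself contained in some $R_m$. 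Since $\bigcup_jQ_j$ together with the cubes of $\mathcal{E}$ partitions $Q_1$ up to measure zero, I estimate
\[
|A|\leq\Big|A\cap\bigcup_m R_m\Big|+\sum_{Q\in\mathcal{E}_2}|A\cap Q|\leq\delta_1\sum_m|R_m|+\delta_2\sum_{Q\in\mathcal{E}_2}|Q|\leq\delta_1|B|+\delta_2,
\]
where the last inequality uses $\bigcup_m R_m\subset B$ together with $\sum_{Q\in\mathcal{E}_2}|Q|\leq|Q_1|=1$.

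The main obstacle I anticipate is that the parents $\mathrm{pre}(Q_j)$ need not be disjoint from the cubes of $\mathcal{E}_1$, so naively summing a $\delta_1|B|$ contribution from $\mathcal{F}$ with a $\delta_1\sum_{\mathcal{E}_1}|Q|$ contribution would lead to double counting. Merging both types into the single maximal family $\{R_m\}$ precisely circumvents this, and the crucial point is that every selected $R_m$ has density $\leq\delta_1$ (not merely $\leq 1$), which is what produces the coefficient $\delta_1$ in front of $|B|$ in the conclusion rather than a coefficient of~$1$.
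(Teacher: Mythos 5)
Your proof is correct and follows essentially the same route as the paper: a Calder\'on--Zygmund decomposition truncated at generation $L$, selecting parents of high-density cubes via hypothesis~(\ref{item:includedB}), adding the $\delta_2$-heavy cubes of $\mathcal D_L$ via hypothesis~(\ref{item:includedB2}), and bounding the leftover generation-$L$ cubes by $\delta_2$. Your pooling of the parents with $\mathcal{E}_1$ and extraction of maximal cubes is just a static reformulation of the paper's sequential stopping procedure (which never splits inside an already selected cube), and the key density bound $|A\cap R_m|\leq\delta_1|R_m|$ coincides with the paper's estimate for the cubes of its family $\mathcal Q_B$.
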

\begin{proof}
We will construct a collection of open cubes $\mathcal Q_B$, containing subcubes from generations $\mathcal D_0,\mathcal D_1,\dots,\mathcal D_L$. 
The cubes will be pairwise disjoint and will be contained in $B$.
Recall that by assumption 
$
|Q_1 \cap A|\leq \delta_1 \abs{Q_1}.
$
Then we split $Q_1$ into $2^N$ dyadic cubes $\mathcal D_1$. For those  dyadic cubes $Q\in \mathcal D_1$ that satisfy 
\begin{align}
\label{eq:treshold}
|A\cap Q|>\delta_1|Q|,
\end{align}
we select $\mathrm{pre}(Q)$ into $\mathcal Q_B$. 
Those cubes are included in $B$ because of assumption (\ref{item:includedB}).
 
For other dyadic cubes that do not satisfy \eqref{eq:treshold} and are not contained in any cube already included in $\mathcal Q_B$,
we keep splitting, and again repeat the selection according to \eqref{eq:treshold}.  We repeat splitting $L\in \mathbb N$ times. At the level $L$, in addition to the previous process, we also select those cubes $Q\in \mathcal D_L$ (not the predecessors) into $\mathcal Q_B$ for which 
\begin{align}
\label{eq:treshold2}
 |A\cap Q|> \delta_2 |Q|,
\end{align}
and are not contained in any cube already included in $\mathcal Q_B$.
Those cubes are included in $B$ because of assumption (\ref{item:includedB2}).

Observe that for $\mathrm{pre}(Q)$ selected according to \eqref{eq:treshold} into $\mathcal Q_B$, it holds that
\begin{align*}
|A\cap \mathrm{pre}(Q)|\le \delta_1|\mathrm{pre}(Q)|
\end{align*}
since otherwise we would have stopped splitting already at the earlier round. We also have $|A\cap Q|\le \delta_1|Q|$ for cubes $Q$ selected according to \eqref{eq:treshold2}  into $\mathcal Q_B$, since their predecessors were not selected according to \eqref{eq:treshold}. Summing up, for all the cubes $Q\in \mathcal Q_B$, it holds that
\begin{align}
\label{eq:meas-bound}
|A\cap Q|\le \delta_1|Q|.
\end{align}

Next we define $\mathcal G_L$ as a family of cubes of $\mathcal D_L$ that are not included in any of the cubes in $\mathcal Q_B$.
It immediately holds a.e.\ that
\[
A\subset Q_1=\bigcup_{Q\in\mathcal Q_B} Q  \cup \bigcup_{Q\in\mathcal G_L} Q.
\]
By this, using \eqref{eq:meas-bound} for every $Q\in \mathcal Q_B$, as well as observing that $|A\cap Q|\leq \delta_2|Q|$  by \eqref{eq:treshold2} for every $Q\in \mathcal G_L$, we get
\[
\begin{split}
|A|
&=\sum_{Q\in\mathcal Q_B} |A\cap Q| + \sum_{Q\in\mathcal G_L} |A\cap Q|\\
&\leq\sum_{Q\in\mathcal Q_B} \delta_1|Q| + \sum_{Q\in\mathcal G_L} \delta_2|Q|\\
&\leq  \delta_1 |B|+\delta_2 .
\end{split}
\]
In the last inequality, we used that the cubes in $\mathcal Q_B$ are included in $B$, as well as the fact that they are disjoint by construction.
\end{proof}

As we have already pointed out, we use the estimate from Lemma~\ref{first} to show that the condition (\ref{item:includedB}) in the  Calder\'on-Zygmund lemma is satisfied. To ensure that the remaining condition is satisfied for the dyadic cubes in $\mathcal{D}_L$ not considered before stopping the dyadic decomposition, we prove the following result using the equation. Here $\eps$ is `relatively large'.

\begin{lemma}
\label{second}
Let $0<\eps_0<1$ and $\rho>0$. Suppose that $u$ is a bounded measurable function satisfying
\begin{equation*}
	\begin{cases}
	\L_\eps^-u\leq\rho & \text{ in } Q_{10\sqrt{N}},
	\\
	u\geq 0 & \text{ in } \R^N,
	\end{cases}
\end{equation*}
for some $\frac{\eps_0}{2}\leq\eps\leq\eps_0$. There exists a constant $c=c(\eps_0,\rho)>0$ such that if
\begin{equation*}
	|\{u> K\}\cap Q_1|
	>
	\frac{c}{K}
\end{equation*}
holds for some $K>0$, then
\begin{equation*}
	u> 1 \quad \text{ in }Q_1.
\end{equation*}
\end{lemma}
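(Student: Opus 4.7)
The key observation is that $\eps\ge\eps_0/2$ is comparable to $1$, so the ball-average in $\L_\eps^-$ is essentially non-local at scale $1$, and after only a bounded number of iterations (depending on $\eps_0$ and $N$) it mixes information across all of $Q_1$. My plan is to rewrite $\L_\eps^-u\le\rho$ as a pure averaging inequality, iterate it a fixed number of times, and combine the iterated inequality with the measure hypothesis $|\{u>K\}\cap Q_1|>c/K$ to produce a pointwise bound $u(x)>1$.

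Since $u\ge 0$, the trivial bound $\inf_{z\in B_\Lambda}\delta u(x,\eps z)\ge -2u(x)$ together with the definition of $\L_\eps^-u$ in \eqref{L-eps-} and the symmetry of the ball integral yield
\[
u(x)\ \ge\ \beta\vint_{B_\eps(x)}u(y)\,dy-\eps^2\rho
\qquad\text{for every } x\in Q_{10\sqrt N}.
\]
Setting $w:\,=u+\eps^2\rho/(1-\beta)$ absorbs the error into the cleaner averaging inequality $w\ge\beta Tw$, where $Tw(x):\,=\vint_{B_\eps(x)}w$. Iterating $k$ times and expressing the result against the $k$-fold convolution kernel $K_k$ of $|B_\eps|^{-1}\mathbf 1_{B_\eps}$ gives
\[
w(x)\ \ge\ \beta^k\int K_k(x,y)\,w(y)\,dy.
\]
The iteration is legitimate provided all intermediate points lie in $Q_{10\sqrt N}$, which is automatic once $k\eps=O(\sqrt N)$.

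The main technical step is a uniform lower bound $K_k(x,y)\ge c_0(\eps_0,N)>0$ on $Q_1\times Q_1$ for some $k=k(\eps_0,N)$. I would prove by induction on $j\ge 2$ that there exist constants $c_j>0$ and radii $r_j=j\eps/2$ with $K_j(x,y)\ge c_j$ whenever $|y-x|\le r_j$. The inductive step reduces to
\[
K_{j+1}(x,z)\ \ge\ \frac{c_j}{|B_\eps|}\,\bigl|B_{r_j}(x)\cap B_\eps(z)\bigr|
\]
combined with an elementary estimate: for $|z-x|\le r_{j+1}=r_j+\eps/2$ the lens $B_{r_j}(x)\cap B_\eps(z)$ contains a ball of radius at least $\eps/4$, so its measure is at least $4^{-N}|B_\eps|$. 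Taking $k:\,=\lceil 2\sqrt N/\eps\rceil\le 4\sqrt N/\eps_0+1$ then makes $r_k\ge\sqrt N\ge\diam Q_1$ while keeping $c_k\ge c_0(\eps_0,N)>0$.

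Finally, for any $x\in Q_1$ this uniform kernel bound gives
\[
w(x)\ \ge\ \beta^k c_0\int_{Q_1}w\,dy\ \ge\ \beta^k c_0\, K\,\bigl|\{u>K\}\cap Q_1\bigr|\ >\ \beta^k c_0\, c,
\]
so $u(x)>\beta^k c_0\, c-\eps^2\rho/(1-\beta)$. Choosing $c=c(\eps_0,\rho)$ large enough makes this $>1$, completing the proof. The principal obstacle is the inductive kernel lower bound: one must guarantee simultaneously that the radii $r_j$ grow linearly in $j$ (so $k\sim\sqrt N/\eps$ suffices to make $r_k\ge\diam Q_1$) and that the constants $c_j$ decay only geometrically (so $c_0$ remains bounded below for $k$ of this size).
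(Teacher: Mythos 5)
Your proposal is correct and follows essentially the same route as the paper: iterate the averaging inequality $u(x)\geq\beta\vint_{B_\eps(x)}u\,dy-\eps^2\rho$ a bounded number of times, prove a uniform positive lower bound for the iterated kernel on $Q_1\times Q_1$ using $\eps\geq\eps_0/2$, and combine it with the measure hypothesis to force $u>1$. The only notable differences are technical: the paper derives the kernel lower bound from the radial monotonicity and strict positivity of the $n$-fold convolution $f^{*n}$ on $B_n$, whereas you build it by hand via an inductive lens-intersection estimate, and you absorb the error more cleanly by working with $w=u+\eps^2\rho/(1-\beta)$ instead of tracking the geometric sum explicitly.
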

\begin{proof}
By the definition of the minimal Pucci-type operator $\L_\eps^-$ and since $\L_\eps^- u(x)\leq \rho$ for every $x\in Q_{10\sqrt{N}}$ by assumption, rearranging terms we have
\[
\begin{split}
	u (x) 
	&
	\geq
	\alpha\inf_{\nu\in \M(B_\Lambda)}  \int u(x+\eps v) \,d\nu (v)+\beta\vint_{B_\eps(x)} u(y)\,dy-\eps^2\rho
	\\
	&\geq
	\beta\vint_{B_\eps(x)} u(y)\,dy-\eps^2\rho,
\end{split}
\]
where in the second inequality we have used that $u\geq 0$ to estimate the $\alpha$-term by zero.
Then, by considering $f=\frac{\chi_{B_1}}{|B_1|}$, we can rewrite this inequality as
\begin{equation*}
	u (x) 
	\geq
	\frac{\beta}{\eps^N}\int f\Big(\frac{y-x}{\eps}\Big) u(y)\,dy-\eps^2\rho,
\end{equation*}
which holds for every $x\in Q_{10\sqrt{N}}$, and in particular for every $|x|<5\sqrt{N}$. Next observe that if $|x|+\eps<5\sqrt{N}$, then $y\in Q_{10\sqrt{N}}$ for every $y\in B_\eps(x)$, and thus applying twice the previous inequality we can estimate by using change of variables
\begin{equation*}
\begin{split}
	u (x)
	\geq
	~&
	\frac{\beta}{\eps^N}\int f\Big(\frac{y-x}{\eps}\Big) \left(\frac{\beta}{\eps^N}\int f\Big(\frac{z-y}{\eps}\Big) u(z)\,dz-\eps^2\rho\right)\,dy-\eps^2\rho
	\\
	=
	~&
	\frac{\beta^2}{\eps^N}\int \left(\frac{1}{\eps^N}\int f\Big(\frac{y-x}{\eps}\Big) f\Big(\frac{z-y}{\eps}\Big) \,dy\right)u(z)\,dz-(1+\beta)\eps^2\rho
	\\
	=
	~&
	\frac{\beta^2}{\eps^N}\int (f*f)\Big(\frac{z-x}{\eps}\Big)u(z)\,dz-(1+\beta)\eps^2\rho,
\end{split}
\end{equation*}
which holds for every $|x|<5\sqrt{N}-\eps$.

Let $n\in\N$ to be fixed later and assume that $|x|+(n-1)\eps<5\sqrt{N}$. By iterating this argument $n$ times we obtain
\begin{equation}\label{inequality}
\begin{split}
	u(x)
	\geq
	~&
	\frac{\beta^n}{\eps^N}\int f^{*n}\Big(\frac{y-x}{\eps}\Big) u(y)\,dy-(1+\beta+\beta^2+\cdots+\beta^{n-1})\eps^2\rho
	\\
	\geq
	~&
	\frac{\beta^n}{\eps^N}\int f^{*n}\Big(\frac{y-x}{\eps}\Big) u(y)\,dy-\frac{\eps^2\rho}{1-\beta}
\end{split}
\end{equation}
for every $|x|<5\sqrt{N}-(n-1)\eps$, where $f^{*n}$ denotes the convolution of $f$ with itself $n$ times. Observe that $f^{*n}$ is a radial decreasing function and $f^{*n}>0$ in $B_n$. Thus, since $\eps\geq\frac{\eps_0}{2}$ by assumption,
\begin{equation*}
	f^{*n}\Big(\frac{y-x}{\eps}\Big)
	\geq
	f^{*n}\Big(\frac{2(y-x)}{\eps_0}\Big),
\end{equation*}
which is strictly positive whenever $|y-x|<\frac{n\eps_0}{2}$.
Now fix $n\in\N$ such that $|x|<5\sqrt{N}-(n-1)\eps_0$ for every $x\in Q_1$ and $|y-x|<\frac{n\eps_0}{2}$ for every $x,y\in Q_1$, that is
$n\in\N$ such that
\begin{equation*}
	2\sqrt{N}
	<
	n\eps_0
	<
	\frac{9}{2}\sqrt{N}+\eps_0.
\end{equation*}
Then
\begin{equation*}
	f^{*n}\Big(\frac{y-x}{\eps}\Big)
	\geq
	f^{*n}\Big(\frac{2\sqrt{N}e_1}{\eps_0}\Big)
	=\,:
	C
	>
	0
\end{equation*}
for every $x,y\in Q_1$. In this way $Q_1$ is contained in the support of $y\mapsto f^{*n}\big(\frac{y-x}{\eps}\big)$ for every $x\in Q_1$, so recalling that $u\geq 0$ we can estimate
\begin{equation*}
\begin{split}
	\int f^{*n}\Big(\frac{y-x}{\eps}\Big) u(y)\,dy
	\geq
	~&
	\int_{Q_1} f^{*n}\Big(\frac{y-x}{\eps}\Big) u(y)\,dy
	\\
	\geq
	~&
	C\int_{Q_1}u(y)\,dy
	\\
	\geq
	~&
	C\int_{\{u> K\}\cap Q_1}u(y)\,dy
	\\
	>
	~&
	C|\{u> K\}\cap Q_1|\,K
\end{split}
\end{equation*}
for each $K>0$.
Replacing this in \eqref{inequality} and recalling that $\eps\leq\eps_0$ we get
\begin{equation*}
\begin{split}
	u(x)
	>
	~&
	C\frac{\beta^n}{\eps^N}|\{u> K\}\cap Q_1|\,K-\frac{\eps^2\rho}{1-\beta}
	\\
	\geq
	~&
	C\frac{\beta^n}{\eps_0^N}|\{u> K\}\cap Q_1|\,K-\frac{\eps_0^2\rho}{1-\beta}
\end{split}
\end{equation*}
for each $K>0$ and every $x\in Q_1$.

Finally, let us fix $c=\frac{\eps_0^N}{C\beta^n}\big(1+\frac{\eps_0^2\rho}{1-\beta}\big)$. By assumption, $|\{u> K\}\cap Q_1|\,K>c$ holds for some $K>0$, so
\begin{equation*}
\begin{split}
	u(x)
	>
	C\frac{\beta^n}{\eps_0^N}c-\frac{\eps_0^2\rho}{1-\beta}
	=
	1
\end{split}
\end{equation*}
for every $Q_1$ and the proof is finished.
\qedhere
\end{proof}

\subsection{Power decay estimate}

The power decay estimate (Lemma~\ref{measure bound}) is obtained by deriving an estimate between the superlevel sets of $u$ and then iterating the estimate. In order to obtain the estimate between the superlevel sets, we use a discrete version of the  Calder\'on-Zygmund decomposition (Lemma~\ref{CZ}) together with  the preliminary measure estimates from Lemma~\ref{first} and Lemma~\ref{second}.

\begin{lemma}
\label{lem:main}
There exist $\eps_0,\rho,c>0$, $M\geq 1$ and $0<\mu<1$ such that if $u$ is a bounded measurable function satisfying
\begin{equation*}
	\begin{cases}
	\L_\eps^-u\leq\rho & \text{ in } Q_{10\sqrt{N}},
	\\
	u\geq 0 & \text{ in } \R^N,
	\end{cases}
\end{equation*}
for some $0<\eps\leq\eps_0$ and
\begin{equation*}
	\inf_{Q_3}u
	\leq
	1,
\end{equation*}
then
\begin{equation*}
	|\{u> K^k\}\cap Q_1|
	\leq
	\frac{c}{(1-\mu)K}+\mu^k,
\end{equation*}
holds for every $K\ge M$ and $k\in\N$.
\end{lemma}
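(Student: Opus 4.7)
The plan is to establish the one-step recursive estimate $a_{k+1} \leq \mu a_k + c/K$ for the distribution function $a_k := |\{u > K^k\} \cap Q_1|$, and then iterate it geometrically. The base case $k=1$ is immediate from Lemma~\ref{first}: since $K\geq M$ one gets $a_1\leq |\{u>M\}\cap Q_1|\leq \mu$, which is already bounded by $\mu+c/((1-\mu)K)$. So everything reduces to the recursion.

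For the recursive step I would apply Lemma~\ref{CZ} with $A=\{u>K^{k+1}\}\cap Q_1$, $B=\{u>K^k\}\cap Q_1$, $\delta_1=\mu$, $\delta_2=c/K$, and stopping generation $L=L(\eps)\in\N$ chosen so that $\eps/\eps_0\leq 2^{-L}<2\eps/\eps_0$; this puts the rescaled scale $\tilde\eps=\eps/2^{-L}$ in $(\eps_0/2,\eps_0]$, exactly the window in which Lemma~\ref{second} operates at the bottom. Hypothesis~(1) holds because $A\subset\{u>K\}\cap Q_1$ has measure at most $\mu$ by the base case. For hypothesis~(2), fix $Q\in\mathcal D_\ell$ with $1\leq \ell\leq L$, center $y_0$, side $\sigma=2^{-\ell}$, and argue by contrapositive: if some $x_0\in\mathrm{pre}(Q)$ has $u(x_0)\leq K^k$, then rescale by $\tilde u(y):=u(y_0+\sigma y)/K^k$. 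The chain rule gives $\L_{\tilde\eps}^-\tilde u=(\sigma^2/K^k)\L_\eps^- u(y_0+\sigma\cdot)\leq \rho$ on the rescaled domain (which, since $\sigma\leq 1/2$ and $y_0\in Q_1$, contains $B_{2\sqrt N}$); moreover $\mathrm{pre}(Q)-y_0\subset [-3\sigma/2,3\sigma/2]^N=\sigma Q_3$, so $(x_0-y_0)/\sigma\in Q_3$ and $\inf_{Q_3}\tilde u\leq 1$. Lemma~\ref{first} then yields $|\{\tilde u>M\}\cap Q_1|\leq\mu$, which on the original scale reads $|\{u>MK^k\}\cap Q|\leq \mu|Q|$; using $K\geq M$ this contradicts $|A\cap Q|>\mu|Q|$, so $\mathrm{pre}(Q)\subset B$. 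For hypothesis~(3), the same rescaling at $\ell=L$ places $\tilde\eps\in(\eps_0/2,\eps_0]$, so if $|A\cap Q|>(c/K)|Q|$ then $|\{\tilde u>K\}\cap Q_1|>c/K$, and Lemma~\ref{second} forces $\tilde u>1$ in $Q_1$, i.e.\ $Q\subset B$.

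With the three hypotheses verified, Lemma~\ref{CZ} gives $a_{k+1}\leq \mu a_k+c/K$, and iterating from $a_1\leq \mu$ produces
\[
a_k\ \leq\ \mu^k+\frac{c}{K}\sum_{j=0}^{k-1}\mu^j\ \leq\ \mu^k+\frac{c}{(1-\mu)K},
\]
which is the claim. The most delicate point I anticipate is parameter compatibility rather than any single estimate: the $\rho$ in the statement must be small enough both for the base application of Lemma~\ref{first} and for its interior rescaled applications (the rescaled right-hand side $\sigma^2\rho/K^k$ is automatically smaller, so this is monotone in our favor), while $\eps_0$ must be shrunk so that $L\geq 1$ and the bottom scale $\tilde\eps$ falls in the interval where Lemma~\ref{second} supplies its constant $c=c(\eps_0,\rho)$---this $c$ is precisely the one appearing in $\delta_2=c/K$ and in the final bound. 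Once $\eps_0$ and $\rho$ are taken as the smaller of the values required by Lemmas~\ref{first} and~\ref{second}, the constants $c,M,\mu$ are independent of $k$, and the Calder\'on--Zygmund iteration goes through uniformly.
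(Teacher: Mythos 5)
Your proposal is correct and follows essentially the same path as the paper's own proof: apply the modified Calder\'on--Zygmund lemma (Lemma~\ref{CZ}) with $\delta_1=\mu$, $\delta_2=c/K$, stopping generation $L$ chosen so that $2^L\eps$ lands in the $[\eps_0/2,\eps_0]$ window where Lemma~\ref{second} operates, and verify the two inclusion hypotheses by rescaling to $\tilde u$ and invoking Lemma~\ref{first} (for generations $\ell\le L$) and Lemma~\ref{second} (at generation $L$) respectively, using $K\geq M\geq 1$ to absorb the $K^{k-1}$ and $2^{2\ell}$ factors on the right-hand side. Your remark about parameter compatibility is resolved exactly as you anticipate (and as the paper does): $\eps_0,\rho,M,\mu$ are fixed by Lemma~\ref{first} and then fed into Lemma~\ref{second}, which produces $c=c(\eps_0,\rho)$; there is no circularity.
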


\begin{proof}
The values of $M$, $\mu$, $\eps_0$ and $\rho$  are already given by Lemma~\ref{first}, while $c$ has been fixed in Lemma~\ref{second}. 

For $k=1$, by Lemma~\ref{first}, we have
\[
|\{u> K\}\cap Q_1|\le  |\{u> M\}\cap Q_1|\le\mu\le \frac{c}{K}+\mu.
\]
Now we proceed by induction. 
We consider
\[
A:=A_{k}:=\{u> K^k\}\cap Q_1 \quad \text{ and } B:=A_{k-1}:=\{u> K^{k-1}\}\cap Q_1.
\]

We have $A\subset B\subset Q_1$ and $|A|\le \mu$.
We apply Lemma~\ref{CZ} for $\delta_1=\mu$, $\delta_2=\frac{c}{K}$ and $L\in\N$ such that $2^L\eps<\eps_0\leq 2^{L+1}\eps$.
We have to check in two cases that certain dyadic cubes are included in $B$.

Observe that since $|A|\le \mu$, the first assumption in Lemma~\ref{CZ} is satisfied. Next we check that the remaining conditions in Lemma~\ref{CZ} are also satisfied. Given any cube $Q\in\mathcal{D}_\ell$ for some $\ell\leq L$, we define $\tilde u:Q_1\to\R$ as a rescaled version of $u$ restricted to $Q$, that is
\begin{equation}\label{tilde u}
	\tilde u(y)
	=
	\frac{1}{K^{k-1}}\,u(x_0+2^{-\ell}y)
\end{equation}
for every $y\in Q$, where $x_0$ stands for the center of $Q$. Then
\begin{equation*}
	|\{\tilde u> K\}\cap Q_1|
	=
	2^{N\ell}|\{u> K^k\}\cap Q|
	=
	\frac{|A\cap Q|}{|Q|}.
\end{equation*}

Let us suppose that $Q$ is a cube in $\mathcal{D}_\ell$ for some $\ell\leq L$ satisfying
\begin{align}
\label{eq:meas-assump}
|A\cap Q|>\mu |Q|.
\end{align}
We have to check that $\mathrm{pre}(Q)\subset B$. Let us suppose on the contrary that the inclusion does not hold, that is that there exists $\tilde x\in\mathrm{pre}(Q)$ such that $u(\tilde x)\le K^{k-1}$. By \eqref{tilde u} we have that
\begin{equation*}
	\delta\tilde u(y,\tilde\eps z)
	=
	\frac{1}{K^{k-1}}\,\delta u(x_0+2^{-\ell}y,\eps z),
\end{equation*}
where $\tilde\eps=2^\ell\eps\leq 2^L\eps<\eps_0$, and $\delta\tilde u(y,\tilde\eps z)$ is defined according to (\ref{eq:delta}). Replacing this in the definition of $\L_\eps^-$ in \eqref{L-eps-}, and since $\L_\eps^-u\leq\rho$ by assumption, we obtain
\begin{equation*}
	\L_{\tilde\eps}^-\tilde u(y)
	=
	\frac{1}{2^{2\ell}K^{k-1}}\,\L_\eps^-u(x_0+2^{-\ell}y)
	\leq
	\frac{\rho}{2^{2\ell}K^{k-1}}
	\leq
	\rho.
\end{equation*}
where we have used that $K\geq M\geq1$. Moreover $\tilde u\geq 0$ and $\inf_{Q_3}\tilde u\leq 1$ since  $u(\tilde x)\le K^{k-1}$ by the counter assumption.
Hence, the rescaled function $\tilde u$ satisfies the assumptions in Lemma~\ref{first}, and thus
\begin{equation*}
	\frac{|A\cap Q|}{|Q|}
	=
	|\{\tilde u> K\}\cap Q_1|
	\le
	\mu,
\end{equation*}
which contradicts (\ref{eq:meas-assump}). Thus $\mathrm{pre}(Q)\subset B$ and the second condition in Lemma~\ref{CZ} is satisfied.

Suppose now that $Q\in\mathcal{D}_L$ is a dyadic cube satisfying
\[
|A\cap Q|>\frac{c}{K}|Q|.
\]
Then
\begin{equation*}
	|\{\tilde u> K\}\cap Q_1|
	=
	\frac{|A\cap Q|}{|Q|}
	>
	\frac{c}{K},
\end{equation*}
and by Lemma~\ref{second} we have that $\tilde u\geq 1$ in $Q_1$. Recalling \eqref{tilde u} we get that $u\geq K^{k-1}$ in $Q$, and thus $Q\subset B$ as desired.

Finally, the assumptions in Lemma~\ref{CZ} are satisfied, so we can conclude that
\begin{equation*}
	|A|
	\leq
	\frac{c}{K}+\mu|B|,
\end{equation*}
so the result follows by induction.
We get
\begin{equation*}
	|\{u> K^k\}\cap Q_1|
	\leq
	\frac{c}{K}(1+\mu+\cdots+\mu^{k-1})+\mu^k
	\leq
	\frac{c}{(1-\mu)K}+\mu^k
\end{equation*}
as desired.
\end{proof}

Next we show that a convenient choice of the constants in the previous result immediately leads to the desired power decay estimate for $|\{u\geq t\}\cap Q_1|$.

\begin{lemma}
\label{measure bound}

Let $u$ be a function satisfying the conditions from Lemma~\ref{lem:main}. There exist $a>0$ and $d\geq 1$ such that
\[
|\{u>  t\}\cap Q_1|\leq d  e^{-\sqrt{\frac{\log t }{a}}}
\]
for every $t\ge 1$.
\end{lemma}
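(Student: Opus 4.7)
The plan is to deduce this power-decay bound directly from Lemma~\ref{lem:main} by optimizing its two parameters $K \geq M$ and $k \in \mathbb{N}$ as functions of $t$. Since $\{u > t\} \subset \{u > K^k\}$ whenever $K^k \leq t$, for each admissible pair $(K,k)$ with $K^k \leq t$ one has
\[
|\{u > t\} \cap Q_1| \leq \frac{c}{(1-\mu)K} + \mu^k,
\]
and the game is to balance the two right-hand terms.

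Writing $\log K = s$ and $k \approx \log t / s$, the two terms become $\frac{c}{1-\mu} e^{-s}$ and $e^{-|\log \mu|\log t / s}$. They are of the same order precisely when the exponents coincide, i.e.\ $s^2 = |\log \mu| \log t$. This motivates setting
\[
a := \frac{1}{|\log \mu|}, \qquad s := \sqrt{\frac{\log t}{a}}, \qquad K := e^s, \qquad k := \lfloor \log t / s \rfloor,
\]
so that automatically $K^k \leq t$ and both terms are of order $e^{-\sqrt{\log t / a}}$.

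With these choices I would then apply Lemma~\ref{lem:main} and estimate
\[
\frac{c}{(1-\mu)K} = \frac{c}{1-\mu}\, e^{-\sqrt{\log t/a}}, \qquad \mu^k \leq \mu^{-1} \mu^{\log t/s} = \mu^{-1} e^{-\sqrt{\log t/a}},
\]
where in the last equality I used the defining identity $s^2 = |\log\mu|\log t$. Summing gives the bound with constant $d := \frac{c}{1-\mu} + \frac{1}{\mu}$. The parameter constraints required by Lemma~\ref{lem:main} are $K \geq M$ and $k \geq 1$, which translate into lower bounds on $\log t$ of the form $\log t \geq (\log M)^2 / |\log \mu|$ and $\log t \geq |\log \mu|$, respectively; both hold once $t$ exceeds some explicit threshold $t_0$.

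The only remaining item, which I view as purely cosmetic rather than a genuine obstacle, is the range $1 \leq t < t_0$ on which the above parameter constraints may fail. On this bounded range the right-hand side $e^{-\sqrt{\log t / a}}$ is bounded below by a positive constant depending only on $t_0$ and $a$, so the trivial estimate $|\{u > t\} \cap Q_1| \leq |Q_1| = 1$ is absorbed by enlarging $d$. The real work was done in Lemma~\ref{lem:main}; the present statement is essentially a reformulation via the balanced choice of $(K,k)$.
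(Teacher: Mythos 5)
Your proposal is correct and follows essentially the same route as the paper: the same choice $a=1/\log(1/\mu)$, $K=e^{\sqrt{\log t/a}}$, $k\approx\sqrt{a\log t}$, the same application of Lemma~\ref{lem:main} yielding the constant $\frac{c}{1-\mu}+\frac{1}{\mu}$, and the same handling of small $t$ by enlarging $d$ (the paper does this via the case $K(t)<M$ with $d=\max\{M,\frac{c}{1-\mu}+\frac{1}{\mu}\}$).
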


\begin{proof}
Let $M\geq 1$ and $\mu\in(0,1)$ be the constants from Lemma~\ref{lem:main}. Let us fix $a=\frac{1}{\log\frac{1}{\mu}}>0$. Then given $t\geq 1$ we choose $K=K(t)=e^{\sqrt{\log(t)/a}}\geq 1$, so $t=K^{a\log K}$. We distinguish two cases.

First, if $K=K(t)\geq M$, recalling Lemma~\ref{lem:main} we have that the estimate
\begin{equation*}
	|\{u>K^k\}\cap Q_1|
	\leq
	\frac{c}{(1-\mu)K}+\mu^k
\end{equation*}
holds for every $k\in\N$. In particular, if we fix $k=\lfloor a\log K\rfloor$ we get that
\begin{equation*}
	K^k
	\leq
	K^{a\log(K)}
	=
	t
\end{equation*}
and
\begin{equation*}
	\mu^k
	<
	\mu^{a\log(K)-1}
	=
	\frac{1}{K\mu}.
\end{equation*}
Using these inequalities together with the estimate from Lemma~\ref{lem:main} we obtain
\begin{equation*}
\begin{split}
	|\{u> t\}\cap Q_1|
	\leq
	~&
	|\{u> K^k\}\cap Q_1|
	\\
	\leq
	~&
	\frac{c}{(1-\mu) K}+\mu^k
	\\
	\leq
	~&
	\left(\frac{c}{1-\mu}+\frac{1}{\mu}\right)\frac{1}{K}
	\\
	=
	~&
	\left(\frac{c}{1-\mu}+\frac{1}{\mu}\right)e^{-\sqrt{\frac{\log t}{a}}},
\end{split}
\end{equation*}
where in the last equality we have used the definition of $K=K(t)$.

On the other hand, if $K(t)<M$ then we can roughly estimate
\begin{equation*}
\begin{split}
	|\{u>t\}\cap Q_1|
	\leq
	1
	<
	\frac{M}{K(t)}
	=
	Me^{-\sqrt{\frac{\log t}{a}}}.
\end{split}
\end{equation*}

Finally, choosing $d=\max\{M,\frac{c}{1-\mu}+\frac{1}{\mu}\}\geq 1$, the result follows for every $t\geq 1$.
\end{proof}

We prove here the De Giorgi oscillation lemma. The lemma follows from the measure estimate in a straightforward manner. Harnack's inequality requires an additional argument that we postpone to the next section.

\begin{lemma}[De Giorgi oscillation lemma]
\label{DeGiorgi}
Given $\theta\in (0,1)$, there exist $\eps_0,\rho>0$ and $\eta=\eta(\theta)\in (0,1)$ such that if $u$ satisfies 
\begin{equation*}
	\begin{cases}
		\L_\eps^-u\leq \eta\rho & \text{ in } Q_{10\sqrt{N}},
		\\
		u\geq 0 & \text{ in } \R^N, 
	\end{cases}
\end{equation*}
for some $0<\eps<\eps_0$
and
\[
|Q_{1}\cap \{u> 1\}|\geq \theta,
\]
then
\[
\inf_{Q_3} u \geq \eta.
\]
\end{lemma}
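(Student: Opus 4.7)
My plan is to derive the lemma from the power decay estimate in Lemma~\ref{measure bound} by a simple scaling and contradiction argument. The constants $\eps_0$ and $\rho$ will be inherited directly from Lemma~\ref{measure bound} (i.e., ultimately from Lemma~\ref{first} and Lemma~\ref{second}); only $\eta=\eta(\theta)$ is new.

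Assume for contradiction that $\inf_{Q_3}u<\eta$, where $\eta\in(0,1)$ will be chosen at the end. Set
\[
\tilde u:\,=\frac{u}{\eta}.
\]
I would then check that $\tilde u$ satisfies the hypotheses of Lemma~\ref{measure bound}. Nonnegativity is immediate from $u\geq 0$. Positive homogeneity of $\L_\eps^-$ (which is clear from (\ref{L-eps-}), as both the infimum and the average distribute through multiplication by the positive constant $1/\eta$) gives
\[
\L_\eps^-\tilde u=\frac{1}{\eta}\,\L_\eps^-u\leq\rho \quad\text{in } Q_{10\sqrt{N}},
\]
using the hypothesis $\L_\eps^- u\leq\eta\rho$. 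Finally, the contradiction hypothesis $\inf_{Q_3}u<\eta$ yields $\inf_{Q_3}\tilde u<1$, in particular $\leq 1$.

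With the assumptions of Lemma~\ref{measure bound} verified, and keeping its constants $a>0$ and $d\geq 1$, I apply the estimate at the level $t=1/\eta\geq 1$ (ensured by $\eta<1$). Since $\{u>1\}=\{\tilde u>1/\eta\}$, this gives
\[
|Q_1\cap\{u>1\}|
=|Q_1\cap\{\tilde u>1/\eta\}|
\leq d\,e^{-\sqrt{(\log(1/\eta))/a}}.
\]
The right-hand side tends to $0$ as $\eta\to 0^+$, so I choose $\eta=\eta(\theta)\in(0,1)$ small enough that
\[
d\,e^{-\sqrt{(\log(1/\eta))/a}}<\theta.
\]
This contradicts the assumption $|Q_1\cap\{u>1\}|\geq\theta$, so the contrapositive forces $\inf_{Q_3}u\geq\eta$, completing the proof.

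The argument is routine once Lemma~\ref{measure bound} is in hand; the only step that needs a moment's care is the homogeneity of $\L_\eps^-$ used in the scaling, but this is transparent from the definition of the extremal operator as an infimum of symmetric averages plus a ball average. All other hypotheses rescale trivially because $\tilde u$ is merely a positive multiple of $u$ (no translation or dilation of the spatial variable is required), so no change of scale $\eps$ is involved.
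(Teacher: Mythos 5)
Your proof is correct, and it follows the paper's overall strategy (rescale $u$ by a positive constant and feed the result into the power decay estimate of Lemma~\ref{measure bound}). But your execution of the scaling step is actually cleaner than the paper's. The paper sets $m=\inf_{Q_3}u$ and rescales by dividing by $m$ (so $\tilde u = u/m$ with $\inf_{Q_3}\tilde u=1$), and then, in order to verify $\L_\eps^-\tilde u\le\rho$, it adds the side assumption ``$0<\eta\le m$, a constant to be chosen later.'' That side assumption is essentially the conclusion $\inf_{Q_3}u\ge\eta$ being sought, so as written the paper's argument only establishes the implication ``if $\eta\le m$ then $m\ge e^{-a(\log(d/\theta))^2}$,'' and the interesting case $\eta>m$ is what actually needs to be ruled out. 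Your version sidesteps this entirely by dividing by the yet-to-be-chosen constant $\eta$ rather than by the unknown quantity $m$: then $\L_\eps^-\tilde u=\L_\eps^-u/\eta\le\rho$ follows directly from the hypothesis $\L_\eps^-u\le\eta\rho$ with no extra assumption, and the contradiction hypothesis $\inf_{Q_3}u<\eta$ is exactly what gives $\inf_{Q_3}\tilde u\le 1$. The rest of the argument — applying Lemma~\ref{measure bound} at $t=1/\eta\ge 1$, identifying $\{u>1\}=\{\tilde u>1/\eta\}$, and choosing $\eta$ small enough that $d\,e^{-\sqrt{\log(1/\eta)/a}}<\theta$ — is exactly the same computation the paper performs, and solving the inequality recovers the paper's explicit choice $\eta=e^{-a(\log(d/\theta))^2}$ up to a harmless strict-inequality adjustment. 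So your proof buys a tighter logical structure at no cost; both approaches draw on the same lemma and produce the same constant.
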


\begin{proof}
We take $\eps_0,\rho>0$ given by Lemma~\ref{lem:main}.
Let $m=\displaystyle\inf_{Q_3}u$ for simplicity and define $\tilde u$ the rescaled version of $u$ given by
\begin{equation*}
	\tilde u(x)
	=
	\frac{u(x)}{m}
\end{equation*}
for every $x\in\R^N$. Then $\displaystyle\inf_{Q_3}\tilde u\leq 1$ and, by assumption,
\begin{equation*}
	|\{\tilde u>\frac{1}{m}\}\cap Q_1|
	=
	|\{u>1\}\cap Q_1|
	\geq
	\theta.
\end{equation*}

Now suppose that $\L_\eps^-u\leq\eta\rho$ where $0<\eta\leq m$ is a constant to be chosen later. Then
\begin{equation*}
	\L_\eps^-\tilde u
	=
	\frac{\L_\eps^- u}{m}
	\leq
	\frac{\eta\rho}{m}
	\leq
	\rho,
\end{equation*}
and recalling Lemma~\ref{measure bound} with $\tilde u$ and $t=\frac{1}{m}\geq 1$ (observe that in the case $m\geq 1$ we immediately get the result) we obtain
\begin{equation*}
	\theta
	\leq
	|\{\tilde u>\frac{1}{m}\}\cap Q_1|
	\leq
	de^{-\sqrt{\frac{\log\frac{1}{m}}{a}}}.
\end{equation*}
Rearranging terms we get
\begin{equation*}
	\inf_{Q_3}u
	=
	m
	\geq
	e^{-a\left(\log\frac{d}{\theta}\right)^2},
\end{equation*}
so choosing $\eta=\eta(\theta)=e^{-a\left(\log\frac{d}{\theta}\right)^2}\in(0,1)$ we finish the proof.
\end{proof}

Now we are in a position to state the H\"older estimate. The proof after obtaining the De Giorgi oscillation estimate is exactly as in \cite{arroyobp}.
The statement of the De Giorgi oscillation lemma here is different from the one  there. For the sake of completeness we prove that the statement here implies the one in \cite{arroyobp}.

\begin{lemma}
	\label{DeGiorgi-old}
	There exist $k>1$ and $C,\eps_0>0$ such that
	for every $R>0$ and $\eps<\eps_0R$, if $\L_\eps^+u\geq-\rho$ in $B_{kR}$ with $u\leq M$ in $B_{kR}$ and
	\[
	|B_{R}\cap \{u\leq m\}|\geq \theta |B_R|,
	\]
	for some $\rho>0$, $\theta\in (0,1)$ and $m,M\in\R$, then there exist $\eta=\eta(\theta)>0$ such that
	\[
	\sup_{B_R} u \leq (1-\eta)M+\eta m+ C R^2\rho .
	\]
\end{lemma}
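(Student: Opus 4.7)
The plan is to deduce Lemma~\ref{DeGiorgi-old} from the normalized De Giorgi estimate Lemma~\ref{DeGiorgi} by a flip--truncate--rescale argument. Assume $m<M$; otherwise $\sup_{B_R}u\leq M\leq (1-\eta)M+\eta m$ holds trivially for any $\eta\in(0,1)$.

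First, I would pass from the subsolution $u$ bounded above by $M$ on $B_{kR}$ to a globally nonnegative supersolution. Set $\tilde u:=\min(u,M)$ and $v:=(M-\tilde u)/(M-m)\geq 0$ in $\R^N$. Since $u\leq M$ on $B_{kR}$, we have $\tilde u=u$ there, and whenever $x\in B_{kR-\Lambda\eps}$ the ball $B_{\Lambda\eps}(x)$ lies inside $B_{kR}$; the definition of $\L_\eps^+$ therefore gives $\L_\eps^+\tilde u(x)=\L_\eps^+u(x)\geq -\rho$. Using $\delta(M-\tilde u)=-\delta\tilde u$ together with $\inf(-\phi)=-\sup\phi$, this becomes $\L_\eps^-v=-\L_\eps^+\tilde u/(M-m)\leq \rho/(M-m)$ on $B_{kR-\Lambda\eps}$, while the measure hypothesis $|B_R\cap\{u\leq m\}|\geq \theta|B_R|$ transforms into $|B_R\cap\{v\geq 1\}|\geq \theta|B_R|$.

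Next, rescale to unit scale. Let $\tilde v(y):=v(2Ry)$ and $\tilde\eps:=\eps/(2R)$. The scaling identity $\L_{\tilde\eps}^-\tilde v(y)=(2R)^2\L_\eps^-v(2Ry)$ yields $\L_{\tilde\eps}^-\tilde v\leq 4R^2\rho/(M-m)$ on $B_{(k-\Lambda\eps/R)/2}$, which contains $Q_{10\sqrt N}\subset B_{5N}$ provided $k\geq 10N+\Lambda$ and $\eps_0\leq 1$. A change of variables turns the measure condition into $|B_{1/2}\cap\{\tilde v\geq 1\}|\geq \theta|B_1|/2^N$, and since $B_{1/2}\subset Q_1$ we get $|Q_1\cap\{\tilde v\geq 1\}|\geq \theta':=\theta|B_1|/2^N$. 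The distinction between $\{\tilde v\geq 1\}$ and $\{\tilde v>1\}$ in Lemma~\ref{DeGiorgi} is harmless: replace $v$ by $v/(1-\sigma)$ for small $\sigma>0$ and let $\sigma\to 0^+$.

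Finally, invoke Lemma~\ref{DeGiorgi} with parameter $\theta'$, which produces $\eta=\eta(\theta')\in(0,1)$ together with universal constants $\tilde\eps_0,\tilde\rho>0$. Set $\eps_0:=\min\{2\tilde\eps_0,1\}$ so that $\tilde\eps<\tilde\eps_0$, and fix $k:=10N+\Lambda+1$. If $4R^2\rho/(M-m)\leq \eta\tilde\rho$, every hypothesis of Lemma~\ref{DeGiorgi} holds, giving $\inf_{Q_3}\tilde v\geq\eta$; unwinding the rescaling yields $v\geq\eta$ on $Q_{6R}\supset B_R$, hence $\sup_{B_R}u\leq (1-\eta)M+\eta m$. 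In the complementary regime $\eta(M-m)<4R^2\rho/\tilde\rho$, choosing $C:=4/\tilde\rho$ makes the claimed inequality automatic:
\[
(1-\eta)M+\eta m+CR^2\rho = M-\eta(M-m)+CR^2\rho \geq M \geq \sup_{B_R}u.
\]
The one technical point that requires care is that the truncation $\tilde u=\min(u,M)$ preserves the subsolution inequality only on the slightly smaller ball $B_{kR-\Lambda\eps}$, which is exactly what forces the buffer $k$ to be taken of order $N+\Lambda$; everything else is routine bookkeeping of constants through the rescaling.
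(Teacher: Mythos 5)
Your argument is correct and takes essentially the same route as the paper's proof: flip and rescale by $2R$, apply Lemma~\ref{DeGiorgi} to the resulting nonnegative function, and conclude via the dichotomy on whether $4R^2\rho/(M-m)$ is below the threshold $\eta\tilde\rho$, taking $C=4/\tilde\rho$. The only deviations are minor technical refinements rather than a different approach: you truncate $u$ at $M$ (hence the buffer $k=10N+\Lambda+1$ instead of the paper's $k=10N$) to secure global nonnegativity, use a multiplicative $\sigma$-dilation instead of the paper's additive $\gamma$ for the strict level-set inequality, and track the normalization constant $\theta'=\theta|B_1|/2^N$ explicitly.
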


\begin{proof}
	We can assume that $M>m$, given $\gamma>0$ we define
	\[
	\tilde u(x)=\frac{M-u(2Rx)}{M-m}+\gamma
	\]
	in $B_{k/2}$. 
	For $k=10N$ since $Q_{10\sqrt N}\subset B_{k/2}$ we get that $\tilde u$ is defined in $Q_{10\sqrt N}$.
	Since $u \leq M$ we get $\tilde u\geq 0$.
	Also, since $u\leq m$ implies $\tilde u>1$ we get
	\[
	|Q_{1}\cap \{u> 1\}|
	\geq |B_{1/2}\cap \{u> 1\}|
	\geq \frac{|B_{R}\cap \{u\leq m\}|}{|B_R|}
	\geq \theta.
	\]
	For $\tilde \eps =\frac{\eps}{2R}<\eps_0$, since $\L_\eps^+u\geq-\rho$, we get $\L_{\tilde\eps}^-\tilde u\leq \frac{4 R^2 \rho}{M-m}$.
	Therefore, Lemma~\ref{DeGiorgi} implies that there exists $\tilde \rho>0$ and $\tilde\eta=\tilde\eta(\theta)\in (0,1)$ such that if $\frac{4 R^2 \rho}{M-m}<\tilde \rho\tilde\eta$ we get
	\[
	\inf_{Q_3} \tilde  u \geq \tilde\eta.
	\]
	Then,
	\[
	\sup_{Q_{6R}} u
	\leq M(1-\tilde\eta+\gamma) + m(\tilde\eta +\gamma).
	\]
	Since $B_R\subset Q_{6R}$ and this holds for every $\gamma>0$, we get
	\[
	\sup_{B_R} u
	\leq M(1-\tilde\eta) + m \tilde\eta. 
	\]
	Finally we take $\eta=\tilde \eta$ and $C=\frac{4}{\tilde \rho}$. 
	Thus, if $\frac{4 R^2 \rho}{M-m}<\tilde \rho \tilde \eta$ the result immediately follows from above. And if $4 R^2 \rho\geq \tilde \rho\tilde\eta(M-m)$ we have
	\[
	\begin{split}
	\sup_{B_R} u
	&\leq M\\
	&= (1-\tilde\eta)M+\tilde\eta m+ \tilde\eta(M-m)\\
	&\leq (1-\tilde\eta)M+\tilde\eta m+ \frac{4 R^2 \rho}{\tilde \rho}\\
	&= (1-\eta)M+\eta m+ C R^2\rho. \qedhere
	\end{split}
	\]
\end{proof}

As we already mentioned, the H\"older estimate follows as in \cite{arroyobp}.

\begin{theorem}
\label{Holder}
There exists $\eps_0>0$ such that if $u$ satisfies $\L_\eps^+ u\ge -\rho$ and $\L_\eps^- u\le  \rho$ in $B_{R}$  where $\eps<\eps_0R$, there exist $C,\gamma>0$  such that
\[
|u(x)-u(z)|\leq \frac{C}{R^\gamma}\left(\sup_{B_{R}}|u|+R^2\rho\right)\Big(|x-z|^\gamma+\eps^\gamma\Big)
\]
for every $x, z\in B_{R/2}$.
\end{theorem}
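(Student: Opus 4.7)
The plan is a standard dyadic iteration of the oscillation estimate from Lemma~\ref{DeGiorgi-old}, terminated at the discrete scale $\eps$. Fix $x_0 \in B_{R/2}$ and, with $k>1$ the constant from Lemma~\ref{DeGiorgi-old}, define $r_j = R/(2k^j)$, $M_j = \sup_{B_{r_j}(x_0)} u$, $m_j = \inf_{B_{r_j}(x_0)} u$, and $\omega_j = M_j - m_j$. Since $|x_0| \leq R/2$, for every $j\geq 0$ we have $B_{kr_{j+1}}(x_0) = B_{r_j}(x_0) \subset B_R$, so the hypotheses $\L_\eps^\pm u$ from the theorem apply on these balls.

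For each $j$ with $r_j > \eps/\eps_0$ (so that $\eps < \eps_0 r_j$), consider the midpoint $\mu_j = (M_j + m_j)/2$. By a trivial dichotomy either
\[
|B_{r_{j+1}}(x_0) \cap \{u \leq \mu_j\}| \geq \tfrac{1}{2}|B_{r_{j+1}}|
\qquad\text{or}\qquad
|B_{r_{j+1}}(x_0) \cap \{u \geq \mu_j\}| \geq \tfrac{1}{2}|B_{r_{j+1}}|.
\]
In the first case Lemma~\ref{DeGiorgi-old} applied with $M=M_j$, $m=\mu_j$, $\theta=1/2$, and radius $r_{j+1}$ yields $M_{j+1} \leq (1-\eta)M_j + \eta\mu_j + Cr_{j+1}^2\rho$, i.e.\ $M_{j+1}-m_j \leq (1-\eta/2)\omega_j + Cr_{j+1}^2\rho$. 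In the second case the same lemma applied to $-u$ (which satisfies $\L_\eps^+(-u) = -\L_\eps^-u \geq -\rho$) gives $m_{j+1}-m_j \geq (\eta/2)\omega_j - Cr_{j+1}^2\rho$. Either way, with $\lambda = 1-\eta/2 < 1$ and $\eta = \eta(1/2)$ fixed,
\[
\omega_{j+1} \leq \lambda \omega_j + C r_{j+1}^2 \rho.
\]

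Choose $\gamma \in (0, \min\{2, \log(1/\lambda)/\log k\})$ so that $\lambda \leq k^{-\gamma}$. A routine induction (absorbing the $\rho$-term into a comparable series) gives
\[
\omega_j \leq C\left(\frac{r_j}{R}\right)^\gamma\!\left(\sup_{B_R}|u| + R^2\rho\right)
\qquad
\text{for every } j \text{ with } r_j > \eps/\eps_0.
\]
Now, given $x,z \in B_{R/2}$, set $x_0 = x$. If $|x-z| \geq \eps/\eps_0$, choose $j$ with $r_{j+1} \leq |x-z| < r_j$, so $z \in B_{r_j}(x)$ and $|u(x)-u(z)| \leq \omega_j \leq C R^{-\gamma}(\sup_{B_R}|u|+R^2\rho)\,|x-z|^\gamma$. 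If $|x-z| < \eps/\eps_0$, let $J$ be the largest index with $r_J > \eps/\eps_0$; then $r_J \leq k\eps/\eps_0$, $z \in B_{r_J}(x)$, and $|u(x)-u(z)| \leq \omega_J \leq C R^{-\gamma}(\sup_{B_R}|u|+R^2\rho)\,\eps^\gamma$. Summing both cases gives the claimed H\"older bound.

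The only subtlety is the passage through the discrete scale: beyond index $J$ the oscillation lemma is no longer available, but the iterated bound at $r_J\sim\eps$ is already of the desired order $\eps^\gamma$, which is exactly why the additional $\eps^\gamma$ term appears on the right-hand side. Everything else is mechanical, and indeed the argument coincides with the one in \cite{arroyobp} once Lemma~\ref{DeGiorgi-old} is in hand.
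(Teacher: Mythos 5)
Your proof is correct and is exactly the standard dyadic iteration that the paper has in mind when it says the H\"older estimate ``follows as in \cite{arroyobp}'' once Lemma~\ref{DeGiorgi-old} is available; the two-sided dichotomy at the midpoint level, the recursion $\omega_{j+1}\leq\lambda\omega_j+Cr_{j+1}^2\rho$, and the termination at the scale $\eps/\eps_0$ (which is what produces the extra $\eps^\gamma$ term) all match the intended argument. The only omissions are the trivial boundary cases $|x-z|\geq r_0=R/2$ and $\eps/\eps_0\geq R/2$, where one simply uses $|u(x)-u(z)|\leq 2\sup_{B_R}|u|$ together with $|x-z|/R$ or $\eps/(\eps_0 R)$ being bounded below by a fixed constant; note also that the sentence ``for each $j$ with $r_j>\eps/\eps_0$'' should read $r_{j+1}>\eps/\eps_0$ to license the application of Lemma~\ref{DeGiorgi-old} at radius $r_{j+1}$, though your final displayed conclusion on $\omega_j$ is stated with the correct index range.
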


\section{Harnack's inequality}

In this section we obtain an `asymptotic Harnack's inequality'.
First, we prove Lemma~\ref{lemma:apuja} that gives sufficient conditions to obtain the result.
One of the conditions of the lemma follows from Theorem~\ref{Holder} so then our task is to prove the other condition.

Before proceeding to the proof of the asymptotic Harnack we observe that the classical Harnack's inequality does not hold.

\begin{example}
\label{example}
Fix $\eps\in(0,1)$.
We consider $\Om=B_2\subset \R^N$ and $A=\{(x,0,\dots,0)\in \Om: x\in \eps\N\}$.
We define $\nu:\Omega\to\M(B_1)$ as
\begin{equation*}
\begin{split}
	&
	\nu_x(E)
	=
	\frac{|E\cap B_1|}{|B_1|}
	\qquad\text{ for } x\notin A,
	\\
	&
	\nu_x
	=
	\frac{\delta_{e_1}+\delta_{-e_1}}{2}
	\qquad\text{ for } x\in A,
\end{split}
\end{equation*}
where $e_1=(1,0,\dots,0)$. 
Now we construct a solution to the DPP $\L_\eps u=0$ in $\Omega$, we assume $\alpha>0$.
We define
\begin{equation*}
	u(x)
	=
	\begin{cases}
	a_k & \text{ if } x=(k\eps,0,\ldots,0), \ k\in\N, \\
	1 & \text{ otherwise,}
	\end{cases}
\end{equation*}
where $a_1=a>0$ is arbitrary and the rest of the  $a_k$'s are fixed so that $\L_\eps u(k\eps,0,\ldots,0)=0$ for each $k\in\N$.
 Observe that if $x\notin A$ then $\delta u(x,\eps y)=0$ a.e. $y\in B_1$ and thus
\begin{equation*}
	\L_\eps u(x)
	=
	\frac{1}{2\eps^2}\vint_{B_1}\delta u(x,\eps y)\, dy
	=
	0.
\end{equation*}
Otherwise, for $x=(k\eps,0,\ldots,0)$ we get
\begin{equation*}
\begin{split}
	\L_\eps u(x)
	=
	~&
	\frac{1}{2\eps^2}\left(\alpha\,\delta u(x,\eps e_1)+\beta\vint_{B_1} \delta u(x,\eps y)\,dy\right)
	\\
	=
	~&
	\frac{1}{\eps^2}\left(\alpha\,\frac{a_{k+1}+a_{k-1}}{2}+\beta-a_k\right).
\end{split}
\end{equation*}
Thus for the DPP to hold we must have
\begin{equation*}
	a_k
	=
	1-\alpha+\alpha\,\frac{a_{k-1}+a_{k+1}}{2}
\end{equation*}
for $k\in\N$ where we are denoting $a_0=1$.
Clearly this determines the values of the whole sequence, we explicitly calculate it.
Let $\varphi$ and $\bar \varphi$ be the solutions to the equation $x=\frac{\alpha}{2}(1+x^2)$, that is
\begin{equation*}
	\varphi
	=
	\frac{1+\sqrt{1-\alpha^2}}{\alpha}
	\quad\text{and}\quad
	\bar\varphi
	=
	\frac{1-\sqrt{1-\alpha^2}}{\alpha}.
\end{equation*}
Then 
\[
a_k=1+a\frac{\varphi^k-\bar\varphi^k}{\varphi-\bar\varphi}.
\]
Observe that $\inf_{B_1}u=1$ but $\sup_{B_1}u\geq a_1=  a$, so the Harnack inequality does not hold.
\end{example}

Let us observe that this does not contradict the H\"older estimate since $\sup_{B_{2}}|u|$ is large compared to $a$.

We begin the proof of the asymptotic Harnack inequality with the following lemma that gives sufficient conditions to obtain the result.
The lemma is a modification of Lemma 4.1 and Theorem 5.2 in \cite{luirops13}.
Our result, however, differs from the one there since, as observed above, in the present setting the classical Harnack's inequality does not hold.
The condition (ii) in Lemma 5.1 of \cite{luirops13} requires an estimate at level $\eps$ that we do not require here. 
Indeed, Example~\ref{example} shows that this condition does not necessarily hold in our setting.

\begin{lemma}\label{lemma:apuja}
Assume that $u$ is a positive function defined in $B_3\subset\R^n$ and there is $C\geq 1$, $\rho\geq0$ and $\eps>0$ such that
\begin{enumerate}
\item \label{item:for-harnack}  for some $\kappa,\lambda>0$,
\[
\inf_{B_r(x)}u\leq C\left(r^{-\lambda}\inf_{B_1}u+\rho\right)
\]
for every ${|x|\leq 2}$ and  $r\in (\kappa\varepsilon, 1)$,
\item  for some $\gamma>0$,
\label{item:holder} 
\begin{align*}
{\rm osc}\, (u,B_r(x))\leq C\left(\frac{r}{R}\right)^{\gamma} \left(\sup_{B_R(x)} u +R^2\rho\right)
\end{align*}
for every $|x|\leq 2$, $R\leq 1$ and $\varepsilon<r\leq\delta R$ with $\eps\kappa<R\delta$ where $\delta=(2^{1+2\lambda}C)^{-1/\gamma}$.

\end{enumerate}

Then 
\begin{equation*}
	\sup_{B_1}u
	\leq
	\tilde C\left(\inf_{B_1}u+\rho+\eps^{2\lambda}\sup_{B_3}u\right)
\end{equation*}
where $\tilde C=\tilde C(\kappa,\lambda,\gamma, C)=(2^{1+2\lambda}C)^{2\lambda/\gamma}\max(C2^{2+2\lambda},(2\kappa)^{2\lambda})$.
\end{lemma}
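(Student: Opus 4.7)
The proof is an iteration argument in the spirit of the one developed in \cite{luirops13}. The idea is to pick a point $x_0 \in B_1$ essentially realizing $\sup_{B_1} u$ and, starting from it, build a finite chain $x_0, x_1, \ldots, x_{k^*+1} \subset B_2$ along which $u$ grows by a fixed factor $A := 2^{1+2\lambda}$ per step. The chain terminates once the working scale $r_k$ drops below $\kappa\eps$, at which point the unboundedness suggested by the geometric growth is cut off by $\sup_{B_3}u$ and one solves for $u(x_0)$.

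To set up the induction, put $R_k := 2^{-k-1}$ and $r_k := \delta R_k$, where $\delta = (2^{1+2\lambda}C)^{-1/\gamma}$ is as in hypothesis (ii), and write $m := \inf_{B_1}u$. Given $x_k \in B_2$ with $r_k > \kappa\eps$, hypothesis (i) on $B_{r_k}(x_k)$ supplies $y_k \in B_{r_k}(x_k)$ with $u(y_k) \leq C(r_k^{-\lambda}m + \rho)$ (up to an arbitrarily small correction), and hypothesis (ii) with the pair $(r_k, R_k)$ (whose admissibility $\eps < r_k \leq \delta R_k$ and $\eps\kappa < R_k\delta$ is precisely the stopping condition $r_k > \kappa\eps$) bounds
\[
u(x_k) - u(y_k) \leq C\delta^\gamma\bigl(\sup\nolimits_{B_{R_k}(x_k)}u + R_k^2\rho\bigr) = A^{-1}\bigl(\sup\nolimits_{B_{R_k}(x_k)}u + R_k^2\rho\bigr),
\]
since by construction $C\delta^\gamma = A^{-1}$. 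Choosing $x_{k+1} \in B_{R_k}(x_k)$ nearly attaining the supremum yields the recursion
\[
u(x_{k+1}) \geq A\,u(x_k) - AC\,r_k^{-\lambda}m - (AC + R_k^2)\rho .
\]
Because $\sum_{j\geq 0} R_j = 1$, the point $x_{k+1}$ satisfies $|x_{k+1}| < |x_0| + \sum_{j\leq k} R_j < 2$, so the entire chain remains in $B_2 \subset B_3$.

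Let $k^*$ be the last index with $r_{k^*} > \kappa\eps$, so $2^{k^*+1} \asymp \delta/(\kappa\eps)$. Unrolling the recursion $k^*+1$ times yields
\[
u(x_{k^*+1}) \geq A^{k^*+1}u(x_0) - C_1 A^{k^*+1} m - C_2 A^{k^*+1} \rho,
\]
because both series $\sum_{j=0}^{k^*} A^{k^*-j}r_j^{-\lambda}$ and $\sum_{j=0}^{k^*} A^{k^*-j}$ are comparable to $A^{k^*}$ (the first since $r_j^{-\lambda}/A^{j} = \delta^{-\lambda}2^\lambda\cdot 2^{-(1+\lambda)j}$ is geometric). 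Using $u(x_{k^*+1}) \leq \sup_{B_3}u$ together with $A^{-(k^*+1)} \leq (2\kappa\eps/\delta)^{1+2\lambda} \leq (2\kappa/\delta)^{2\lambda}\eps^{2\lambda}$ in the non-trivial regime $2\kappa\eps \leq \delta$ (the complementary regime is dispatched by the trivial bound $\sup_{B_1}u \leq \sup_{B_3}u \leq (2\kappa/\delta)^{2\lambda}\eps^{2\lambda}\sup_{B_3}u$, which is absorbed into $\tilde C$ via the factor $(2\kappa)^{2\lambda}$), and sending $x_0$ to the supremum, one reads off the claim with $\tilde C$ as stated.

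The main obstacle is bookkeeping: one must verify that the errors accumulated on the right of the recursion, once discounted by the geometric weights $A^{k^*-j}$, add up to a contribution that is dominated by $C_1 m + C_2\rho$ and therefore does not spoil the leading term $A^{k^*+1}u(x_0)$. A secondary but important point is the case split on the size of $\eps$: the iterative bound $(2\kappa\eps/\delta)^{1+2\lambda}$ is only useful when $2\kappa\eps \leq \delta$, and the constant in the statement is deliberately chosen so that the opposite regime is trivially subsumed. Finally, one must check that at every step the admissibility conditions of (i) and (ii) hold simultaneously; this amounts exactly to $r_k > \kappa\eps$, while the choice $R_0 = 1/2$ keeps $x_k$ inside $B_2$ throughout.
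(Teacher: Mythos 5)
Your argument is correct and is, in essence, the same iteration that the paper uses: build a chain $x_0, x_1, \dots$ in $B_2$ along which $u$ grows geometrically, using hypothesis (ii) to climb from $u(x_k)$ to a nearby near-supremum $u(x_{k+1})$ and hypothesis (i) to control the infimum correction, then stop once the working radius $r_k$ hits the cut-off $\kappa\eps$ and close the estimate against $\sup_{B_3}u$. The paper runs this as a proof by contradiction (with explicit growth factors $M_k=4C(2^{-k}\delta)^{-2\lambda}$, giving an effective ratio $M_k/M_{k-1}=2^{2\lambda}$ after absorbing the infimum term into half of $u(x_k)$), whereas you unroll the recursion directly at rate $A=2^{1+2\lambda}$ and show the accumulated error terms sum to $O(A^{k^*})(m+\rho)$; these are two bookkeeping conventions for the same estimate, and both land on the $\eps^{2\lambda}$ power via $2^{-(k^*+1)}\lesssim\kappa\eps/\delta$ together with the case split $2\kappa\eps\lessgtr\delta$. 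One small point you (like the paper) leave implicit: the admissibility condition $\eps<r_k$ for hypothesis (ii) follows from $r_k>\kappa\eps$ only when $\kappa\ge 1$, which holds in the intended application but is not stated in the abstract lemma; this is a shared cosmetic gap, not an error in your argument.
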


\begin{proof}
We define $R_k=2^{1-k}$ and $M_k=4C(2^{-k}\delta)^{-2\lambda}$ for each $k=1,\ldots,k_0$, where $k_0=k_0(\eps)\in\N$ is fixed so that
\begin{equation*}
	2^{-(k_0+1)}
	\leq
	\frac{\kappa\eps}{2\delta}
	<
	2^{-k_0}.
\end{equation*}
Then
\begin{equation*}
	\eps^{2\lambda}
	\geq
	\left(\frac{\delta}{2\kappa}\right)^{2\lambda}\frac{M_1}{M_{k_0}}
\end{equation*}
and $\delta R_k\geq \delta R_{k_0}>\kappa\eps$.

We assume, for the sake of contradiction, that
\begin{equation*}
	\sup_{B_{1}}u
	>
	\tilde C\left(\inf_{B_1}u+\rho+\eps^{2\lambda}\sup_{B_3}u\right)
\end{equation*}
with
\begin{equation*}
	\tilde C
	=
	\max\left\{M_1,\left(\frac{2\kappa}{\delta}\right)^{2\lambda}\right\}.
\end{equation*}
We get
\begin{equation*}
	\sup_{B_1}u
	>
	M_1\left(\frac{1}{M_{k_0}}\sup_{B_3}u+\inf_{B_1}u+\rho\right).
\end{equation*}

We define $x_1=0$ and $x_2\in B_{R_1}(x_1)=B_1(0)$ such that 
\[
u(x_2)>M_1\left(\frac{1}{M_{k_0}}  \sup_{B_3}u + \inf_{B_1}u+\rho\right).
\]
We claim that we can construct a sequence $x_{k+1}\in B_{R_k}(x_k)$ such that 
\[
u(x_{k+1})>M_k\left(\frac{1}{M_{k_0}}  \sup_{B_3}u + \inf_{B_1}u+\rho\right).
\]
for $k=1,\dots,k_0$.

We proceed to prove this by induction, we fix $k$ and assume the hipotesis for the smaller values.
Since $\delta< 1$ we have $B_{\delta R_k}(x_k)\subset B_{R_k}(x_k)$.
Observe that $|x_k|\leq R_1+\cdots+R_{k-1}\leq 2$ and $1>\delta R_k>\kappa\eps$.
Then, by hypothesis (1) we get
\[
\begin{split}
\sup_{B_{R_{k}}(x_k)} u
&\geq C^{-1} \delta^{-\gamma} \left(\sup_{B_{\delta R_k}(x_k)}u - \inf_{B_{\delta R_k}(x_k)}u \right)-R_k^2\rho\\
&\geq C^{-1} \delta^{-\gamma} \left(u(x_k) - \inf_{B_{\delta R_k}(x_k)}u -C \delta^{\gamma}\rho\right).\\
\end{split}
\]
We apply hypothesis (2) for $B_{\delta R_k}(x_k)$, we get
\[
\begin{split}
\inf_{B_{\delta R_k}(x_k)}u+C \delta^{\gamma}\rho
&\leq  C(\delta R_k)^{-\lambda}\inf_{B_1}u+C\rho+C \delta^{\gamma}\rho\\
&<  2C(\delta R_k)^{-2\lambda}\inf_{B_1}u+\frac{M_{k-1}}{2}\rho\\
&= \frac{M_{k-1}}{2}\left(\inf_{B_1}u+\rho\right)\\
&< u(x_k)/2,\\
\end{split}
\]
where we have used that $C(1+\delta^\gamma)\leq 2C\leq M_1/2\leq M_{k-1}/2$
and the inductive hypothesis. 

Combining the last two inequalities we get
\[
\begin{split}
\sup_{B_{R_{k}}(x_k)} u
&> C^{-1} \delta^{-\gamma} \left(u(x_k) - u(x_k)/2\right)\\
&=C^{-1} \delta^{-\gamma} u(x_k)/2\\
&>C^{-1} \delta^{-\gamma} M_{k-1}/2\left(\frac{1}{M_{k_0}}  \sup_{B_3}u + \inf_{B_1}u+\rho\right)\\
&=M_k\left(\frac{1}{M_{k_0}}  \sup_{B_3}u + \inf_{B_1}u+\rho\right),
\end{split}
\]
where the last equality holds by the choice of $\delta$.
Then, we can choose $x_{k+1}\in B_{R_k}(x_k)$ such that 
\[
u(x_{k+1})>M_k\left(\frac{1}{M_{k_0}}  \sup_{B_3}u + \inf_{B_1}u+\rho\right).
\]

Therefore we get 
\[
u(x_{k_0+1})
>
\sup_{B_3}u + M_{k_0}\left( \inf_{B_1}u+\rho\right),
\]
which is a contradiction since $x_{k_0+1}\in B_2$.
\end{proof}

So, now our task is to prove that solutions to the DPP satisfy the hypothesis of the previous lemma. We start working towards condition (\ref{item:for-harnack}).

\begin{theorem}\label{thm.cond2}
There exists $C,\sigma,\eps_0>0$ such that if $u$ is a bounded measurable function satisfying
\begin{equation*}
	\begin{cases}
	\L_\eps^-u\leq 0 & \text{ in } B_7,
	\\
	u\geq 0 & \text{ in } \R^N,
	\end{cases}
\end{equation*}
for some $0<\eps\leq\eps_0$, then
\begin{equation*}
	\inf_{B_r(z)}u
	\leq
	Cr^{-2\sigma}\inf_{B_1}u
\end{equation*}
for every $z\in B_2$ and $r\in(\kappa\eps,1)$, where $\kappa=\Lambda\sqrt{2(\sigma+1)}$.
\end{theorem}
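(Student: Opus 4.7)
My plan is to extract a single-scale pointwise growth inequality from the measure-decay estimate of Lemma~\ref{measure bound} and then iterate it twice: first along a chain of cubes to connect $\inf_{B_1}u$ to $\inf_{Q_{\rho_0}(z)}u$ at a fixed intermediate scale $\rho_0$, and then downward in scale at the fixed center $z$ to reach $B_r(z)$.

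For the growth inequality I claim that whenever $w\in B_3$ and $\rho>0$ satisfy $Q_{10\sqrt N\rho}(w)\subset B_7$ and $\rho\ge \eps/\eps_0$, one has $\inf_{Q_\rho(w)}u\le C_0\inf_{Q_{3\rho}(w)}u$ for a fixed constant $C_0$. Assuming $\mu:=\inf_{Q_{3\rho}(w)}u>0$ (the case $\mu=0$ is recovered by perturbing $u$ to $u+\eta$ and letting $\eta\to 0^+$), the rescaled function $\hat u(y):=u(w+\rho y)/\mu$ satisfies $\hat u\ge 0$, $\inf_{Q_3}\hat u=1$, and a direct computation shows $\L_{\eps/\rho}^-\hat u(y)=(\rho^2/\mu)\,\L_\eps^-u(w+\rho y)\le 0$ in $Q_{10\sqrt N}$. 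Lemma~\ref{measure bound} applied to $\hat u$ yields $|\{\hat u>t\}\cap Q_1|\le d\,e^{-\sqrt{\log t/a}}$, and choosing $C_0$ large enough that this quantity is strictly less than $|Q_1|$ forces $\inf_{Q_1}\hat u\le C_0$, which rescales back to the claim.

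Next I fix a dimensional scale $\rho_0=1/(2N)$ so that $Q_{10\sqrt N\rho_0}(w)\subset B_7$ for every $w\in B_3$, set $m:=\inf_{B_1}u$, and pick a near-minimizer $x^*\in B_1$. Building a chain $x^*=x_0,\ldots,x_K=z\subset B_3$ with $|x_i-x_{i-1}|\le\rho_0$ and $K\le 3/\rho_0$, the inclusion $Q_{\rho_0}(x_{i-1})\subset Q_{3\rho_0}(x_i)$ combined with the growth inequality at $x_i$ gives $\inf_{Q_{\rho_0}(x_i)}u\le C_0\inf_{Q_{\rho_0}(x_{i-1})}u$; iterating along the chain yields $\inf_{Q_{\rho_0}(z)}u\le C_1 m$ with $C_1:=C_0^{3/\rho_0}$. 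I then iterate the growth inequality at the fixed center $z$ downward in scale, $\rho_j=3^{-j}\rho_0$, stopping at the first $k$ with $\rho_k\le r$: this gives $\inf_{Q_r(z)}u\le\inf_{Q_{\rho_k}(z)}u\le C_0^kC_1 m$, and $k\le\log_3(\rho_0/r)+1$ with $2\sigma:=\log_3 C_0$ produces $\inf_{Q_r(z)}u\le Cr^{-2\sigma}m$. The inclusion $Q_{2r/\sqrt N}(z)\subset B_r(z)$ then converts cubes into balls and yields $\inf_{B_r(z)}u\le\tilde C r^{-2\sigma}m$. The scale constraint $\rho_j\ge\eps/\eps_0$ at every step of the iteration is ensured by $r>\kappa\eps$; since the $\eps_0$ coming from the barrier in Lemma~\ref{barrier} scales as $1/(\Lambda\sqrt{\sigma+1})$, this matches the stated form $\kappa=\Lambda\sqrt{2(\sigma+1)}$ up to absorbable constants.

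The main technical point is the single-scale growth inequality: extracting a pointwise infimum bound from the purely measure-theoretic decay of Lemma~\ref{measure bound} hinges on normalizing $\hat u$ to have $\inf_{Q_3}\hat u=1$ exactly, and then choosing $C_0$ so that the super-level set $\{\hat u>C_0\}\cap Q_1$ has measure strictly smaller than $|Q_1|$; the remaining bookkeeping on constants through the rescalings to recover the precise $\kappa$ of the statement is routine.
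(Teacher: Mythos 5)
Your proof is correct in its essentials and takes a genuinely different route from the paper's. The paper proves Theorem~\ref{thm.cond2} by a direct barrier comparison: it builds an explicit radially decreasing subsolution $\Psi(x)=A|x-z|^{-2\sigma}-B$ in the annulus $B_4(z)\setminus\overline{B_r(z)}$, normalized to lie below $u$ on the two boundary shells, and then runs the $\eps$-ABP estimate on $v=\Psi-u$ to conclude $\Psi\leq u$ in the annulus; evaluating $\Psi$ on $B_1$ gives the bound. You instead extract a single-scale growth inequality $\inf_{Q_\rho(w)}u\leq C_0\inf_{Q_{3\rho}(w)}u$ directly from the power-decay estimate of Lemma~\ref{measure bound} (after rescaling so that $\inf_{Q_3}\hat u=1$, the superlevel set $\{\hat u>C_0\}\cap Q_1$ has measure strictly less than one, which forces $\inf_{Q_1}\hat u\leq C_0$), and then iterate: a chain of overlapping cubes at a fixed scale $\rho_0$ transports $\inf_{B_1}u$ to $z$, and a dyadic-type descent in scale at the fixed center $z$ brings the estimate down to radius $r$. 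This is essentially the classical Krylov--Safonov ``measure-to-pointwise'' growth-lemma iteration; it reuses the machinery already established in Section~4 and avoids constructing a second, annular barrier and re-running ABP on it, whereas the paper's argument is more direct and gives a cleaner handle on the exponent $\sigma$ (it is exactly the barrier exponent). Two small points worth tightening in your write-up: (i) the descent should stop at the first $k$ with $\rho_k\leq 2r/\sqrt N$ rather than $\rho_k\leq r$, since $Q_r(z)\not\subset B_r(z)$ for $N\geq 5$, so the ball-to-cube reduction must be done before fixing the stopping index; (ii) the $\kappa$ your iteration actually produces is of order $\sqrt N/\eps_0^{\mathrm{Lem}}$ (with $\eps_0^{\mathrm{Lem}}$ the threshold from Lemma~\ref{measure bound}), which need not equal $\Lambda\sqrt{2(\sigma+1)}$ for $\sigma=\tfrac12\log_3 C_0$; this is harmless because increasing $\sigma$ only weakens $r^{-2\sigma}$ on $(0,1)$, so one may simply enlarge $\sigma$ until $\Lambda\sqrt{2(\sigma+1)}$ dominates whatever $\kappa$ the iteration requires, but it should be said explicitly.
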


\begin{proof}
Let $\Omega=B_4(z)\setminus\overline{B_r(z)}$. Our aim is to construct a subsolution $\Psi$ in the $\Lambda\eps$-neighborhood of $\Omega$, ie. in $\widetilde\Omega=B_{4+\Lambda\eps}(z)\setminus \overline{B_{r-\Lambda\eps}(z)}$, such that $\Psi\leq u$ in $\widetilde\Omega$.

Let $\Psi:\R^N\setminus\{0\}\to\R$ be the smooth function defined by
\begin{equation*}
	\Psi(x)
	=
	A|x-z|^{-2\sigma}-B
\end{equation*}
for certain $A,B,\sigma>0$, which is a radially decreasing function. The constants $A$ and $B$ are fixed in such a way that $\Psi\leq u$ in $\widetilde\Omega\setminus\Omega$, that is both in $\overline{B_r(z)}\setminus\overline{B_{r-\Lambda\eps}(z)}$ and $B_{4+\Lambda\eps}(z)\setminus B_4(z)$. More precisely, requiring
\begin{equation*}
	\Psi\big|_{\partial B_{r-\Lambda\eps}(z)}
	=
	\inf_{B_r(z)}u
	\qquad\text{ and }\qquad
	\Psi\big|_{\partial B_4(z)}
	=
	0,
\end{equation*}
and since $\Psi$ is radially decreasing, we obtain that $\Psi\leq u$ in $\widetilde\Omega\setminus\Omega$. Therefore these conditions determine $A$ and $B$ so that
\begin{equation*}
	\Psi(x)
	=
	\frac{|x-z|^{-2\sigma}-4^{-2\sigma}}{(r-\Lambda \eps)^{-2\sigma}-4^{-2\sigma}}\inf_{B_r}u.
\end{equation*}
Let us assume for the moment that $z=0$ and $x=(|x|,0\ldots,0)$. Similarly as in the proof of Lemma~\ref{barrier}, using \eqref{ineq:abc} we can estimate
\begin{equation*}
	\delta\Psi(x,\eps y)
	\geq
	2\eps^2 A\sigma|x|^{-2\sigma-2}\left[-\Lambda^2+2(\sigma+1)\left(1-(\sigma+2)\frac{\Lambda^2\eps^2}{r^2}\right)y_1^2\right]
\end{equation*}
for every $|x|>r>\Lambda\eps$ and $|y|<\Lambda$ (so that $|x+\eps y|>0$ and thus $\delta\Psi(x,\eps y)$ is well defined). Moreover, since $r\in(\kappa\eps,1)$ we get
\begin{equation*}
	1-(\sigma+2)\frac{\Lambda^2\eps^2}{r^2}
	\geq
	1-(\sigma+2)\frac{\Lambda^2}{\kappa^2}
	=
	\frac{1}{2},
\end{equation*}
where the equality holds for
\begin{equation*}
	\kappa
	=
	\Lambda\sqrt{2(\sigma+2)}
	\geq
	2\Lambda.
\end{equation*}
This also sets out an upper bound for $\eps$: the inequality $\kappa\eps<1$ is satisfied for every $0<\eps\leq\eps_0$ with $\eps_0<\frac{1}{\Lambda\sqrt{2(\sigma+2)}}$. Then
\begin{equation*}
	\delta\Psi(x,\eps y)
	\geq
	2\eps^2 A\sigma|x|^{-2\sigma-2}\left[-\Lambda^2+(\sigma+1)y_1^2\right]
\end{equation*}
for every $|x|>r>\Lambda\eps$ and $|y|<\Lambda$. Hence
\begin{equation*}
	\inf_{z\in B_\Lambda}\delta\Psi(x,\eps z)
	\geq
	2\eps^2 A\sigma|x|^{-2\sigma-2}\left[-\Lambda^2\right]
\end{equation*}
and
\begin{equation*}
	\vint_{B_1}\delta\Psi(x,\eps y)\,dy
	\geq
	2\eps^2 A\sigma|x|^{-2\sigma-2}\left[-\Lambda^2+\frac{\sigma+1}{N+2}\right],
\end{equation*}
so
\begin{equation*}
	\L_\eps^-\Psi(x)
	\geq
	A\sigma|x|^{-2\sigma-2}\left[-\Lambda^2+\beta\frac{\sigma+1}{N+2}\right]
	=\,:
	-\psi(x)
\end{equation*}
for every $|x|>r>\Lambda\eps$. Choosing large enough $\sigma$ depending on $N$, $\beta$ and $\Lambda$ we get that $\psi\leq 0$ for every $|x|>\Lambda\eps$.

Summarizing, since $\Omega=B_4(z)\setminus\overline{B_r(z)}$ with $r>\kappa\eps\geq 2\Lambda\eps$, we obtain
\begin{equation*}
	\begin{cases}
	\L_\eps^-\Psi\geq-\psi
	& \text{ in } \Omega,
	\\
	\Psi\leq u & \text{ in } \widetilde\Omega\setminus\Omega.
	\end{cases}
\end{equation*}
In what follows we recall the $\eps$-ABP estimate to show that the inequality $\Psi\leq u$ is satisfied also in $\Omega$. But before, as in the proof of Lemma~\ref{first}, we define $v=\Psi-u$ and since by assumption $\L_\eps^-u\leq 0$ in $\Omega=B_4(z)\setminus \overline{B_r(z)}\subset B_7$, we have
\begin{equation*}
	\L_\eps^+v
	\geq
	\L_\eps^-\Psi-\L_\eps^-u
	\geq
	-\psi
\end{equation*}
in $\Omega$. Thus
\begin{equation*}
	\begin{cases}
	\L_\eps^+v+\psi\geq 0 & \text{ in } \Omega,
	\\
	v\leq 0 & \text{ in } \widetilde\Omega\setminus\Omega.
	\end{cases}
\end{equation*}
By the $\eps$-ABP estimate (see Theorem~4.1 together with Remark~7.4 both from \cite{arroyobp}), 
\begin{equation*}
	\sup_\Omega v
	\leq
	\sup_{\widetilde\Omega\setminus\Omega}v
	+
	C\bigg(\sum_{Q\in\mathcal{Q}_\eps(K_v)}\Big(\sup_Q\psi^+\Big)^N|Q|\bigg)^{1/N},
\end{equation*}
where $K_v\subset\Omega$ stands for the contact set of $v$ in $\Omega$ and $\mathcal{Q}_\eps(K_v)$ is a family of disjoint cubes $Q$ of diameter $\eps/4$ such that $\overline Q\cap K_v\neq\emptyset$, so that $Q\subset\widetilde\Omega$. Since $v\leq 0$ in $\widetilde\Omega\setminus\Omega$ and $\psi\leq0$, we obtain that $v\leq 0$ in $\Omega$, that is, $\Psi\leq u$ in $\Omega$. In consequence,
\begin{equation*}
\begin{split}
	\inf_{B_1}u
	\geq
	\inf_{B_1}\Psi
	=
	~&
	\frac{3^{-2\sigma}-4^{-2\sigma}}{(r-\Lambda\eps)^{-2\sigma}-4^{-2\sigma}}\inf_{B_r(z)}u
	\\
	\geq
	~&
	(3^{-2\sigma}-4^{-2\sigma})(r-\Lambda\eps)^{2\sigma}\inf_{B_r(z)}u
	\\
	\geq
	~&
	(3^{-2\sigma}-4^{-2\sigma})\left(\frac{r}{2}\right)^{2\sigma}\inf_{B_r(z)}u
\end{split}
\end{equation*}
for every $z\in B_2$, where we have used $r>\kappa\eps\geq2\Lambda\eps$ so that $r-\Lambda\eps>\frac{r}{2}$, so the proof is finished. \qedhere
\end{proof}

Now we prove that condition (\ref{item:for-harnack}) in Lemma~\ref{lemma:apuja} holds in the desired setting.

\begin{corollary}\label{coro:cond2}
There exists $C,\sigma,\eps_0>0$ such that if $\rho\geq 0$ and $u$ is a bounded measurable function satisfying
\begin{equation*}
	\begin{cases}
	\L_\eps^-u\leq\rho & \text{ in } B_7,
	\\
	u\geq 0 & \text{ in } \R^N,
	\end{cases}
\end{equation*}
for some $0<\eps\leq\eps_0$, then
\begin{equation*}
	\inf_{B_r(z)}u
	\leq
	C\Big(r^{-2\sigma}\inf_{B_1}u+\rho\Big)
\end{equation*}
for every $z\in B_2$ and $r\in(\kappa\eps,1)$, where $\kappa=\Lambda\sqrt{2(\sigma+1)}$.
\end{corollary}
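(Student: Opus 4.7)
My plan is to deduce the corollary from Theorem~\ref{thm.cond2} (the case $\rho=0$) by means of a quadratic perturbation that absorbs the right-hand side $\rho$. Set
\[
\phi(x):=\tfrac{N+2}{\beta}\bigl(R^{2}-|x|^{2}\bigr)
\]
for a suitable radius $R$ (say $R=7$). A direct computation of second differences yields $\delta\phi(x,\eps y)=-\tfrac{2\eps^{2}(N+2)}{\beta}|y|^{2}$, and substituting this into \eqref{L-eps+} gives $\L_\eps^{+}\phi\equiv-1$ pointwise (the $\alpha$-term vanishes at $z=0$ and the $\beta$-term contributes $-1$ after averaging over $B_{1}$).

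Next I would put $w:=u+\rho\phi$ and exploit the ``subadditivity-type'' inequality $\L_\eps^{-}(f+g)\le\L_\eps^{-}f+\L_\eps^{+}g$, which follows from $\inf_{z}(\delta f+\delta g)\le\inf_{z}\delta f+\sup_{z}\delta g$. Applied to $f=u$ and $g=\rho\phi$ this gives
\[
\L_\eps^{-}w\le\L_\eps^{-}u+\rho\,\L_\eps^{+}\phi\le\rho-\rho=0 \quad\text{in } B_{7}.
\]
Because $\phi\ge 0$ on $\overline{B_{R}}=\overline{B_{7}}$, we also have $w\ge u\ge0$ on $B_{7}$, so $w$ satisfies the hypotheses of Theorem~\ref{thm.cond2} (on the set where its proof actually needs them: an inspection shows that positivity is used only on the outer shell $B_{4+\Lambda\eps}(z)\setminus B_{4}(z)\subset B_{7}$ for $z\in B_{2}$ and $\eps<\eps_{0}$ small). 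Applying Theorem~\ref{thm.cond2} to $w$ then yields
\[
\inf_{B_{r}(z)}w\le Cr^{-2\sigma}\inf_{B_{1}}w.
\]

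To finish, I would translate this back to $u$. Since $\phi\ge0$ on $B_{r}(z)\subset B_{3}\subset B_{7}$, $u\le w$ there, so $\inf_{B_{r}(z)}u\le\inf_{B_{r}(z)}w$; and on $B_{1}$, $\phi$ is bounded by $c_{0}:=\tfrac{49(N+2)}{\beta}$, giving $\inf_{B_{1}}w\le\inf_{B_{1}}u+c_{0}\rho$. Combining these with the estimate for $w$ yields the stated bound (after absorbing the dimensional constant $c_{0}$ into $C$). The main obstacle is the mild mismatch between the hypothesis of Theorem~\ref{thm.cond2}, which asks $u\ge0$ throughout $\R^{N}$, and the constructed $w$, which is only guaranteed positive on $B_{7}$; this is resolved either by re-reading the proof of Theorem~\ref{thm.cond2} to confirm that its barrier/ABP comparison uses only boundary positivity on the compact shell $\widetilde\Omega\setminus\Omega\subset B_{7}$, or, if preferred, by taking $R$ substantially larger than $7$ so that $w$ is positive on a ball big enough to engulf every region sampled by $\L_\eps^{\pm}$ during the proof.
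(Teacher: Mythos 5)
Your overall strategy---perturb $u$ by a multiple of $\rho$ times a fixed quadratic so that $\L_\eps^-$ of the perturbation compensates the right-hand side, then invoke Theorem~\ref{thm.cond2}---is the same as the paper's. However, you chose the \emph{wrong sign} for the perturbation, and this produces a bound strictly weaker than the one claimed. You add $\rho\phi$ with $\phi\ge0$ on $B_7$, so that $w=u+\rho\phi\ge u$. Then $\inf_{B_r(z)}u\le\inf_{B_r(z)}w$ comes for free, but on the other side you only get $\inf_{B_1}w\le\inf_{B_1}u+c_0\rho$ with $c_0=\sup_{B_1}\phi$. Feeding this into Theorem~\ref{thm.cond2} yields
\[
\inf_{B_r(z)}u\le Cr^{-2\sigma}\bigl(\inf_{B_1}u+c_0\rho\bigr)
=Cr^{-2\sigma}\inf_{B_1}u+Cc_0\,\rho\,r^{-2\sigma},
\]
and the extra factor $r^{-2\sigma}$ on the $\rho$-term \emph{cannot} be ``absorbed into $C$'': $r$ ranges down to $\kappa\eps$, so $r^{-2\sigma}$ is unbounded as $\eps\to0$. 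Thus you have not obtained $C\bigl(r^{-2\sigma}\inf_{B_1}u+\rho\bigr)$; the last sentence of your proof, claiming the stated bound follows, is where the gap lies.

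The paper perturbs in the opposite direction: it sets $\tilde u=u-A\rho|x|^2$ with $A$ a fixed large constant. Because the quadratic is \emph{subtracted}, $\tilde u\le u$ everywhere, hence $\inf_{B_1}\tilde u\le\inf_{B_1}u$ with \emph{no} additive error that could be amplified by $r^{-2\sigma}$. The $\rho$-dependent error then lands on the $B_r(z)$ side: since $B_r(z)\subset B_3$, one has $\tilde u\ge u-9A\rho$ there, so $\inf_{B_r(z)}u\le\inf_{B_r(z)}\tilde u+9A\rho\le Cr^{-2\sigma}\inf_{B_1}u+9A\rho$, giving exactly the stated estimate. So the key point you missed is that the error must be introduced on the side that is \emph{not} multiplied by $r^{-2\sigma}$; this forces the perturbation to be subtracted, not added.

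Two further remarks. (i) Your concern about positivity is legitimate, and in fact the paper's $\tilde u=u-A\rho|x|^2$ is not even nonnegative on $B_7$, so (as you observe for $w$) what is really used is that Theorem~\ref{thm.cond2} only needs positivity on the shell $\widetilde\Omega\setminus\Omega$ together with $\inf_{B_r(z)}\tilde u\ge 0$; the latter can be assumed without loss of generality, for otherwise $\inf_{B_r(z)}u\le 9A\rho$ and the corollary is trivial. (ii) A minor arithmetic slip: with $\phi=\tfrac{N+2}{\beta}(R^2-|x|^2)$ one gets $\vint_{B_1}|y|^2\,dy=\tfrac{N}{N+2}$ and hence $\L_\eps^+\phi\equiv -N$, not $-1$; this is harmless for the inequality $\L_\eps^-w\le 0$ but worth noting.
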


\begin{proof}
We consider $\tilde u(x)=u(x)-A\rho|x|^2$, where $A>0$ is a constant to be fixed later. Then
\begin{equation*}
	\delta\tilde u(x,\eps y)
	=
	\delta u(x,\eps y)-2\eps^2A\rho|y|^2
	\leq
	\delta u(x,\eps y),
\end{equation*}
so 
\begin{equation*}
	\inf_{z\in B_\Lambda}\delta\tilde u(x,\eps z)
	\leq
	\inf_{z\in B_\Lambda}\delta u(x,\eps z)
\end{equation*}
and
\begin{equation*}
\begin{split}
	\vint_{B_1}\delta\tilde u(x,\eps y)\,dy
	=
	~&
	\vint_{B_1}\delta u(x,\eps y)\,dy
	-
	2\eps^2A\rho\,\frac{N}{N+2},
\end{split}
\end{equation*}
where we have used that $\vint_{B_1}|y|^2\,dy=\frac{N}{N+2}$. Therefore,
\begin{equation*}
	\L_\eps^-\tilde u
	\leq
	\L_\eps^-u-A\rho\beta \,\frac{N}{N+2}
	\leq
	\left(1-A\beta \,\frac{N}{N+2}\right)\rho
	\leq
	0,
\end{equation*}
where the last inequality holds for a sufficiently large choice of $A$.

Therefore we can apply Theorem~\ref{thm.cond2} to $\tilde u$. Observe first that since $r\in(\kappa\eps,1)$ and $z\in B_2$ then $B_r(z)\subset B_3$. Thus $\tilde u\geq u-9A\rho$ in $B_r(z)$ and
\begin{equation*}
	\inf_{B_r(z)}u-9A\rho
	\leq
	\inf_{B_r(z)}\tilde u
	\leq
	Cr^{-2\sigma}\inf_{B_1}\tilde u
	\leq
	Cr^{-2\sigma}\inf_{B_1}u
\end{equation*}
and the result follows.
\end{proof}

Now we are ready to state the main result of the section.
\begin{theorem}
\label{Harnack}
There exists $C,\lambda,\eps_0>0$ such that if $u\geq 0$ in $\R^N$ is a bounded and measurable function satisfying $\L^+_\eps u\geq-\rho$ and $\L^-_\eps u\leq\rho$ in $B_7$ for some $0<\eps<\eps_0$, then
\begin{equation*}
	\sup_{B_1}u
	\leq
	C\left(\inf_{B_1}u+\rho+\eps^{2\lambda}\sup_{B_3}u\right).
\end{equation*}
\end{theorem}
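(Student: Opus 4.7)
The plan is to derive Theorem~\ref{Harnack} as a direct consequence of Lemma~\ref{lemma:apuja}, whose two hypotheses have essentially been established already. Condition~(\ref{item:for-harnack}) is supplied verbatim by Corollary~\ref{coro:cond2}: with $\lambda := 2\sigma$ and $\kappa = \Lambda\sqrt{2(\sigma+1)}$, the corollary yields
\[
\inf_{B_r(z)}u \leq C\left(r^{-2\sigma}\inf_{B_1}u + \rho\right)
\]
for every $z\in B_2$ and $r\in(\kappa\eps,1)$, using that $\L_\eps^-u\leq\rho$ in $B_7\supset B_3\supset B_r(z)$.

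For condition~(\ref{item:holder}), I would invoke Theorem~\ref{Holder} on each ball $B_R(x)$ with $|x|\leq 2$ and $R\leq 1$; since $B_R(x)\subset B_3\subset B_7$, both operator inequalities hold there and the theorem gives
\[
|u(y)-u(z)| \leq \frac{C}{R^\gamma}\left(\sup_{B_R(x)}u + R^2\rho\right)\left(|y-z|^\gamma + \eps^\gamma\right)
\]
for $y,z\in B_{R/2}(x)$. Since condition~(\ref{item:holder}) of Lemma~\ref{lemma:apuja} is asked only for $r>\eps$, the $\eps^\gamma$ term is absorbed into $r^\gamma$, and taking the supremum over $y,z\in B_r(x)$ recovers the required oscillation bound (possibly with a larger constant). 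Note that $u\geq 0$ allows us to replace $|u|$ by $u$ freely.

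With both hypotheses verified, I would then simply apply Lemma~\ref{lemma:apuja}, choosing $\eps_0$ small enough so that the thresholds in Theorem~\ref{Holder} and Corollary~\ref{coro:cond2} are met simultaneously, and so that $\eps_0\kappa < \delta$ with $\delta = (2^{1+2\lambda}C)^{-1/\gamma}$ as prescribed in the lemma. The conclusion is
\[
\sup_{B_1}u \leq \tilde C\left(\inf_{B_1}u + \rho + \eps^{2\lambda}\sup_{B_3}u\right),
\]
which is exactly the asymptotic Harnack inequality with $\lambda = 2\sigma$. The positivity hypothesis in Lemma~\ref{lemma:apuja} causes no trouble: if $\inf_{B_1}u = 0$ one may apply the lemma to $u+\delta$ (the Pucci-type inequalities are unchanged under addition of constants) and pass to the limit $\delta\searrow 0$.

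Honestly, the main obstacle is not in this final deduction but in the already-established machinery: the De~Giorgi oscillation estimate underlying Theorem~\ref{Holder}, and the barrier construction plus $\eps$-ABP argument underlying Theorem~\ref{thm.cond2} and its corollary. What remains in the proof of Theorem~\ref{Harnack} is essentially a matter of parameter bookkeeping; the only point requiring mild attention is the compatibility of $\delta$ (which depends on $\lambda$ and $\gamma$) with the threshold $\eps_0$, ensuring that Lemma~\ref{lemma:apuja}'s hypotheses hold for all admissible scales $r\in(\kappa\eps, \delta R)$.
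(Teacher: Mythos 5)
Your overall structure matches the paper's: verify condition~(\ref{item:for-harnack}) of Lemma~\ref{lemma:apuja} via Corollary~\ref{coro:cond2} with $\lambda=2\sigma$, verify condition~(\ref{item:holder}) via Theorem~\ref{Holder} after absorbing $\eps^\gamma$ into $r^\gamma$, and then apply Lemma~\ref{lemma:apuja}. What is missing, however, is precisely the one nontrivial bit of bookkeeping that remains, and your proposed fix does not work.

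Condition~(\ref{item:holder}) must hold for every $|x|\le 2$, $R\le 1$ and $\eps<r\le\delta R$, under the restriction $\eps\kappa<\delta R$. What Theorem~\ref{Holder} provides, after taking supremum/infimum over pairs of points, is the oscillation bound for $r<R/2$ and $\eps<\eps_0^{\mathrm{H}}R$, where $\eps_0^{\mathrm{H}}$ is the threshold from Theorem~\ref{Holder}, a fixed structural constant (depending only on $\Lambda$, $\alpha$, $\beta$, $N$, not on $\gamma$). Matching the ranges forces $\delta\le 1/2$ and $\delta/\kappa\le\eps_0^{\mathrm{H}}$. These are constraints on $\delta=(2^{1+2\lambda}C)^{-1/\gamma}$, and they cannot be met by shrinking the Harnack threshold $\eps_0$: as $R\to 0$, the admissible range of $\eps$ scales linearly with $R$, and the comparison of $\delta/\kappa$ with $\eps_0^{\mathrm{H}}$ is a scale-free condition unaffected by how small you take the Harnack $\eps_0$. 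You also wrote the inequality backwards ($\eps_0\kappa<\delta$ rather than $\delta/\kappa<\eps_0^{\mathrm{H}}$).

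The correct lever, and the one the paper pulls, is the H\"older exponent $\gamma$. Since $\lambda=2\sigma$, $\kappa$, $C$ and $\eps_0^{\mathrm{H}}$ are all independent of $\gamma$, while $\delta=(2^{1+2\lambda}C)^{-1/\gamma}\to 0$ as $\gamma\to 0^{+}$, one may simply shrink $\gamma$ until $\delta<1/2$ and $\delta/\kappa<\eps_0^{\mathrm{H}}$. This is legitimate because if Theorem~\ref{Holder} holds with some exponent $\gamma$, it holds with the same constants for every smaller exponent (distances under consideration are below~1). With that modification your argument is complete. The $u+\delta$ regularization to handle $\inf_{B_1}u=0$ is harmless but unnecessary, since nothing in Lemma~\ref{lemma:apuja} or Corollary~\ref{coro:cond2} actually breaks when $\inf_{B_1}u=0$.
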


\begin{proof}
By Corollary~\ref{coro:cond2} we have that $u$ satisfies condition (\ref{item:for-harnack}) in Lemma~\ref{lemma:apuja} for $\lambda=2\sigma$. 
We deduce condition (\ref{item:holder}) by taking infimum over $x,z\in B_r$ in the inequality given by Theorem~\ref{Holder}. 
We use $\varepsilon<r$ to bound $\varepsilon^\gamma<r^\gamma$.
In this way, we obtained the inequality for every $r<R/2$ and $\eps<\eps_0 R$.
We need it to hold for every $r\leq \delta R$ and $\eps<\frac{\delta}{\kappa}R$.
Therefore we have proved the result if $\delta<1/2$ and $\frac{\delta}{\kappa}<\eps_0$.
That is we have obtained the result as long as $\delta$ is small enough.
Recall that $\delta=(2^{1+2\lambda}C)^{-1/\gamma}$.
Then, it is enough to take $\gamma>0$ small enough.
We can do this since $\eps_0$, $C$, $\kappa$ and $\lambda$ only depend on $\Lambda$, $\alpha$, $\beta$ and the dimension $N$, and not on $\gamma$.
Also if Theorem~\ref{Holder} holds for a certain $\gamma>0$ it also holds with the same constants for every smaller $\gamma>0$.
\end{proof}

\begin{remark}
\label{harnack:limit}
Let $\{u_\eps\,:\,0<\eps<\eps_0\}$ be a family of nonnegative measurable solutions to the DDP with $f=0$. In view of Theorem~\ref{Holder} together with the asymptotic Arzel\'a-Ascoli theorem \cite[Lemma 4.2]{manfredipr12}, we can assume that $u_\eps\to u$ uniformly in $B_2$ as $\eps\to 0$. Then by taking the limit in the asymptotic Harnack inequality
\begin{equation*}
	\sup_{B_1}u_\eps
	\leq
	C\left(\inf_{B_1}u_\eps+\eps^{2\lambda}\sup_{B_3}u_\eps\right),
\end{equation*}
we obtain the classical inequality for the limit, that is
\begin{equation*}
	\sup_{B_1}u
	\leq
	C\inf_{B_1}u.
\end{equation*}
Similarly if $\{u_\eps\,:\,0<\eps<\eps_0\}$ is a uniformly convergent family of nonnegative measurable  functions such that $\L_\eps^+ u_{\eps}\ge -\rho$ and $\L_\eps^- u_{\eps} \le  \rho$, then for the limit we get 
\begin{align*}
\sup_{B_1}u
	\leq
	C(\inf_{B_1}u+\rho).
\end{align*}
\end{remark}

\def\cprime{$'$} \def\cprime{$'$} \def\cprime{$'$}


\begin{thebibliography}{PSSW09}

\bibitem[ABP]{arroyobp}
{\'A}.~Arroyo, P.~Blanc, and M.~Parviainen.
\newblock H\"older regularity for stochastic processes with bounded and
  measurable increments.
\newblock {\em Ann.\ Inst.\ H.\ Poincar\'e Anal.\ Non Lin\'eaire} (2022), published online first, 
\newblock {\em https://doi.org/10.4171/aihpc/41}.

\bibitem[AP20]{arroyop20}
\'{A}. Arroyo and M.~Parviainen.
\newblock Asymptotic {H}\"{o}lder regularity for the ellipsoid process.
\newblock {\em ESAIM Control Optim. Calc. Var.}, 26(112):Paper No. 112, pages 31, 2020.

\bibitem[BR19]{blancr19}
P.~Blanc and J.~D. Rossi.
\newblock {\em Game Theory and Partial Differential Equations}.
\newblock De Gruyter, 2019.


\bibitem[BR19b]{blancr19b}
P.~Blanc and J.~D. Rossi.
\newblock Games for eigenvalues of the {H}essian and concave/convex envelopes.
\newblock {\em J. Math. Pures Appl. (9)}, 127:192--215, 2019.

\bibitem[BLM20]{brustadlm20}
K.~K Brustad, P.~Lindqvist, and J.~J. Manfredi.
\newblock A discrete stochastic interpretation of the dominative $p
  $-{L}aplacian.
\newblock {\em  Differential Integral Equations},  33(9-10): 465--488, 2020.


\bibitem[Caf89]{caffarelli89}
L.~A. Caffarelli.
\newblock Interior a priori estimates for solutions of fully nonlinear
  equations.
\newblock {\em Ann. of Math. (2)}, 130(1):189--213, 1989.

\bibitem[CC95]{caffarellic95}
L.\ Caffarelli and X.~Cabr{\'e}.
\newblock {\em Fully nonlinear elliptic equations}, volume~43 of {\em American
  Mathematical Society Colloquium Publications}.
\newblock American Mathematical Society, Providence, RI, 1995.

\bibitem[CS09]{caffarellis09}
L.\ Caffarelli and L.~Silvestre.
\newblock Regularity theory for fully nonlinear integro-differential equations.
\newblock {\em Comm. Pure Appl. Math.}, 62(5):597--638, 2009.

\bibitem[CTU20]{caffarellitu20}
L.\ Caffarelli, R.\ Teymurazyan, and J.~M. Urbano.
\newblock Fully nonlinear integro-differential equations with deforming
  kernels.
\newblock {\em Communications in Partial Differential Equations}, pages 1--25,
  2020.

\bibitem[GT01]{gilbargt01}
D.~Gilbarg and N.~S. Trudinger.
\newblock {\em Elliptic partial differential equations of second order}.
\newblock Classics in Mathematics. Springer-Verlag, Berlin, 2001.
\newblock Reprint of the 1998 edition.

\bibitem[KS79]{krylovs79}
N.~V. Krylov and M.~V. Safonov.
\newblock An estimate for the probability of a diffusion process hitting a set
  of positive measure.
\newblock {\em Dokl. Akad. Nauk SSSR}, 245(1):18--20, 1979.

\bibitem[KS80]{krylovs80}
N.~V. Krylov and M.~V. Safonov.
\newblock A property of the solutions of parabolic equations with measurable
  coefficients.
\newblock {\em Izv. Akad. Nauk SSSR Ser. Mat.}, 44(1):161--175, 239, 1980.

\bibitem[KT90]{kuot90}
H.~J. Kuo and N.~S. Trudinger.
\newblock Linear elliptic difference inequalities with random coefficients.
\newblock {\em Math. Comp.}, 55(191):37--53, 1990.

\bibitem[Lew20]{lewicka20}
M.~Lewicka.
\newblock {\em A Course on Tug-of-War Games with Random Noise}.
\newblock Universitext. Springer-Verlag, Berlin, 2020.
\newblock Introduction and Basic Constructions.

\bibitem[LP18]{luirop18}
H.~Luiro and M.~Parviainen.
\newblock Regularity for nonlinear stochastic games.
\newblock {\em Ann.\ Inst.\ H.\ Poincar{\'e} Anal.\ Non Lin{\'e}aire},
  35(6):1435--1456, 2018.

\bibitem[LPS13]{luirops13}
H.\ Luiro, M.\ Parviainen, and E.~Saksman.
\newblock Harnack's inequality for $p$-harmonic functions via stochastic games.
\newblock {\em Comm.\ Partial Differential Equations}, 38(11):1985--2003, 2013.

\bibitem[MPR12]{manfredipr12}
J.J. Manfredi, M.~Parviainen, and J.D. Rossi.
\newblock On the definition and properties of p-harmonious functions.
\newblock {\em Ann. Scuola Norm. Sup. Pisa Cl. Sci.}, 11(2):215--241, 2012.

\bibitem[PS08]{peress08}
Y.~Peres and S.~Sheffield.
\newblock Tug-of-war with noise: a game-theoretic view of the
  {$p$}-{L}aplacian.
\newblock {\em Duke Math. J.}, 145(1):91--120, 2008.

\bibitem[PSSW09]{peresssw09}
Y.~Peres, O.~Schramm, S.~Sheffield, and D.~B. Wilson.
\newblock Tug-of-war and the infinity {L}aplacian.
\newblock {\em J. Amer. Math. Soc.}, 22(1):167--210, 2009.


\bibitem[PT76]{puccit76}
C.~Pucci and G.~Talenti.
\newblock Elliptic (second-order) partial differential equations with
  measurable coefficients and approximating integral equations.
\newblock {\em Advances in Math.}, 19(1):48--105, 1976.

\bibitem[Tru80]{trudinger80}
N.~S. Trudinger.
\newblock Local estimates for subsolutions and supersolutions of general second
  order elliptic quasilinear equations.
\newblock {\em Invent. Math.}, 61(1):67--79, 1980.

\end{thebibliography}
\end{document}